\newcommand{\globalcolor}[1]{%
  \color{#1}\global\let\default@color\current@color
}
\definecolor{blush}{rgb}{0.87, 0.36, 0.51}
	\definecolor{brightcerulean}{rgb}{0.11, 0.67, 0.84}
	\definecolor{greenryb}{rgb}{0.4, 0.69, 0.2}
\newif\ifdark
\definecolor{darkred}{rgb}{0.9,0.2,0.2}
\definecolor{darkblue}{rgb}{0.7,0.3,1}
\definecolor{darkgreen}{rgb}{0.1,0.9,0.1}
\definecolor{franck}{rgb}{0,0.8,1}
\definecolor{pagebackground}{rgb}{.15,.21,.18}
\definecolor{pageforeground}{rgb}{.84,.84,.85}
\definecolor{symbols}{rgb}{0,0.7,1}
\colorlet{connection}{red!80!black}
\colorlet{boxcolor}{blue!50}
\definecolor{darkred}{rgb}{0.7,0.1,0.1}
\definecolor{darkblue}{rgb}{0.4,0.1,0.8}
\definecolor{darkgreen}{rgb}{0.1,0.7,0.1}
\definecolor{franck}{rgb}{0,0,1}
\definecolor{pagebackground}{rgb}{1,1,1}
\definecolor{pageforeground}{rgb}{0,0,0}
\colorlet{symbols}{blue!90!black}
\colorlet{connection}{red!30!black}
\colorlet{boxcolor}{blue!50!black}
\def\slash{\leavevmode\unskip\kern0.18em/\penalty\exhyphenpenalty\kern0.18em}
\def\dash{\leavevmode\unskip\kern0.18em--\penalty\exhyphenpenalty\kern0.18em}
\DeclareMathAlphabet{\mathbbm}{U}{bbm}{m}{n}
\DeclareFontFamily{U}{BOONDOX-calo}{\skewchar\font=45 }
\DeclareFontShape{U}{BOONDOX-calo}{m}{n}{
  <-> s*[1.05] BOONDOX-r-calo}{}
\DeclareFontShape{U}{BOONDOX-calo}{b}{n}{
  <-> s*[1.05] BOONDOX-b-calo}{}
\DeclareMathAlphabet{\mcb}{U}{BOONDOX-calo}{m}{n}
\SetMathAlphabet{\mcb}{bold}{U}{BOONDOX-calo}{b}{n}
\setlist{noitemsep,topsep=4pt,leftmargin=1.5em}
\DeclareMathAlphabet{\mathbbm}{U}{bbm}{m}{n}
\DeclareMathAlphabet{\mcb}{U}{BOONDOX-calo}{m}{n}
\SetMathAlphabet{\mcb}{bold}{U}{BOONDOX-calo}{b}{n}
\DeclareFontFamily{U}{mathx}{\hyphenchar\font45}
\DeclareFontShape{U}{mathx}{m}{n}{
      <5> <6> <7> <8> <9> <10>
      <10.95> <12> <14.4> <17.28> <20.74> <24.88>
      mathx10
      }{}
\DeclareSymbolFont{mathx}{U}{mathx}{m}{n}
\DeclareMathSymbol{\bigtimes}{1}{mathx}{"91}
\def\emptyset{{\centernot\ocircle}}
\newcommand{\z}{\mathsf{z}}
\newcommand{\rr}{\mathsf{r}}
\providecommand{\figures}{false}
{ \ifthenelse{\equal{\figures}{false}} {#1}{\[ {\rm Figure \ missing !} \]} }{}
\def\id{\mathrm{id}}
\def\proj{\mathbf{p}}
\tikzstyle{tinydots}=[dash pattern=on \pgflinewidth off \pgflinewidth]
\tikzstyle{superdense}=[dash pattern=on 4pt off 1pt]
\newcommand{\mcH}{\mathcal{H}}
\newcommand{\mcR}{\mathcal{R}}
\newcommand{\mcB}{\mathcal{B}}
\newcommand{\mcI}{\mathcal{I}}
\newcommand{\mcN}{\mathcal{N}}
\newcommand{\mcP}{\mathcal{P}}
\newcommand{\mcG}{\mathcal{G}}
\newcommand{\beq}{\begin{equation}}
\newcommand{\eeq}{\end{equation}}
\newcommand{\mfL}{\mathfrak{L}}
\newcommand{\mfl}{\mathfrak{l}}
\def\${|\!|\!|}
\def\proj{\mathfrak{p}}
\newenvironment{DIFnomarkup}{}{} 
\newcommand{\rmT}{{\rm T}}
\newfont{\indic}{bbmss12}
\def\Nabla_#1{\nabla_{\!#1}}
    \pgfmathsetlength{\pgf@xb}{\pgfkeysvalueof{/pgf/outer xsep}}%
    \pgfmathsetlength{\pgf@yb}{\pgfkeysvalueof{/pgf/outer ysep}}%
\def\symbol#1{\textcolor{symbols}{#1}}
\def\decorate#1#2{
        \ifnum#2>0
    		\foreach \count in {1,...,#2}{
	       	let
				\p1 = (sourcenode.center),
                \p2 = (sourcenode.east),
				\n1 = {\x2-\x1},
				\n2 = {1mm},
				\n3 = {(1.3+0.6*(\count-1))*\n1},
				\n4 = {0.7*\n1}
			in 
        		node[rectangle,fill=symbols,rotate=30,inner sep=0pt,minimum width=0.2*\n2,minimum height=\n2] at ($(sourcenode.center) + (\n3,\n4)$) {}
				}
		\fi
        \ifnum#1>0
    		\foreach \count in {1,...,#1}{
	       	let
				\p1 = (sourcenode.center),
                \p2 = (sourcenode.east),
				\n1 = {\x2-\x1},
				\n2 = {1mm},
				\n3 = {(1.3+0.6*(\count-1))*\n1},
				\n4 = {0.7*\n1}
			in 
        		node[rectangle,fill=symbols,rotate=-30,inner sep=0pt,minimum width=0.2*\n2,minimum height=\n2] at ($(sourcenode.center) + (-\n3,\n4)$) {}
				}
		\fi
}
\tikzset{
    dectriangle/.style 2 args={
        triangle,
        alias=sourcenode,
        append after command={\decorate{#1}{#2}}
    },
    dectriangle/.default={0}{0},
}
\tikzset{
	cross/.style={path picture={ 
  		\draw[symbols]
			(path picture bounding box.south east) -- (path picture bounding box.north west) (path picture bounding box.south west) -- (path picture bounding box.north east);
		}},
root/.style={circle,fill=green!50!black,inner sep=0pt, minimum size=1.2mm},
        dot/.style={circle,fill=pageforeground,inner sep=0pt, minimum size=1mm},
        dotred/.style={circle,fill=pageforeground!50!pagebackground,inner sep=0pt, minimum size=2mm},
        var/.style={circle,fill=pageforeground!10!pagebackground,draw=pageforeground,inner sep=0pt, minimum size=3mm},
        kernel/.style={semithick,shorten >=2pt,shorten <=2pt},
        kernels/.style={snake=zigzag,shorten >=2pt,shorten <=2pt,segment amplitude=1pt,segment length=4pt,line before snake=2pt,line after snake=5pt,},
        rho/.style={densely dashed,semithick,shorten >=2pt,shorten <=2pt},
           testfcn/.style={dotted,semithick,shorten >=2pt,shorten <=2pt},
        renorm/.style={shape=circle,fill=pagebackground,inner sep=1pt},
        labl/.style={shape=rectangle,fill=pagebackground,inner sep=1pt},
        xic/.style={very thin,circle,draw=symbols,fill=symbols,inner sep=0pt,minimum size=1.2mm},
        g/.style={very thin,rectangle,draw=symbols,fill=symbols!10!pagebackground,inner sep=0pt,minimum width=2.5mm,minimum height=1.2mm},
        xi/.style={very thin,circle,draw=symbols,fill=symbols!10!pagebackground,inner sep=0pt,minimum size=1.2mm},
	xies/.style={very thin,rectangle,fill=green!50!black!25,draw=symbols,inner sep=0pt,minimum size=1.1mm},
	xiesf/.style={very thin,rectangle,fill=green!50!black,draw=symbols,inner sep=0pt,minimum size=1.1mm},
        xix/.style={very thin,crosscircle,fill=symbols!10!pagebackground,draw=symbols,inner sep=0pt,minimum size=1.2mm},
        X/.style={very thin,cross,rectangle,fill=pagebackground,draw=symbols,inner sep=0pt,minimum size=1.2mm},
	xib/.style={thin,circle,fill=symbols!10!pagebackground,draw=symbols,inner sep=0pt,minimum size=1.6mm},
	xie/.style={thin,circle,fill=green!50!black,draw=symbols,inner sep=0pt,minimum size=1.6mm},
	xid/.style={thin,circle,fill=symbols,draw=symbols,inner sep=0pt,minimum size=1.6mm},
	xibx/.style={thin,crosscircle,fill=symbols!10!pagebackground,draw=symbols,inner sep=0pt,minimum size=1.6mm},
	kernels2/.style={very thick,draw=connection,segment length=12pt},
	keps/.style={thin,draw=symbols,->},
	kepspr/.style={thick,draw=connection,->},
	krho/.style={thin,draw=symbols,superdense,->},
	krhopr/.style={thick,draw=connection,superdense},
	triangle/.style = { regular polygon, regular polygon sides=3},
	not/.style={thin,circle,draw=connection,fill=connection,inner sep=0pt,minimum size=0.5mm},
	diff/.style = {very thin,draw=symbols,triangle,fill=red!50!black,inner sep=0pt,minimum size=1.6mm},
	diff1/.style = {very thin,dectriangle={1}{0},fill=red!50!black,draw=symbols,inner sep=0pt,minimum size=1.6mm},
	diff2/.style = {very thin,dectriangle={1}{1},fill=red!50!black,draw=symbols,inner sep=0pt,minimum size=1.6mm},
		diffmini/.style = {very thin,rectangle,fill=black,draw=black,inner sep=0pt,minimum size=0.75mm},
	 kernelsmod/.style={very thick,draw=connection,segment length=12pt},
	 rec/.style = {very thin,rectangle,fill=black,draw=black,inner sep=0pt,minimum size=2mm},
	cerc/.style={very thin,circle,draw=black,fill=symbols,inner sep=0pt,minimum size=2mm},
	stars/.style={very thin,star,star points=6,star point ratio=0.5, draw=black,fill=red,inner sep=0pt,minimum size=0.7mm},
	>=stealth,
        }
        \tikzset{
root/.style={circle,fill=black!50,inner sep=0pt, minimum size=3mm},
        circ/.style={circle,fill=white,draw=black,very thin,inner sep=.5pt, minimum size=1.2mm},
        round1/.style={fill=white,outer sep = 0,inner sep=2pt,rounded corners=1mm,draw,text=black,thin,minimum size=1.2mm},
          circ1/.style={circle,fill=red!10,draw=red,very thin,inner sep=.5pt, minimum size=1.2mm},
        rect/.style={fill=white,outer sep = 0,inner sep=2pt,rectangle,draw,text=black,thin,minimum size=1.2mm},
        rect1/.style={fill=white,outer sep = 0,inner sep=2pt,rectangle,draw,text=black,thin,minimum size=1.2mm},
        round2/.style={fill=red!10,outer sep = 0,inner sep=2pt,rounded corners=1mm,draw,text=black,thin,minimum size=1.2mm},
       round3/.style={fill=blue!10,outer sep = 0,inner sep=2pt,rounded corners=1mm,draw,text=black,thin,minimum size=1.2mm}, 
        rect2/.style={fill=black!10,outer sep = 0,inner sep=2pt,rectangle,draw,text=black,thin,minimum size=1.2mm},
        dot/.style={circle,fill=black,inner sep=0pt, minimum size=1.2mm},
        dotred/.style={circle,fill=black!50,inner sep=0pt, minimum size=2mm},
        var/.style={circle,fill=black!10,draw=black,inner sep=0pt, minimum size=3mm},
        kernel/.style={semithick,shorten >=2pt,shorten <=2pt},
         diag/.style={thin,shorten >=4pt,shorten <=4pt},
        kernel1/.style={thick},
        kernels/.style={snake=zigzag,shorten >=2pt,shorten <=2pt,segment amplitude=1pt,segment length=4pt,line before snake=2pt,line after snake=5pt,},
		kernels1/.style={snake=zigzag,segment amplitude=0.5pt,segment length=2pt},
		rho1/.style={densely dotted,semithick},
        rho/.style={densely dashed,semithick,shorten >=2pt,shorten <=2pt},
           testfcn/.style={dotted,semithick,shorten >=2pt,shorten <=2pt},
           visible/.style={draw, circle, fill, inner sep=0.25ex},
        renorm/.style={shape=circle,fill=white,inner sep=1pt},
        labl/.style={shape=rectangle,fill=white,inner sep=1pt},
        xic/.style={very thin,circle,fill=symbols,draw=black,inner sep=0pt,minimum size=1.2mm},
        xi/.style={very thin,circle,fill=blue!10,draw=black,inner sep=0pt,minimum size=1.2mm},
	xib/.style={very thin,circle,fill=blue!10,draw=black,inner sep=0pt,minimum size=1.6mm},
	xie/.style={very thin,circle,fill=green!50!black,draw=black,inner sep=0pt,minimum size=1mm},
	xid/.style={very thin,circle,fill=symbols,draw=black,inner sep=0pt,minimum size=1.6mm},
	edgetype/.style={very thin,circle,draw=black,inner sep=0pt,minimum size=5mm},
	nodetype/.style={very thick,circle,draw=black,inner sep=0pt,minimum size=5mm},
	kernels2/.style={very thick,draw=connection,segment length=12pt},
clean/.style={thin,circle,fill=black,inner sep=0pt,minimum size=1mm},	not/.style={thin,circle,fill=symbols,draw=connection,fill=connection,inner sep=0pt,minimum size=0.8mm},
	>=stealth,
        }
\def\DeclareSymbol#1#2#3{%
	\expandafter\gdef\csname MH@symb@#1\endcsname{\tikzsetnextfilename{symbol#1}%
	\tikz[baseline=#2,scale=0.15,draw=symbols,line join=round]{#3}}%
	\expandafter\gdef\csname MH@symb@#1s\endcsname{\scalebox{0.75}{\tikzsetnextfilename{symbol#1}%
	\tikz[baseline=#2,scale=0.15,draw=symbols,line join=round]{#3}}}%
	\expandafter\gdef\csname MH@symb@#1ss\endcsname{\scalebox{0.65}{\tikzsetnextfilename{symbol#1}%
	\tikz[baseline=#2,scale=0.15,draw=symbols,line join=round]{#3}}}%
	}
\def\<#1>{\ifthenelse{\boolean{mmode}}{\mathchoice{\csname MH@symb@#1\endcsname}{\csname MH@symb@#1\endcsname}{\csname MH@symb@#1s\endcsname}{\csname MH@symb@#1ss\endcsname}}{\csname MH@symb@#1\endcsname}}
 \def\1{\mathbf{\symbol{1}}}
\DeclareMathAlphabet{\mathpzc}{OT1}{pzc}{m}{it}
\def\eqref#1{(\ref{#1})}
\newcommand*{\bigcdot}{}
\DeclareRobustCommand*{\bigcdot}{%
  \mathbin{\mathpalette\bigcdot@{}}%
}
\newcommand*{\bigcdot@scalefactor}{.5}
\newcommand*{\bigcdot@widthfactor}{1.15}
\newcommand*{\bigcdot@}[2]{%
  \sbox0{$#1\vcenter{}$}
  \sbox2{$#1\cdot\m@th$}%
  \hbox to \bigcdot@widthfactor\wd2{%
    \hfil
    \raise\ht0\hbox{%
      \scalebox{\bigcdot@scalefactor}{%
        \lower\ht0\hbox{$#1\pprod\m@th$}%
      }%
    }%
    \hfil
  }%
}
\def\two{{\<generic>\kern0.05em\<genericb>}}
\def\twoI{{\<Ito>\kern0.05em\<Itob>}}
\def\mail#1{\burlalt{#1}{mailto:#1}}
\newcommand{\cop}{\textnormal{cop}\,}
\newcommand{\rmU}{{\rm U}}
\newcommand{\lnh}{(\hspace{-5pt}(\hspace{1pt}}
\newcommand{\rnh}{\hspace{1pt})\hspace{-5pt})\hspace{1pt}}
\newcommand{\lspan}{\textnormal{span}\,}
\newcommand{\populated}{\mathcal{T}} 
\newcommand{\length}[1]{\mathcal{l}({#1})}
\newcommand{\alphamin}{\underline{\alpha}}
\newcommand{\mbfX}{\mathbf{x}}
\newcommand{\lk}{{(\mathfrak{l},k)}}
\newcommand{\btl}{\blacktriangleright_{\mfl}}
\newcommand{\renlie}{\mathcal{R}}
\newcommand{\trir}{\triangleright}
\newcommand{\pprod}{\star}
\newcommand{\tripos}{D}
\newcommand{\trineg}{\blacktriangleright}
\newcommand{\gr}{\textnormal{deg}}
\newcommand{\grad}[1]{\gr({#1})}
\newcommand{\bff}{\mathbf{f}}
\newcommand{\bfg}{\mathbf{g}}
\newcommand{\renlietwo}{{\renlie_{\geq 2}}}
\newcommand{\Rentwo}{R_{\geq 2}}
\newcommand{\rhotil}{{\tilde{\rho}}}
\newcommand{\subL}{{\tilde{\mfL}}}
\newcommand{\mbfXbranched}{\mbfX^{\textnormal{Br}}}
\newcommand{\tmbfXbranched}{\tilde{\mbfX}^{\textnormal{Br}}}
\newcommand{\triins}{{\triangleright_{\rm ins}}}
\newcommand{\triinsl}{{\triangleright_{{\rm ins},\mfl}}}
\newcommand{\triinslp}{{\triangleright_{{\rm ins},\mfl'}}}
\newcommand{\indexset}{\mathscr{I}}
\newcommand{\indexsettwo}{\mathscr{J}}
\newcommand{\tricom}{{\triangleright_{\rm null}}}
\newcommand{\kappamin}{\underline{\kappa}}
\begin{document}

\title{Insertion pre-Lie products and translation of rough paths based on multi-indices}
\author{Pablo Linares}
\institute{ Imperial College London \\
Email:\ \begin{minipage}[t]{\linewidth}
\mail{p.linares-ballesteros@imperial.ac.uk}.
\end{minipage}}


\maketitle 

\begin{abstract}
	We use the diagram-free approach to regularity structures introduced by Otto et. al. to build rough paths based on multi-indices. We identify the analogue of the insertion pre-Lie algebra of trees and use it to build the corresponding group of translations of rough paths. We make this identification precise by showing that the natural dictionary between trees and multi-indices is a pre-Lie morphism under insertion, which in turn yields a Hopf algebra morphism to the rooted tree Hopf algebra equipped with the extraction-contraction coproduct.
\end{abstract}

\keywords{rough paths, multi-indices, Hopf algebras, pre-Lie algebras, extraction-contraction coproduct}
\setcounter{tocdepth}{2}
\setcounter{secnumdepth}{4}
\tableofcontents

\section{Introduction}
\label{Introduction}

In this work we implement the algebraic machinery of the diagram-free approach to regularity structures introduced in \cite{OSSW} and systematized in \cite{LOT,BL23} to rough differential equations (RDEs) of the form
\begin{align}\label{rde01}
	dY_t = \sum_{\mfl\in\mfL} a_\mfl(Y_t) dX^\mfl_t,\quad Y_0 = y_0\in\R,
\end{align}	
where $Y:[0,1] \to \R$, $\mfL$ is a finite set and, for each $\mfl\in \mfL$, $a_\mfl : \R \to \R$ is smooth and $X^\mfl : [0,1]\to \R$ is continuous. In applications, $\{X^\mfl\}_{\mfl\in\mfL}$ typically is the realization of a multi-dimensional stochastic process with almost surely continuous but non-differentiable trajectories. The pathwise analysis of such equations is possible as long as we postulate an interpretation of multiple integrals of $\{X^\mfl\}_{\mfl\in\mfL}$; this is the core of the theory of rough paths \cite{Lyons98, Gub04}. Assuming an algebraic integration by parts formula, it is enough to postulate only the \textit{iterated integrals} of $\{X^\mfl\}_{\mfl\in\mfL}$, in what have been called \textit{weakly geometric rough paths}. These can be interpreted as rough paths built upon the Hopf algebra of words over the alphabet $\mfL$; cf. e.~g. \cite[Subsection 1.1]{HK} for a concise exposition. Later, Gubinelli \cite{Gub06}, inspired by Butcher series \cite{Butcher72,CHV}, introduced the notion of \textit{branched rough paths}, i.~e. rough paths built upon the Hopf algebra of Connes-Kreimer \cite{CK1}, which now encodes all possible products of iterated integrals of $\{X^\mfl\}_{\mfl\in\mfL}$ without having to assume any integration by parts. Other examples of Hopf algebras have been explored, see \cite{Bel20,CEFMMK} to name only a few. 

\medskip

In affine spaces, vector fields carry a natural pre-Lie algebra structure (cf. e.~g. \cite{Manchon}): This pre-Lie algebra can be linked to the Connes-Kreimer Hopf algebra (and thus to branched rough paths) because Connes-Kreimer is dual to the universal envelope of the free pre-Lie algebra over the alphabet $\mfL$, also called Grossman-Larson Hopf algebra (cf. \cite{CL,Hoffman}). Therefore, pre-Lie algebras become a crucial structure in rough paths, 
as first observed in \cite{BCFP}, in order to connect certain transformations (translations) of the rough path with transformations of the equation via pre-Lie morphisms. Later, pre-Lie algebras were incorporated to the algebraic theory of regularity structures \cite{BCCH} with the same purpose, allowing to describe renormalized equations. More recently, pre-Lie algebras have been used for describing both recentering and algebraic renormalization of regularity structures in \cite{BM22}. Also \cite{LOT,BL23}, in the multi-index setup introduced in \cite{OSSW}, are based on pre-Lie algebras, but only make use of them in the space of solutions, in the form of a \textit{composition law} in the B-series jargon (cf. e.~g. \cite{CHV}): The current paper completes the picture, in the easier rough path context, providing the pre-Lie algebra associated to algebraic renormalization (\textit{substitution law}) in the multi-index approach.

\medskip

As a first goal, interpreting the theory of rough paths as a particular case of regularity structures \cite{reg,BHZ,CH16,BCCH}, in Section \ref{sec::3} we implement the program of \cite{BL23} to provide a notion of rough path based on multi-indices. In particular, we formulate the (smooth) rough path equations in terms of an exponential map (Lemma \ref{lem:3.5}), unveiling the algebraic structure required for Definition \ref{def:rp}. The rough path equations embed our approach in the construction of Hopf-algebraic smooth rough paths in \cite[Subsection 4.1]{BFPP}, although our focus is on the Lie-algebra side rather than the Lie-group side (see Remark \ref{rem:srp01} for the precise connection). In Section \ref{sec::4}, we introduce pre-Lie products for multi-indices which allow us to build the corresponding groups of translations (algebraic renormalization) in the sense of \cite{BCFP}. These pre-Lie products arise from the infinitesimal generators of a class of affine transformations of the equations, associated to introducing \textit{admissible counterterms} in the language of \cite[Subsection 3.4]{BL23}; note that \cite{BL23} does not build a group structure for algebraic renormalization in the associated regularity structures, since this would require extended decorations as noted in \cite{BHZ} (these are only needed in the presence of polynomials, and therefore do not play a role in the rough path case). Our group of transformations leads to a notion of translated rough paths, obtaining the analogue of \cite[Theorem 30]{BCFP} in our case, cf. Theorem \ref{th:trp} below. As a difference with the tree-based case, and the perspective described in \cite[Remark 4.13]{BFPP}, we highlight that our construction does not rely on having an underlying free structure: Even if we may identify our Lie algebra with the free Novikov algebra \cite[Section 7]{DL02}, cf. Lemma \ref{lem:nov01} below, the translation maps are built independently, and the pre-Lie morphism property \eqref{tra22} is shown a posteriori. Having the group of transformations is a sign that a multi-index approach with extended decorations is indeed conceivable to obtain the desired cointeraction property of \cite{BHZ} in the case of regularity structures (which we obtain for rough paths in its dual pre-Lie formulation as the generalized Leibniz rule \eqref{pre02} below). Finally, Section \ref{sec::5} connects multi-indices to trees, following \cite[Section 6]{LOT} and \cite[Subsection 2.5]{BL23}. In both cases, a ``dictionary" between trees and multi-indices is constructed, showing in addition that it is a pre-Lie morphism with respect to tree grafting (see \cite[Proposition 2.21]{BL23} for the formulation which is closer to our application here). We show that the same map is also a multi pre-Lie morphism with respect to a family of insertion pre-Lie products, which are a slight generalization of the one introduced in \cite[Section 4]{CEFM}; cf. Subsection \ref{subsec::5.2}. As a consequence, a Hopf algebra morphism with respect to the rooted tree Hopf algebra, equipped with the extraction-contraction coproduct, is established (cf. Corollary \ref{cor:hopf02}).  These results are of independent interest, as they could potentially provide information about the combinatorics of the insertion pre-Lie product.
\subsection*{Acknowledgements}

{\small
	The author sincerely thanks Yvain Bruned, Víctor Carmona, Kurusch Ebrahimi-Fard, Peter Friz and Dominique Manchon for helpful discussions, and Xue-Mei Li for financial support via the EPSRC grant EP/V026100/1.
}   
\section{Algebraic preliminaries}\label{sec::2}
In this section we collect some algebraic tools used in \cite[Sections 4 and 5]{LOT} and \cite[Section 3]{BL23}, formulated in enough generality to cover the situations considered in later sections. For completeness we include all the arguments, which will be well-known for experts on pre-Lie algebras, and which non-experts may skip at first read.
\subsection{The explicit Guin-Oudom procedure}\label{subsec::2.1}
Let $L$ be a vector space with a countable basis $\{a_i\}_{i\in\indexset}$, where $\indexset$ is a countable set. We endow $L$ with a pre-Lie product $\trir$.
\begin{definition} A pre-Lie product $\trir : L \times L \to L$ is a bilinear map such that for all $a,b,c\in L$
	\begin{align}\label{prelie}
		a\trir (b \trir c) - (a \trir b) \trir c = b \trir (a \trir c) - (b \trir a) \trir c.
	\end{align}
$(L,\trir)$ is called \textit{pre-Lie algebra}.
\end{definition}
This notion is stronger than that of Lie algebra, since one can always define a bracket
\begin{align*}
	[a,a']_\trir := a\trir a' - a'\trir a
\end{align*}
under which $(L,[\cdot,\cdot]_\trir)$ is a Lie algebra. See \cite{Manchon} for a survey on pre-Lie algebras.

\medskip

A pre-Lie product $\trir$ carries a canonical action of $L$ onto itself, which we can write as a map 
\begin{align}\label{go01}
	L\ni a \mapsto \rho_\trir(a) := a\trir \in \textnormal{End}(L).
\end{align}
This map is a Lie algebra morphism with respect to composition of endomorphisms.
\begin{lemma}
	\begin{align*}
		\rho_\trir ([a,a']_\trir) = \rho_\trir(a)\rho_\trir(a') - \rho_\trir(a') \rho_\trir (a).
	\end{align*}
\end{lemma}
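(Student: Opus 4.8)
The plan is to verify the identity by evaluating both sides of the claimed equality on an arbitrary element $c\in L$ and reducing to the defining pre-Lie relation \eqref{prelie}. Since $\rho_\trir$ is defined pointwise through left multiplication by $\trir$, everything comes down to manipulating associators.

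\textbf{Step 1: Expand the left-hand side.} Using bilinearity of $\trir$ and the definition of the bracket,
\begin{align*}
	\rho_\trir([a,a']_\trir)(c) = [a,a']_\trir \trir c = (a\trir a')\trir c - (a'\trir a)\trir c.
\end{align*}

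\textbf{Step 2: Expand the right-hand side.} By definition of $\rho_\trir$ as $a\mapsto a\trir$ and of composition in $\textnormal{End}(L)$,
\begin{align*}
	\bigl(\rho_\trir(a)\rho_\trir(a') - \rho_\trir(a')\rho_\trir(a)\bigr)(c) = a\trir(a'\trir c) - a'\trir(a\trir c).
\end{align*}

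\textbf{Step 3: Match the two via the pre-Lie axiom.} Subtracting, the claim is equivalent to
\begin{align*}
	a\trir(a'\trir c) - (a\trir a')\trir c = a'\trir(a\trir c) - (a'\trir a)\trir c,
\end{align*}
which is precisely \eqref{prelie} applied with $b$ replaced by $a'$ (the associator of $(a,a',c)$ is symmetric in the first two arguments). Hence the identity holds for every $c\in L$, and since $c$ was arbitrary the two endomorphisms coincide, proving the lemma.

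There is no genuine obstacle here: the only ``step'' is recognizing that the defining identity of a pre-Lie product is exactly the symmetry of the associator in its first two slots, which is what makes $\rho_\trir$ a Lie algebra morphism. The argument is purely formal and uses only bilinearity of $\trir$, the definition of $[\cdot,\cdot]_\trir$, and \eqref{prelie}; no properties of the basis $\{a_i\}_{i\in\indexset}$ or of $\textnormal{End}(L)$ beyond associativity of composition are needed.
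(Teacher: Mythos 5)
Your proof is correct and follows essentially the same route as the paper: evaluate both sides on an arbitrary element, expand via the definition of $\rho_\trir$ and the bracket, and observe that the resulting identity is exactly the pre-Lie axiom \eqref{prelie} (symmetry of the associator in its first two arguments). No issues.
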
	
\begin{proof}
	Take $b\in L$. Using the definition \eqref{go01} and the pre-Lie identity \eqref{prelie},
	\begin{align*}
		\rho_\trir ([a,a']_\trir) b &= \rho_\trir (a\trir a')b - \rho_\trir(a'\trir a) b\\
		&= (a\trir a')\trir b - (a'\trir a)\trir b\\
		&= a\trir (a'\trir b) - a'\trir (a\trir b)\\
		&= \rho_\trir(a)\rho_\trir(a') b - \rho_\trir(a') \rho_\trir(a) b.
	\end{align*}
\end{proof}	
We may write the structure constants of $(L,\trir)$ with respect to the basis $\{a_i\}_{i\in\indexset}$ in terms of $\rho_\trir$, namely\footnote{ The placement of the indices seems unnatural at this stage, but will become clearer later when we identify the basis with a set of monomials (i.~e. with the indices up).}
\begin{align}\label{go18}
	a_i \trir a_j = \sum_{k\in\indexset} \big(\rho_\trir(a_i)\big)_k^j a_k.
\end{align}

\medskip

Guin and Oudom \cite{GO1,GO2} established that the universal envelope of a pre-Lie algebra is isomorphic to its symmetric algebra endowed with a non-commutative product built from the corresponding pre-Lie product. What follows is a reproduction of their proof up to a small twist in line with the strategy of \cite[Section 4]{LOT}: Instead of constructing a non-commutative product (Grossman-Larson) in the symmetric algebra, we build a symmetric basis  (which we will call Guin-Oudom basis, cf. Lemma \ref{lem:go01} below) of the universal enveloping algebra. For this we will not need the full Guin-Oudom construction of the product, but only ``half" of it, namely a right symmetric action of $L$ on its universal envelope.

\medskip

Denote by $\rmU (L,\trir)$ the universal enveloping algebra of $(L,[\cdot,\cdot]_\trir)$, i.~e. the tensor algebra over $L$ quotiented by the ideal generated by the bracket $[\cdot,\cdot]_\trir$, cf. \cite[p.28]{Abe}.  $\rmU(L,\trir)$ is a Hopf algebra (\cite[Examples 2.5 and 2.8]{Abe}), with the product given by concatenation and the coproduct given by the algebra morphism extension of the map
\begin{align}\label{go16}
	a \mapsto a\otimes 1 + 1 \otimes a.
\end{align}
By the universality property \cite[p. 29]{Abe}, $\rho_\trir$ uniquely extends to an algebra morphism $\rho_\trir:\rmU (L,\trir)\to \textnormal{End}(L)$, i.~e. for $U,U'\in \rmU(L,\trir)$
\begin{align*}
	\rho_\trir (U U') = \rho_\trir (U) \rho_\trir (U').
\end{align*}
As mentioned above, our goal is to build a symmetric basis in $\rmU(L,\trir)$. Recall that, by the Poincaré-Birkhoff-Witt theorem \cite[Theorem 1.9.6]{HGK}, given a total order $\leq$ in $\indexset$, the set of ordered concatenated elements $\{a_{i_1}\cdots a_{i_N}\,|\, N\in \N_0,\, i_1\leq...\leq i_N\}$ is a basis, but it crucially depends on the ordering. We will obtain a symmetric basis by means of constructing a right symmetric action of $L$ on $\rmU (L,\trir)$.  We start considering $\pprod_\trir:L\times L \to \rmU(L,\trir)$ defined as
\begin{align}\label{go02}
	a\pprod_\trir a' := aa' - a\trir a'.
\end{align}
Note at this stage that \eqref{go02} is commutative, since
\begin{align}\label{go03}
	a\pprod_\trir a' - a' \pprod_\trir a = aa' - a'a - [a,a']_\trir,
\end{align}
which is $0$ in $\rmU(L,\trir)$. We extend this operation to $\pprod_\trir:\rmU(L,\trir)\times L \to \rmU(L,\trir)$ inductively: For $a,a'\in L$ and $U\in\rmU(L,\trir)$,
\begin{align}\label{go06}
	\left\{\begin{array}{l}
		1\pprod_\trir a = a\\
		a'U \pprod_\trir a = a'(U \pprod_\trir  a) - U \pprod_\trir (a'\trir a),
	\end{array}\right.
\end{align}	
cf. \cite[Proposition 2.7 (i) and (ii)]{GO2}. We have the following properties.
\begin{lemma}\label{lem:go01}
	\mbox{}
	\begin{enumerate}[label=(\roman*)]
		\item Commutativity: For all $a,a'\in L$ and $U\in \rmU(L,\trir)$,
		\begin{align}\label{go04}
			\big(U \pprod_\trir a\big)\pprod_\trir a'  = \big(U \pprod_\trir a'\big) \pprod_\trir a.
		\end{align}
		\item Coalgebra morphism: For all $a\in L$ and $U\in \rmU(L,\trir)$,
		\begin{align}\label{go05}
			\cop U \pprod_\trir a = \sum_{(U)} U_{(1)} \pprod_\trir a \otimes U_{(2)} + U_{(1)}\otimes U_{(2)} \pprod_\trir a,
		\end{align}
		where we used Sweedler's notation for the coproduct
		\begin{equation*}
			\cop U = \sum_{(U)} U_{(1)}\otimes U_{(2)}.
		\end{equation*}
		\item Generalized Leibniz rule: With the same notation, for all $a\in L$ and $U\in \rmU(L,\trir)$,
		\begin{align}\label{go07}
			Ua = \sum_{(U)} U_{(2)}\pprod_\trir (\rho_\trir(U_{(1)})a).
		\end{align}
	\end{enumerate}
\end{lemma}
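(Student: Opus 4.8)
The plan is to prove all three identities by one and the same induction, reproducing the argument of \cite[Proposition 2.7]{GO2}. I would induct on $N\in\N_0$, proving that each of \eqref{go04}, \eqref{go05}, \eqref{go07} holds for every $U$ in the linear span of products of at most $N$ elements of $L$; by linearity it then suffices to treat the base case $U=1$ and the inductive step $U=a'V$ with $a'\in L$ and $V$ a product of $N-1$ elements of $L$, in which one commutes the leftmost factor $a'$ past $\pprod_\trir a$ using \eqref{go06}. The device that makes the recursion usable --- since $\pprod_\trir$ does \emph{not} lower the length --- is the rewriting of \eqref{go06} as
\begin{equation}\label{eq:straighten}
	a'(V\pprod_\trir b)=(a'V)\pprod_\trir b+V\pprod_\trir(a'\trir b),\qquad a',b\in L,\ V\in\rmU(L,\trir),
\end{equation}
which reorganizes every term arising in the inductive step so that the induction hypothesis is applied only to $V$ (and to elements of $L$), never to $V\pprod_\trir b$.

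\textbf{(i).} I would put $R_a\in\textnormal{End}(\rmU(L,\trir))$, $R_a(U):=U\pprod_\trir a$, so that \eqref{go04} reads $[R_a,R_{a'}]=0$, and strengthen the induction hypothesis to: $[R_b,R_{b'}]$ annihilates all products of at most $N$ elements of $L$, for every $b,b'\in L$. The base case is $[R_a,R_{a'}](1)=a'\pprod_\trir a-a\pprod_\trir a'=0$ by \eqref{go03}. For $U=a''V$, two uses of \eqref{go06} give $R_aR_{a'}(a''V)=a''R_aR_{a'}(V)-R_{a''\trir a}R_{a'}(V)-R_aR_{a''\trir a'}(V)$, and subtracting the same expression with $a,a'$ interchanged yields
\begin{equation*}
	[R_a,R_{a'}](a''V)=a''\,[R_a,R_{a'}](V)+[R_{a''\trir a'},R_a](V)+[R_{a'},R_{a''\trir a}](V),
\end{equation*}
in which every bracket acts on the shorter element $V$ and hence vanishes by the strengthened hypothesis.

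\textbf{(ii).} The base case is immediate from $\cop 1=1\otimes 1$ and $1\pprod_\trir a=a$. For $U=a'V$, multiplicativity of $\cop$ gives $\cop U=\sum_{(V)}a'V_{(1)}\otimes V_{(2)}+V_{(1)}\otimes a'V_{(2)}$, while \eqref{go06} and $\cop$ being an algebra morphism give $\cop(U\pprod_\trir a)=(a'\otimes 1+1\otimes a')\,\cop(V\pprod_\trir a)-\cop\big(V\pprod_\trir(a'\trir a)\big)$. Substituting the induction hypothesis \eqref{go05} for $V$ (once with $a$, once with $a'\trir a$), expanding, and applying \eqref{eq:straighten} to the two terms of the form $a'(V_{(i)}\pprod_\trir a)$, the four summands containing $V\pprod_\trir(a'\trir a)$ cancel in pairs, leaving exactly $\sum_{(U)}(U_{(1)}\pprod_\trir a)\otimes U_{(2)}+U_{(1)}\otimes(U_{(2)}\pprod_\trir a)$ for the description of $\cop U$ just recorded.

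\textbf{(iii), and the main obstacle.} The base case uses $\rho_\trir(1)=\mathrm{id}_L$ (as $\rho_\trir$ is a unital algebra morphism) and $1\pprod_\trir a=a$. For $U=a'V$, associativity of $\rmU(L,\trir)$ gives $Ua=a'(Va)$; inserting the induction hypothesis \eqref{go07} for $V$ and then \eqref{eq:straighten} in each $a'\big(V_{(2)}\pprod_\trir\rho_\trir(V_{(1)})a\big)$ would give
\begin{equation*}
	Ua=\sum_{(V)}(a'V_{(2)})\pprod_\trir\big(\rho_\trir(V_{(1)})a\big)+V_{(2)}\pprod_\trir\big(a'\trir\rho_\trir(V_{(1)})a\big);
\end{equation*}
since $\rho_\trir$ is an algebra morphism and $\rho_\trir(a')b=a'\trir b$, the second summand equals $V_{(2)}\pprod_\trir\big(\rho_\trir(a'V_{(1)})a\big)$, so the right-hand side is precisely $\sum_{(U)}U_{(2)}\pprod_\trir(\rho_\trir(U_{(1)})a)$ for $\cop U=\sum_{(V)}a'V_{(1)}\otimes V_{(2)}+V_{(1)}\otimes a'V_{(2)}$. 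I expect no conceptual obstacle: none of the three computations uses the pre-Lie identity \eqref{prelie} directly, which enters only through the already established well-definedness of $\pprod_\trir$ on $\rmU(L,\trir)$ and of the algebra morphism $\rho_\trir$. The only real care is the Sweedler-notation bookkeeping in (ii)--(iii) and, above all, arranging --- through \eqref{eq:straighten} in (ii)--(iii) and the commutator reorganization in (i) --- that the induction hypothesis is invoked at length $N-1$ and not at length $N$.
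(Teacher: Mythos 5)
Your proposal is correct and follows essentially the same route as the paper: a single induction on the length of $U$ (base case $U=1$, step $U=a'V$), with the recursion \eqref{go06} --- your identity \eqref{eq:straighten} is just its rearrangement --- used to push the leftmost factor inside so that the induction hypothesis is only ever invoked on the shorter element $V$. The only cosmetic difference is that in (i) you package the computation in terms of commutators of the operators $R_a$ and state the strengthened hypothesis (all pairs $b,b'\in L$) explicitly, whereas the paper uses the same strengthening implicitly; the cancellations in (ii) and the use of the algebra morphism property of $\rho_\trir$ in (iii) match the paper's proof step for step.
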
	
\begin{proof}
	For all three identities, it is enough to show them for $U=1$ and, assuming it true for a generic $U$, to prove them for $bU$ with $b\in L$.
	\begin{enumerate}[label=(\roman*)]
		\item The case $U=1$ is \eqref{go03}. Moreover,
		\begin{align*}
			&(bU \pprod_\trir a)\pprod_\trir a'\\
			&\quad= \big(b (U\pprod_\trir a)\big)\trir a' - \big(U \pprod_\trir (b\trir a)\big)\pprod_\trir a' \\
			&\quad= b\big((U\pprod_\trir a) \pprod_\trir a'\big) - (U\pprod_\trir a) \pprod_\trir (b\trir a') -\big(U \pprod_\trir (b\trir a)\big)\pprod_\trir a'.
		\end{align*}
		Assuming \eqref{go04} holds for $U$, this is equal to
		\begin{align*}
			&b\big((U\pprod_\trir a') \pprod_\trir a\big) -\big(U \pprod_\trir (b\trir a')\big)\pprod_\trir a
			- (U\pprod_\trir a') \pprod_\trir (b\trir a)\\
			&\quad = (bU\pprod_\trir a')\pprod_\trir a.
		\end{align*}
		\item The case $U=1$ is \eqref{go16}. Moreover, assuming \eqref{go05} holds for $U$,
		\begin{align*}
			\cop bU \pprod_\trir a &= \cop b(U\pprod_\trir a) - \cop U \pprod_\trir (b\trir a)\\
			 &= \sum_{(U)} b(U_{(1)} \pprod_\trir a)\otimes U_{(2)} + b U_{(1)} \otimes U_{(2)} \pprod_\trir a \\
			 &\quad\quad+ U_{(1)}\pprod_\trir a \otimes b U_{(2)} + U_{(1)}\otimes b(U_{(2)}\pprod_\trir a) \\
			 &\quad \quad- U_{(1)}\pprod_\trir (b\trir a)\otimes U_{(2)} - U_{(1)}\otimes U_{(2)}\pprod_\trir(b\trir a).
		\end{align*}
		Rearranging terms via \eqref{go06}, and using
		\begin{align}\label{go08}
			\cop bU = \sum_{(U)} bU_{(1)}\otimes U_{(2)} + U_{(1)}\otimes b U_{(2)}, 
		\end{align}
		we obtain the desired \eqref{go05}.
		\item The case $U=1$ is trivial since $\rho_\trir (1) = \id$. In addition, assuming \eqref{go07} holds for $U$ and using \eqref{go06},
		\begin{align*}
			bUa &= b \sum_{(U)} U_{(2)}\pprod_\trir (\rho_\trir(U_{(1)})a) \\
			&= \sum_{(U)} (bU_{(2)})\pprod_\trir \big(\rho_\trir(U_{(1)})a\big) + U_{(2)} \pprod_\trir\big(b\trir \rho_\trir(U_{(1)})a\big) \\
			&= \sum_{(U)} (bU_{(2)})\pprod_\trir \big(\rho_\trir(U_{(1)})a\big) + U_{(2)} \pprod_\trir\big(\rho_\trir(b) \rho_\trir(U_{(1)})a\big);
		\end{align*}
		the claim follows by the algebra morphism property of $\rho_\trir$ and \eqref{go08}.	
	\end{enumerate}
\end{proof}	
\begin{remark}\label{rem:glp}
	Equation \eqref{go07} is a particular case of \cite[Definition 2.9]{GO2}, which connects the concatenation product in $\rmU (L,\triangleright)$ with the Grossman-Larson product in $\mathrm{S}(L)$.
\end{remark}	

\medskip

We now introduce some notation. A multi-index over $\indexset$ is a function $I:\indexset \to \N_0$ such that $I(i) = 0$ for all but finitely many $i\in \indexset$. We denote by $M(\indexset)$ the set of multi-indices over $\indexset$. For every $i\in \indexset$, $e_i\in M(\indexset)$ denotes the length-one multi-index $e_i (i') = \delta_i^{i'}$. We sometimes write multi-indices in the following way:
\begin{equation}\label{multi01}
	I = \sum_{i\in\indexset} I(i) e_i = \sum_{k=1}^{\length{I}} e_{i_k},
\end{equation}
where
\begin{equation*}
	\length{I} := \sum_{i\in\indexset} I(i)
\end{equation*}
is the length of $I$. The second sum allows for repeated indices (i.~e. $i_k = i_{k'}$ for $k\neq k'$ is allowed), while the first one does not. For a multi-index of the form \eqref{multi01} we define the multi-index factorial 
\begin{equation*}
	I! := \prod_{i\in\indexset} I(i)!
\end{equation*}
For $I$ as in \eqref{multi01} we define the element $A_I \in \rmU(L,\triangleright)$ as
\begin{align}\label{go30}
	A_I := \frac{1}{I!} \big( \cdots \big(\big( 1 \pprod_\trir a_{i_1}\big) \cdots \big) \pprod_\trir a_{i_{\length{I}}} \big),
\end{align}
i.~e. the iterative application of $\pprod_\trir a_{i_k}$ to $1\in \rmU(L,\trir)$. The right symmetry property \eqref{go04} implies that \eqref{go30} does not depend on the order of application, so the multi-index notation is meaningful. In the sequel, we will omit the brackets and the $1$ and simply write $a_{i_1} \pprod_\trir\cdots \pprod_\trir a_{i_k}$.
\begin{lemma}\label{lem:go02}
	The set $\{A_I\}_{I\in M(\indexset)}$ is a basis of $\rmU(L,\trir)$. We call it the \textbf{Guin-Oudom basis} of $\rmU(L,\trir)$ over the basis $\{a_i\}_{i\in\indexset}$.
\end{lemma}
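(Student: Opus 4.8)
The plan is to show that the linear span of $\{A_I\}_{I \in M(\indexset)}$ exhausts $\rmU(L,\trir)$ and that these elements are linearly independent, by comparing them against the Poincaré-Birkhoff-Witt basis. First I would fix a total order $\leq$ on $\indexset$ and recall from PBW that the ordered monomials $a_{i_1}\cdots a_{i_N}$ with $i_1 \leq \cdots \leq i_N$ form a basis of $\rmU(L,\trir)$. The key observation is that the defining recursion \eqref{go06} expresses $\pprod_\trir$ as the concatenation product corrected by lower-order terms: expanding $A_I$ according to \eqref{go30} and \eqref{go02}, one sees that
\begin{align*}
	A_I = \frac{1}{I!}\, a_{i_1}\cdots a_{i_{\length{I}}} + (\text{terms of concatenation-degree} < \length{I}),
\end{align*}
since every application of $\trir$ in \eqref{go02} and \eqref{go06} produces a single element of $L$ in place of two, strictly lowering the concatenation length. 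Here I should be a little careful: $\length{I}$ is the filtration degree, and the correction terms live in the span of products $a_{j_1}\cdots a_{j_m}$ with $m < \length{I}$.

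Second I would set up the filtration argument properly. Let $\rmU_{\leq N}$ denote the span of products of at most $N$ elements of $L$; this is the standard PBW filtration, and the associated graded is the symmetric algebra $\mathrm{S}(L)$. From the previous paragraph, $A_I \in \rmU_{\leq \length{I}}$ and its image in $\rmU_{\leq \length{I}}/\rmU_{\leq \length{I}-1} \cong \mathrm{S}^{\length{I}}(L)$ is $\tfrac{1}{I!}$ times the symmetric monomial $\prod_{i} a_i^{I(i)}$. Since $\{\tfrac{1}{I!}\prod_i a_i^{I(i)}\}_{\length{I}=N}$ is (up to nonzero scalars) the standard monomial basis of $\mathrm{S}^N(L)$, the family $\{A_I\}_{\length{I}=N}$ maps to a basis of the $N$-th graded piece. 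A standard induction on $N$ — or equivalently the observation that a filtered family whose symbols form a basis of the associated graded is itself a basis — then shows $\{A_I\}_{I\in M(\indexset)}$ is a basis of $\rmU(L,\trir)$.

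The main obstacle, and the only step requiring genuine care, is justifying rigorously that the correction terms in the expansion of $A_I$ genuinely have concatenation-degree strictly less than $\length{I}$, uniformly through the iterated application. This is really an induction on $\length{I}$ using the recursion \eqref{go06}: if $a_{i_1}\pprod_\trir\cdots\pprod_\trir a_{i_{k}} = a_{i_1}\cdots a_{i_k} + R$ with $R \in \rmU_{\leq k-1}$, then applying $\pprod_\trir a_{i_{k+1}}$ via \eqref{go06} produces $a_{i_1}\cdots a_{i_{k+1}}$ from the leading term, plus: (a) $-\, (a_{i_1}\cdots a_{i_k})\trir a_{i_{k+1}} \in L \subseteq \rmU_{\leq 1}$... wait, more carefully, \eqref{go06} unwinds the concatenation one letter at a time, and each branch either keeps concatenation length fixed while shortening the tail, or replaces a pair by a single $\trir$-product; tracking this shows the remainder stays in $\rmU_{\leq k}$ with the degree-$(k+1)$ part being exactly $a_{i_1}\cdots a_{i_{k+1}}$. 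The commutativity \eqref{go04} guarantees the multi-index labelling of $A_I$ is well-defined, so no ordering ambiguity arises. Once the leading-term computation is in hand, the linear algebra is routine.
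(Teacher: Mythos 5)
Your proposal is correct and follows essentially the same route as the paper: the paper's proof also fixes an arbitrary order on $\indexset$, expands each $A_I$ in the Poincaré--Birkhoff--Witt basis via iterated application of \eqref{go06}, and invokes strict triangularity with respect to $\length{\cdot}$ to conclude invertibility. Your version merely spells out the leading-term computation and the passage to the associated graded, which the paper leaves implicit.
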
	
\begin{proof}
	Give an arbitrary ordering to $\indexset$, and consider its associated Poincaré-Birkhoff-Witt basis. We write every $A_I$ using this basis applying \eqref{go06} iteratively:  This representation is strictly triangular with respect to $\length{\cdot}$, and thus invertible.
\end{proof}	
We now study some properties of the representation of the product and the coproduct of $\rmU (L,\trir)$ with respect to the Guin-Oudom basis.
\begin{lemma}\label{lem:2.6}
	\mbox{}
	\begin{enumerate}[label=(\roman*)]
		\item For every $I\in M(\indexset)$,
		\begin{align}\label{go11}
			\cop A_I = \sum_{I'+I''=I} A_{I'}\otimes A_{I''}.
		\end{align}
		\item Let $\proj_L : \rmU(L,\trir)\to L$ be the canonical projection given by the basis\footnote{ i.~e. the projection $\rmU(L,\trir)\to\lspan_{i\in \indexset} \{A_{e_i}\}$ composed with the natural $A_{e_i}\mapsto a_i$.}, and let $\epsilon : \rmU(L,\trir)$ $\to$ $\R$ be the counit\footnote{ i.~e. $\epsilon (A_I) = \delta_I^0$.} of $\rmU(L,\trir)$. Then for all $U,U'\in \rmU(L,\trir)$
		\begin{align}\label{go12}
			\proj_L (U U') = \proj_L (U) \epsilon(U') + \rho_\trir(U)\proj_L (U').
		\end{align}
	\end{enumerate}
\end{lemma}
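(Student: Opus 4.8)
The plan is to prove both parts by induction, using Lemma~\ref{lem:go01} as the inductive engine. A computation with the multi-index factorials in~\eqref{go30} is used throughout: for any $I\in M(\indexset)$ and $i\in\indexset$, comparing $(I+e_i)! = (I(i)+1)\,I!$ gives
\begin{align*}
	A_I\pprod_\trir a_i = (I(i)+1)\,A_{I+e_i},
\end{align*}
so applying $\pprod_\trir a_i$ to a basis vector returns a scalar multiple of a single basis vector. Since $A_{I+e_i}$ has length $\length{I}+1$, applying $\proj_L$ to the displayed identity (which vanishes unless $I=0$) and using bilinearity yields the auxiliary identity $\proj_L(V\pprod_\trir b) = \epsilon(V)\,b$ for all $V\in\rmU(L,\trir)$ and $b\in L$.

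For (i) I would induct on $\length{I}$, the case $I=0$ being $\cop 1 = 1\otimes 1$. For $\length{I}=n+1$, pick $i$ with $I(i)\ge 1$ and set $J:=I-e_i$, so that $I(i)\,A_I = A_J\pprod_\trir a_i$. Applying the coalgebra identity~\eqref{go05} with $U=A_J$, $a=a_i$, substituting the inductive hypothesis $\cop A_J = \sum_{J'+J''=J}A_{J'}\otimes A_{J''}$, and rewriting $A_{J'}\pprod_\trir a_i = (J'(i)+1)A_{J'+e_i}$ (and symmetrically on the second leg), one obtains
\begin{align*}
	I(i)\,\cop A_I = \sum_{J'+J''=J}\Big((J'(i)+1)\,A_{J'+e_i}\otimes A_{J''} + (J''(i)+1)\,A_{J'}\otimes A_{J''+e_i}\Big).
\end{align*}
It remains to reindex: a fixed pair $(I',I'')$ with $I'+I''=I$ is hit by the first sum (via $J'=I'-e_i$) only when $I'(i)\ge 1$, contributing the coefficient $I'(i)$ to $A_{I'}\otimes A_{I''}$, and by the second sum only when $I''(i)\ge 1$, contributing $I''(i)$; since $I'(i)+I''(i)=I(i)$, the total coefficient is $I(i)$, and dividing gives~\eqref{go11}.

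For (ii) both sides are bilinear in $(U,U')$ and $\rmU(L,\trir)$ is generated under concatenation by $L$, so it suffices to take $U'$ a product of $m$ elements of $L$ and induct on $m$. For $m=0$ ($U'=1$) the claim is $\proj_L(U)=\proj_L(U)$. For $m=1$, i.e. $U'=a\in L$, the generalized Leibniz rule~\eqref{go07} gives $Ua = \sum_{(U)}U_{(2)}\pprod_\trir\big(\rho_\trir(U_{(1)})a\big)$; applying $\proj_L$, using the auxiliary identity $\proj_L(V\pprod_\trir b)=\epsilon(V)b$, and collapsing with the counit axiom $\sum_{(U)}\epsilon(U_{(2)})U_{(1)}=U$ yields $\proj_L(Ua)=\rho_\trir(U)a$, which is~\eqref{go12} since $\epsilon(a)=0$. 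For $m\ge 2$ write $U'=aV$ with $a\in L$ and $V$ a product of $m-1$ factors; by associativity $\proj_L(UU')=\proj_L\big((Ua)V\big)$, and the induction hypothesis for $(Ua,V)$ together with the $m=1$ case ($\proj_L(Ua)=\rho_\trir(U)a$), the algebra-morphism property $\rho_\trir(Ua)=\rho_\trir(U)\rho_\trir(a)$, and $\epsilon(V)=0$ reduce $\proj_L(UU')$ to $\rho_\trir(U)\rho_\trir(a)\proj_L(V)$; a further application of the hypothesis to $(a,V)$ identifies $\rho_\trir(a)\proj_L(V)$ with $\proj_L(aV)=\proj_L(U')$, and since $\epsilon(U')=0$ this is the right-hand side of~\eqref{go12}.

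The conceptual content is light: (ii) is a formal consequence of~\eqref{go07} and the auxiliary projection formula, and (i) of the coderivation property~\eqref{go05}. The one point that requires care is the reindexing at the end of the inductive step for (i): one must check that the numerical coefficients produced by the multi-index factorials in~\eqref{go30} recombine to exactly cancel the factor $I(i)$, which is precisely what makes $\{A_I\}_{I\in M(\indexset)}$ a divided-power–type basis for the coproduct.
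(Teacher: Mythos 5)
Your proof is correct. For part (i) you carry out explicitly the reindexing that the paper delegates to the summation lemma of \cite{LOT}: the key point, which you identify, is that the divided-power normalization in \eqref{go30} makes the coefficients $I'(i)+I''(i)=I(i)$ recombine exactly, and your verification of this is sound. For part (ii) your route differs from the paper's in the choice of decomposition. The paper fixes two Guin--Oudom basis elements $A_{I_1},A_{I_2}$ and inducts on $\length{I_2}$, writing a general basis element of positive length as $A_{I_2}\pprod_\trir a_i$ and then organizing a cancellation between $\rho_\trir(A_{I_1}A_{I_2})a_i$ and $\rho_\trir(A_{I_1})\rho_\trir(A_{I_2})a_i$ via \eqref{go07}, \eqref{go11} and the algebra morphism property. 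You instead decompose $U'$ as a concatenation word in $L$ and peel off one primitive letter at a time, so that \eqref{go07} is only ever invoked in the single-letter case $\proj_L(Ua)=\rho_\trir(U)a$, with the rest following from associativity, $\epsilon(V)=0$ for nonempty words, and the algebra morphism property of $\rho_\trir$. Both arguments rest on the same two ingredients --- the generalized Leibniz rule \eqref{go07} and the observation (your auxiliary identity $\proj_L(V\pprod_\trir b)=\epsilon(V)\,b$, which the paper uses implicitly when it says the correction term ``reduces to'' $\rho_\trir(A_{I_1})\rho_\trir(A_{I_2})a_i$) --- but your organization is somewhat more elementary, since the word-length induction avoids tracking the full coproduct expansion of $A_{I_1}A_{I_2}$ and the attendant cancellation; the price is the preliminary reduction to concatenation words, which is harmless because both sides of \eqref{go12} are bilinear and such words span $\rmU(L,\trir)$.
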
	
\begin{proof}
	Equation \eqref{go11} is a straightforward consequence of \eqref{go05} via the summation lemma \cite[Lemma A.1]{LOT}. For \eqref{go12}, it is enough to show it for two basis elements $A_{I_1},A_{I_2}$. We argue by induction in the length of $I_2$. If $I_2=0$, \eqref{go12} is trivial since $\proj_L(1)=0$ and $\epsilon(1)=1$. We now assume it for all multi-indices $I'$ of length smaller or equal than that of $I_2$ and give ourselves a basis element $a_i$ of $L$. We use \eqref{go07} in combination with \eqref{go11} to obtain
	\begin{align}
		&\proj_L (A_{I_1} (A_{I_2}\pprod_\trir a_i))\nonumber \\
		&\quad = \proj_L(A_{I_1} A_{I_2} a_i) - \proj_L\Big(A_{I_1} \sum_{\substack{I_2' + I_2'' = I_2\\ I_2'\neq 0}} A_{I_2''} \pprod_\trir\big(\rho_\trir (A_{I_2'})a_i\big) \Big).\label{go14}
	\end{align}
	For the first summand, we note that by the algebra morphism property of $\cop$ combined with \eqref{go11},
	\begin{align*}
		\cop A_{I_1}A_{I_2} = \sum_{\substack{I_1' + I_1'' = I_1\\ I_2'+I_2'' = I_2}} A_{I_1'}A_{I_2'}\otimes A_{I_1''}A_{I_2''},
	\end{align*}
	therefore \eqref{go07} implies
	\begin{align*}
		A_{I_1} A_{I_2} a_i = \sum_{\substack{I_1' + I_1'' = I_1\\ I_2'+I_2'' = I_2}} A_{I_1''}A_{I_2''}\pprod_\trir\big(\rho_\trir (A_{I_1'}A_{I_2'})a_i\big) .
	\end{align*}
	Since $\rho_\trir (A_{I_1'}A_{I_2}') a_i$ $\in$ $L$, the above yields
	\begin{align}\label{go15}
		\proj_L \big(A_{I_1} A_{I_2} a_i\big) =  \rho_\trir(A_{I_1}A_{I_2})a_i.
	\end{align}
	In the second summand in \eqref{go14}, we first note that if $I_2=0$ the condition under the sum is empty, so the whole term does not contribute to \eqref{go14} and \eqref{go15} already gives \eqref{go12}. For $I_2\neq 0$,  we use the induction hypothesis and write
	\begin{align*}
		{}&\proj_L\Big(A_{I_1} \sum_{\substack{I_2' + I_2'' = I_2\\ I_2'\neq 0}} A_{I_2''} \pprod_\trir\rho_\trir (A_{I_2'})a_i \Big)\\
		 &\quad= \rho_\trir(A_{I_1})\sum_{\substack{I_2' + I_2'' = I_2\\ I_2'\neq 0}} \proj_L \Big(A_{I_2''} \pprod_\trir\rho_\trir (A_{I_2'})a_i \Big),
	 \end{align*}
 which as before reduces to $\rho_\trir(A_{I_1})\rho_\trir (A_{I_2})a_i$.	This cancels out with \eqref{go15} by the algebra morphism property of $\rho_\trir$.
\end{proof}	
\begin{remark}
	See \cite[Proposition 3.2]{GO2} for a version of \eqref{go12} in the context of tree grafting.
\end{remark}	
\subsection{Grading and transposition}\label{subsec::2.2}
	Property \eqref{go12} means that, as a coalgebra, $\rmU(L,\trir)$ is dual to the free commutative algebra over $\{a_i\}_{i\in\indexset}$, which we will henceforth denote by $\R[\indexset]$, with the canonical pairing given in terms of the Guin-Oudom basis, i.~e.
	\begin{align}\label{dua02}
		\langle A_{I'}, a^I\rangle = \delta_{I'}^{I},
	\end{align}
	where $a^I := \prod_{i\in \mathcal{I}} a_i^{I(i)}$. The question now is whether the Hopf algebra structure, and not just the coalgebra, can be transposed to $\R[\indexset]$. This is in general not true, but in this subsection we give sufficient conditions under which transposition is possible that fit our purposes.
	\begin{definition}
		A pre-Lie algebra $(L,\trir)$ is \textit{graded} if it can be decomposed as $L = \bigoplus_{\kappa\in \mathsf{K}} L_\kappa$ for some countable subset $\mathsf{K}\subset \R$ and
		\begin{align*}
			L_\kappa \trir L_{\kappa'}\subset L_{\kappa + \kappa'}.
		\end{align*}
		We will say that $(L,\trir)$ is $\kappamin$-connected, $\kappamin\in \R$, if $\min_{\kappa\in\mathsf{K}} = \kappamin$. If $\kappamin = 0$, we simply say $(L,\trir)$ is connected.
	\end{definition}
Without loss of generality, we assume that the basis $\{a_i\}_{i\in \indexset}$ is homogeneous, and we consider the map $\gr : \indexset\to \mathsf{K}$ that associates to every $i\in\indexset$ its degree, i.~e.
\begin{align*}
	\grad{i} = \kappa \iff a_i \in L_\kappa.
\end{align*}
Note that this implies for the structure constants of the pre-Lie algebra
\begin{align}\label{gra01}
	\big(\rho_\trir(a_i)\big)_k^j \neq 0\,\implies \grad{k} = \grad{i} + \grad{j}.
\end{align}
If $(L,\trir)$ is $\kappamin$-connected for some $\kappamin >0$, then $\grad{\indexset}$ is bounded from below away from $0$.
\begin{proposition}\label{prop:tra01}
	Let $(L,\trir)$ be a $\kappamin$-connected graded pre-Lie algebra with $\kappamin >0$. Assume that the structure constants \eqref{go18} satisfy the following finiteness property:
	\begin{align}\label{fin01}
		\mbox{for all }k\in\indexset,\quad\#\{i,j\in\indexset\,|\,\big(\rho_\trir(a_i)\big)_k^j \neq 0\}<\infty.
	\end{align}
	Then the free commutative algebra $\R[\indexset]$ is a connected graded Hopf algebra, where the coproduct $\Delta_\trir : \R[\indexset] \to \R[\indexset]\otimes \R[\indexset]$ is the transpose of the concatenation product in $\rmU(L,\trir)$ with respect to the Guin-Oudom basis. More precisely, $\Delta_\trir$ is defined on monomials as
	\begin{align}\label{tra01}
		\Delta_\trir a^{I} = \sum_{I',I''} (\Delta_\trir)_{I',I''}^I a^{I'}\otimes a^{I''},
	\end{align}
	where $(\Delta_\trir)_{I',I''}^I$ are the structure constants of the concatenation product of $\rmU(L,\trir)$ with respect to the Guin-Oudom basis, i.~e.
	\begin{align*}
		A_{I'}A_{I''} = \sum_{I} (\Delta_\trir)_{I',I''}^I A_I.
	\end{align*}	
\end{proposition}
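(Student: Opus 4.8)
The plan is to realise $\Delta_\trir$ as the transpose of concatenation under the pairing \eqref{dua02}, to reduce the whole statement to the finiteness of $\Delta_\trir a_i$ for a single generator $a_i$, and to read off that finiteness from a degree bound together with hypothesis \eqref{fin01}.

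First I would extend the grading of $(L,\trir)$ to $\rmU(L,\trir)$: the defining relations $bb'-b'b-[b,b']_\trir$ with $b\in L_\kappa$, $b'\in L_{\kappa'}$ are homogeneous of degree $\kappa+\kappa'$, so the total-degree grading of the tensor algebra descends and makes concatenation graded; using \eqref{go02} as base case and iterating \eqref{go06} along the length filtration, one checks that $a\mapsto U\pprod_\trir a$ is graded too, whence by \eqref{go30} each $A_I$ is homogeneous of degree $\grad I:=\sum_i I(i)\grad i$. Likewise $\rho_\trir$ sends a degree-$d$ element of $\rmU(L,\trir)$ to a degree-$d$ endomorphism of $L$ (true on $L$ by gradedness and stable under composition). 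Since $\kappamin>0$ and $\grad I\ge\kappamin\,\length I$, setting $\grad{a^I}:=\grad I$ makes $\R[\indexset]$ a connected graded commutative algebra, and as $A_{I_1}A_{I_2}$ is homogeneous of degree $\grad{I_1}+\grad{I_2}$, the structure constants $(\Delta_\trir)^I_{I_1,I_2}$ in \eqref{tra01} vanish unless $\grad{I_1}+\grad{I_2}=\grad I$, so in particular $\length{I_1},\length{I_2}\le\grad I/\kappamin$.

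Next I would note that by \eqref{go11} the product of $\R[\indexset]$, $a^{I_1}a^{I_2}=a^{I_1+I_2}$, is the transpose of the coproduct of $\rmU(L,\trir)$; since that coproduct is an algebra morphism, the (a priori only formal) transpose $\Delta_\trir$ of concatenation is multiplicative for this product, i.e.\ $\Delta_\trir a^I=\prod_i(\Delta_\trir a_i)^{I(i)}$. Hence it suffices to show each $\Delta_\trir a_i$ is a \emph{finite} sum; coassociativity, counitality and the graded-bialgebra axioms then transpose from the corresponding properties of $\rmU(L,\trir)$, and an antipode exists because $\R[\indexset]$ is connected graded. Pairing $A_{I_1}A_{I_2}$ against $a_i=a^{e_i}$ amounts to applying $\proj_L$ and invoking \eqref{go12}: the term $\proj_L(A_{I_1})\epsilon(A_{I_2})$ survives only for $I_2=0$, $I_1=e_i$, and $\rho_\trir(A_{I_1})\proj_L(A_{I_2})$ only for $\length{I_2}=1$, giving
\begin{align*}
	\Delta_\trir a_i = a_i\otimes 1 + \sum_{I_1\in M(\indexset)}\sum_{j\in\indexset}\big(\rho_\trir(A_{I_1})\big)^j_i\, a^{I_1}\otimes a_j,
\end{align*}
where $(\rho_\trir(A_{I_1}))^j_i$ is the $a_i$-component of $\rho_\trir(A_{I_1})a_j\in L$.

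The remaining, and main, point is that for fixed $i$ only finitely many pairs $(I_1,j)$ give $(\rho_\trir(A_{I_1}))^j_i\neq0$. Such a pair forces $\grad{I_1}+\grad j=\grad i$, hence $\length{I_1}\le\grad i/\kappamin$, so it suffices to bound, for each $\ell\in\N_0$, the family of such pairs with $\length{I_1}\le\ell$, which I would do by strong induction on $\ell$. Writing $I_1=I_1'+e_k$ and applying the generalized Leibniz rule \eqref{go07} to $A_{I_1'}a_k$ (together with \eqref{go11} and $\rho_\trir$ being an algebra morphism) yields the recursion
\begin{align*}
	I_1(k)\,\rho_\trir(A_{I_1}) = \rho_\trir(A_{I_1'})\,\rho_\trir(a_k) \;-\!\!\sum_{\substack{J'+J''=I_1'\\ J'\neq 0}}\sum_{l\in\indexset}\big(\rho_\trir(A_{J'})\big)^k_l\,\big(J''(l)+1\big)\,\rho_\trir(A_{J''+e_l}),
\end{align*}
in which $\length{J'},\length{J''+e_l}\le\length{I_1'}=\length{I_1}-1$. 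The inductive statement is: for every $m\in\indexset$ the set $\{(I,j)\mid\length I\le\ell,\ (\rho_\trir(A_I))^j_m\neq0\}$ is finite. The base $\ell=0$ is immediate since $\rho_\trir(A_0)=\id$. For the step, the first term of the recursion contributes pairs $(I_1'+e_k,j)$ built from a pair $(I_1',m')$ with $\length{I_1'}\le\ell$ nonzero for $i$ (finitely many, by hypothesis) and a pair $(k,j)$ with $(\rho_\trir(a_k))^j_{m'}\neq0$ (finitely many, by \eqref{fin01}); the summands contribute pairs built from a pair $(\tilde I,j)$ with $\length{\tilde I}\le\ell$ nonzero for $i$ (hypothesis), a choice of $l\in\mathrm{supp}(\tilde I)$ with $J''=\tilde I-e_l$, and a pair $(J',k)$ with $\length{J'}\le\ell$ nonzero for $l$ (hypothesis again), each datum determining $I_1$. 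This closes the induction, and combined with $\length{I_1}\le\grad i/\kappamin$ it gives the finiteness of $\Delta_\trir a_i$, hence that $\Delta_\trir$ maps $\R[\indexset]$ into $\R[\indexset]\otimes\R[\indexset]$. The hard part is exactly this last estimate: converting the local-finiteness hypothesis \eqref{fin01} on the single pre-Lie product \eqref{go18} into local finiteness of the iterated action encoded by $\rho_\trir(A_{I_1})$, which is what the recursion above and the bound $\kappamin>0$ are for.
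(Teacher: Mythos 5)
Your proof is correct and follows essentially the same route as the paper's: reduce the Hopf structure to well-definedness of $\Delta_\trir$ on generators via connected gradedness and multiplicativity, derive the formula \eqref{tra04} from \eqref{go12}, bound $\length{I'}$ through the grading and $\kappamin>0$, and obtain finiteness at fixed length by iterating \eqref{fin01} through the algebra morphism property of $\rho_\trir$. Your explicit recursion derived from \eqref{go07} is simply a more detailed rendering of the paper's appeal to the triangular Poincaré-Birkhoff-Witt expansion of $A_{I'}$ at that step.
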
	
\begin{remark}
	A sufficient condition for \eqref{fin01} is that for every $\kappa\in \mathsf{K}$ the pre-image of $\kappa$ under $\grad{\cdot}$ is a finite set; equivalently, that for every $\kappa$ $\in$ $\mathsf{K}$, $L_\kappa$ is finite-dimensional.
\end{remark}	
\begin{proof}
	Since the grading is strictly positive, the universal enveloping algebra $\rmU(L,\trir)$ is a connected graded Hopf algebra, i.~e.
	\begin{align*}
		\rmU = \R \oplus \bigoplus_{\nu>0} \rmU_\nu.
	\end{align*}
	As a consequence, if $\R[\indexset]$ is a bialgebra, it will be a connected graded bialgebra, thus automatically a Hopf algebra. Our goal is then to show that $\R[\indexset]$ is indeed a bialgebra, which in turn reduces to showing that \eqref{tra01} is well-defined. This will be a consequence of the finiteness property
	\begin{align}\label{tra02}
		\mbox{for all }I\in M(\indexset),\quad \#\{I',I''\in M(\indexset)\,|\, (\Delta_\trir)_{I',I''}^{I}\neq 0\} < \infty.
	\end{align}
	By the algebra morphism property of the coproduct of $\rmU(L,\trir)$, which thanks to \eqref{go11} is expressed in coordinates as
	\begin{align*}
		(\Delta_\trir)_{I',I''}^{I_1+I_2} = \sum_{\substack{I_1' + I_2' = I'\\ I_1''+I_2'' = I''}} (\Delta_\trir)_{I_1',I_1''}^{I_1} (\Delta_\trir)_{I_2',I_2''}^{I_2},
	\end{align*}
	it is enough to show \eqref{tra02} for $I$ of length one, i.~e. $I=e_k$ for some $k\in\indexset$, which we now fix. Moreover, \eqref{go12} in coordinates implies that
	\begin{align}\label{tra04}
		(\Delta_\trir)_{I',I''}^{e_k} = \delta_{I'}^{e_k} \delta_{I''}^0 + \sum_{j\in\indexset}\big(\rho_\trir (A_{I'})\big)_k^j \delta_{I''}^{e_j}.
	\end{align} 
	The first summand obviously produces finitely many $I', I''$, so we focus on the second one. We need to show the strengthening of \eqref{fin01}
	\begin{align}\label{fin03}
		\mbox{for all }k\in\indexset,\quad \#\{I'\in M(\indexset),\,j\in\indexset\,|\,\big(\rho_\trir(A_{I'})\big)_k^j \neq 0\}<\infty.
	\end{align}
	If $I'$ is of a fixed length, we apply \eqref{go07} to express $A_{I'}$ as a concatenation of basis elements (i.~e. express $A_{I'}$ in a Poincaré-Birkhoff-Wittt basis); we then use the algebra morphism property of $\rho_\trir$ and apply assumption \eqref{fin01} iteratively (we need only $\length{I'}$ applications), yielding finitely many $j$ and $I'$. We still need to bound the length of $I'$ in terms of $k$. To this end, we note that by definition \eqref{go02}, \eqref{go06} the action $\pprod_\trir$ respects the grading, i.~e.
	\begin{align*}
		\rmU_\nu \pprod_\trir \rmU_{\nu'} \subset \rmU_{\nu + \nu'}.
	\end{align*}
	 As a consequence, the basis elements $\{A_{I}\}_{I\in M(\indexset)}$ are homogeneous of degree
	\begin{align*}
		\grad{I} := \sum_{i\in\indexset} I(i)\grad{i}.
	\end{align*}
	Note that the connectedness assumption implies
	\begin{align}\label{coe01}
		\grad{I} \geq \kappamin\length{I},
	\end{align}
	and $\kappamin>0$, so it is enough to obtain a bound on $\grad{I}$. By \eqref{gra01} iteratively via the algebra morphism property, it holds
	\begin{align}\label{gra01bis}
		\big(\rho_\trir(A_{I'})\big)_k^j \neq 0\,\implies \grad{k} = \grad{I} + \grad{j}.
	\end{align}
	The claim then follows since $\grad{I} \leq \grad{k} - \kappamin$.
\end{proof}	
\subsection{The group of characters and the exponential map}\label{subsec::2.3}
The set of multiplicative functionals of a Hopf algebra, also called \textit{characters}, carries a group structure, cf. \cite[Theorem 2.1.5]{Abe}: We denote it by $\mcG(L,\trir)$ $:=\textnormal{Alg}(\R[\indexset], \R)$, equipped with the convolution product and the inverse
\begin{align}\label{gro01}
	F*G := (F\otimes G)\Delta_\trir,\,\,\,F^{-1} = F\mathcal{A},
\end{align}
where $\mathcal{A}$ is the antipode of $\R[\indexset]$ (thus dual to that of $\rmU(L,\trir)$). Formally, under the duality pairing \eqref{dua02} we may write $F\in \mcG(L,\trir)$ as
\begin{align}\label{exp01}
	\sum_{I\in M(\indexset)} \bff^I A_I,
\end{align}
where $\bff^I \in \R$ are characterized by the multiplicativity property, i.~e.
\begin{align}\label{exp20}
	\bff^{e_i} = \bff_i\in\R,\,\,\,\bff^I = \prod_{i\in M(\mathcal{I})} \bff_i^{I(i)}.
\end{align}
When seen as an element of the dual Hopf algebra $\R[\indexset]^*$, to which the concatenation product and the coproduct $\cop$ of $\rmU(L,\trir)$ are extended, $F$ $\in$ $\mcG(L,\trir)$ is grouplike, i.~e.
\begin{equation}\label{grouplike}
	\cop F = F \otimes F.
\end{equation}

\medskip

According to \eqref{exp20}, each element of $\mcG(L,\trir)$ is given by a Lie series $\bff\in L^*$; moreover, by definition \eqref{go30} and after resummation (cf. e.~g. \cite[Lemma A.2]{LOT}) we have\footnote{ The last equality should be understood as $\bff^{\pprod_\trir l}$ $=$ $\bff \pprod_\trir \cdots \pprod_\trir \bff$, with $\pprod_\trir$ extended to $L^*$.}
\begin{align}\label{formal}
	\sum_{I\in M(\indexset)} \bff^I A_I &= \sum_{l\geq 0} \frac{1}{l!} \sum_{i_1,...,i_l\in\indexset} \bff_{i_1}\cdots \bff_{i_l} a_{i_1}\pprod_\trir \cdots \pprod_\trir a_{i_l} = \sum_{l\geq 0} \frac{1}{l!}\bff^{\pprod_\trir l}.
\end{align}
We thus have an exponential map associated to the pre-Lie product $\trir$,
\begin{align*}
	\begin{array}{rccl}
		\exp_{\trir} : & L^* &\longrightarrow &\mcG(L,\trir)\\
		 &\bff & \longmapsto & \exp_{\trir}(\bff) = \sum_{I\in M(\indexset)} \bff^I A_I.
	\end{array}
\end{align*}
Note that
\begin{align*}
	\prescript{}{\R[\indexset]^*}{\langle} \exp_{\trir}(\bff), a_i \rangle_{\R[\indexset]} = \bff_i = \prescript{}{L^*}{\langle} \bff , a_i \rangle_L.
\end{align*}
As a consequence, $\exp_{\trir}(\bff)$ is the multiplicative extension of $\bff\in L^*$ to the free commutative algebra $\R[\indexset]$. Thanks to $\rho_\trir$, $\exp_\trir(\bff)$ acts on Lie series as well.
\begin{lemma}\label{lem:2.9}
	For all $\bff\in L^*$, $\rho_\trir(\exp_\trir (\bff))$ $\in$ $\textnormal{End}(L^*)$.
\end{lemma}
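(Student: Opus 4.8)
The plan is to interpret $\rho_\trir(\exp_\trir(\bff))$ as the series $\sum_{I\in M(\indexset)}\bff^I\,\rho_\trir(A_I)$ and to check, under the standing grading and finiteness hypotheses of Proposition~\ref{prop:tra01}, that this series is summable coordinate-wise and hence defines an endomorphism of $L^*$. I regard an element $\bfg\in L^*$ as a formal series $\sum_{k\in\indexset}\bfg_k a_k$ with arbitrary coefficients $\bfg_k\in\R$, and for each $I$ I set $\rho_\trir(A_I)\bfg:=\sum_k\bfg_k\,\rho_\trir(A_I)a_k$; I would then verify that this, and then the full sum over $I$, is a well-defined element of $L^*$ by showing that every coefficient is a finite sum.

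The key inputs are homogeneity and finiteness. From \eqref{gra01bis}, $\rho_\trir(A_I)$ is homogeneous of degree $\grad{I}$, so in the expansion of $\rho_\trir(A_I)\bfg$ the coefficient of $a_m$ only picks up those $\bfg_k$ with $\grad{k}=\grad{m}-\grad{I}$, of which there are finitely many by the finiteness hypothesis (finitely many basis indices of any given degree); thus $\rho_\trir(A_I)\bfg\in L^*$. Summing over $I$, the coefficient of $a_m$ in $\sum_I\bff^I\rho_\trir(A_I)\bfg$ can receive a contribution only when $\grad{m}-\grad{I}\in\mathsf{K}$, hence only for $I$ with $0\le\grad{I}\le\grad{m}-\kappamin$; by \eqref{coe01} this bounds $\length{I}$, and then finiteness of the number of basis indices below a given degree leaves finitely many such $I$. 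So each coefficient is a finite sum and $\rho_\trir(\exp_\trir(\bff))\in\textnormal{End}(L^*)$; this is exactly the bookkeeping already done for \eqref{fin03} in the proof of Proposition~\ref{prop:tra01}, which I would simply invoke. Note the argument uses nothing about the specific coefficients $\bff^I$, so it in fact exhibits $\rho_\trir$ as an algebra morphism $\R[\indexset]^*\to\textnormal{End}(L^*)$, and \eqref{exp20}--\eqref{formal} identifies $\exp_\trir(\bff)$ as an element of $\R[\indexset]^*$.

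I expect the only real obstacle to be this local-finiteness check; modulo it, the statement is a formal consequence of the grading and of work already carried out. An alternative, completion-free route is to first extend the pre-Lie product itself to $\trir:L^*\times L^*\to L^*$ --- well defined precisely under the finiteness property \eqref{fin01}, since the coefficient of $a_k$ in $\bff\trir\bfg$ is $\sum_{i,j}\bff_i\bfg_j(\rho_\trir(a_i))_k^j$ --- and then obtain $\rho_\trir(\exp_\trir(\bff))$ from the series representation \eqref{formal}; both routes yield the same operator.
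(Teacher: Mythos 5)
Your proposal is correct and follows essentially the same route as the paper: expand $\rho_\trir(\exp_\trir(\bff))$ as $\sum_{I}\bff^I\rho_\trir(A_I)$ via \eqref{exp01} and observe that coordinate-wise finiteness of all sums is exactly the strong finiteness property \eqref{fin03} already established in the proof of Proposition~\ref{prop:tra01}. The only minor caveat is that your intermediate re-derivation invokes ``finitely many basis indices of any given degree'', which is merely a sufficient condition for \eqref{fin01} (cf.\ the remark after Proposition~\ref{prop:tra01}) rather than the standing hypothesis, but this is harmless since you ultimately defer to \eqref{fin03}, which is proved there under \eqref{fin01} alone.
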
	
\begin{proof}
	From representation \eqref{exp01} we formally have
	\begin{align*}
		\rho_\trir(\exp_\trir (\bff)) = \sum_{I\in M(\indexset)} \bff^I \rho_\trir(A_I).
	\end{align*}
	This expression is actually meaningful as an endomorphism of $L^*$ thanks to the strong finiteness property \eqref{fin03}.
\end{proof}	

\medskip

By the algebra morphism property of $\rho_\trir$, now extended as in the previous lemma, 
\begin{align}\label{com11}
	\rho_\trir \big(\exp_{\trir}(\bff) * \exp_{\trir}(\bfg)\big) = \rho_\trir \big(\exp_{\trir}(\bff)\big) \rho_\trir \big(\exp_{\trir}(\bfg)\big).
\end{align}
Furthermore, the composition rule \eqref{gro01} can be expressed in terms of a transformation of the argument of $\exp_{\trir}$, via the following Baker-Campbell-Hausdorff formula.
\begin{lemma}
	For all $\bff,\bfg\in L^*$,
	\begin{align}\label{com01}
		\exp_{\trir}(\bff) * \exp_{\trir}(\bfg) = \exp_{\trir}\Big(\bff + \rho_\trir\big(\exp_{\trir} (\bff)\big)\bfg\Big).
	\end{align}
\end{lemma}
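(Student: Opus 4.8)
The plan is to exploit the fact, established in the preceding lemmas, that every character is determined by its pairing with the length-one elements $a_i$ (equivalently, by its image under $\proj_L$), so it suffices to check that both sides of \eqref{com01} agree after applying $\proj_L$. Concretely, writing $\exp_\trir(\bff)*\exp_\trir(\bfg) = \sum_{I} \bfh^I A_I$ for the (a priori unknown) character on the left, multiplicativity \eqref{exp20} tells us that this character is completely pinned down by the numbers $\bfh_i = \langle \exp_\trir(\bff)*\exp_\trir(\bfg), a_i\rangle$. So the whole statement reduces to the identity in $L^*$
\begin{align*}
	\proj_L\big(\exp_\trir(\bff)*\exp_\trir(\bfg)\big) = \bff + \rho_\trir\big(\exp_\trir(\bff)\big)\bfg,
\end{align*}
where on the left $\proj_L$ is extended from $\rmU(L,\trir)$ to (a suitable completion of) its dual, which is meaningful by the finiteness property \eqref{fin03} exactly as in Lemma \ref{lem:2.9}.

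First I would unfold the convolution product: by \eqref{gro01}, $\exp_\trir(\bff)*\exp_\trir(\bfg) = (\exp_\trir(\bff)\otimes \exp_\trir(\bfg))\Delta_\trir$, and since $\Delta_\trir$ is by definition (Proposition \ref{prop:tra01}) the transpose of the concatenation product of $\rmU(L,\trir)$ with respect to the Guin–Oudom basis, this convolution is nothing but the concatenation product $\exp_\trir(\bff)\,\exp_\trir(\bfg)$ computed in the dual Hopf algebra $\R[\indexset]^*$. The key input is then Lemma \ref{lem:2.6}(ii), equation \eqref{go12}: $\proj_L(UU') = \proj_L(U)\epsilon(U') + \rho_\trir(U)\proj_L(U')$. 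Applying this with $U = \exp_\trir(\bff)$ and $U' = \exp_\trir(\bfg)$ — legitimate after extending \eqref{go12} by (bi)linearity and continuity to the grouplike elements, using grouplikeness \eqref{grouplike} to control the sums — and using $\epsilon(\exp_\trir(\bfg)) = 1$ together with $\proj_L(\exp_\trir(\bff)) = \bff$ and $\proj_L(\exp_\trir(\bfg)) = \bfg$ (both immediate from the definition of $\exp_\trir$ and the pairing \eqref{dua02}), I obtain exactly $\bff + \rho_\trir(\exp_\trir(\bff))\bfg$.

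Finally I would close the argument by invoking uniqueness: a character of $\R[\indexset]$ is the multiplicative extension of its restriction to the generators $a_i$, so two characters with the same $\proj_L$ coincide; since $\exp_\trir$ is precisely the map sending a Lie series to its multiplicative extension, the identity at the level of $\proj_L$ promotes to the claimed identity of characters, $\exp_\trir(\bff)*\exp_\trir(\bfg) = \exp_\trir\big(\bff + \rho_\trir(\exp_\trir(\bff))\bfg\big)$. The main obstacle I anticipate is not the algebra but the bookkeeping of convergence: one must make sure that \eqref{go12}, which is stated for elements of $\rmU(L,\trir)$, extends to the infinite grouplike series $\exp_\trir(\bff)$, $\exp_\trir(\bfg)$ living in the completed dual. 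This is where the strictly positive grading and the finiteness properties \eqref{fin01}, \eqref{fin03} do the real work — exactly as they did in Lemma \ref{lem:2.9} — guaranteeing that $\rho_\trir(\exp_\trir(\bff))$ is a well-defined endomorphism of $L^*$ and that all the rearrangements of sums are justified degree by degree.
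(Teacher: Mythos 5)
Your proposal is correct and follows essentially the same route as the paper: the paper also reduces \eqref{com01} to evaluating both characters on the generators $a_i$, and its key input is \eqref{tra04}, which is precisely the coordinate form of \eqref{go12} that you invoke. The only cosmetic difference is that the paper pairs $\bff\otimes\bfg$ against the (finitely many, by \eqref{fin03}) terms of $\Delta_\trir a_i$, thereby sidestepping the extension of \eqref{go12} to the completed dual that you handle via the finiteness properties.
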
	
\begin{proof}
	This is an easy consequence of \eqref{tra04}:
	\begin{align*}
		(\bff*\bfg)_i &= \langle (\bff\otimes\bfg), \Delta_\trir a_i\rangle \\ &= \langle \bff \otimes \bfg, a_i \otimes 1\rangle + \sum_{\substack{I'\in M(\indexset)\\ j\in\indexset}} \big(\rho_\trir (A_{I'})\big)_j^i \langle \bff \otimes \bfg, A^{I'}\otimes a_j \rangle \\ &= \bff_i + \sum_{\substack{I'\in M(\indexset)\\ j\in\indexset}} \bff^{I'} \big(\rho_\trir (A_{I'})\big)_j^i \bfg_j  \\ &= \Big( \bff + \rho_\trir\big(\exp_{\trir} (\bff)\big)\bfg \Big)_i.
	\end{align*}
\end{proof}	
\begin{remark}\label{rem:trivial}
	A trivial special case of this whole construction, which will play a role in a specific type of translation of rough paths (cf. Remark \ref{rem:4.14} below), is given by the trivial pre-Lie product $ a \tricom a' = 0$. In this situation, the Lie algebra is trivial and the Guin-Oudom basis corresponds exactly to the concatenation basis of $L$, as $\rmU(L,\tricom)$ $\simeq$ $\R[\indexset]$. The construction of $\rho_{\tricom}$ is now trivial, and yields $\rho_{\tricom} = \epsilon$ as maps in $\rmU(L,\tricom)$. Consequently, \eqref{go12} now takes the form
	\begin{align*}
		\proj_L (U U') = \proj_L (U) \epsilon(U') + \epsilon(U) \proj_L (U') ,
	\end{align*}	
	which lifts to the composition rule 
	\begin{align*}
		\exp_{\tricom}(\bff) * \exp_{\tricom}(\bfg) = \exp_{\tricom}(\bff + \bfg).
	\end{align*}
\end{remark}	
\subsection{Modules and comodules}
While $\rho_\trir$ associates our pre-Lie and Hopf algebras to a space of endomorphisms of $L$, in applications we sometimes want to interpret these objects as endomorphisms of a \textit{different} vector space. Many of the properties of $\rho_\trir$ also hold and yield modules and comodules.

\medskip

Let $V$ be a vector space with a countable basis $\{v_j\}_{j\in\indexsettwo}$. Let $\psi : L \to \textnormal{End}(V)$ be a Lie algebra morphism with respect to the composition commutator. By the universality property, we extend $\psi$ uniquely to an algebra morphism $\psi : \rmU(L,\triangleright) \to \textnormal{End}(V)$. This defines an action $\psi : \rmU(L,\triangleright) \otimes V \to V$, and $V$ is a left $\rmU(L,\triangleright)$-module. We now give a sufficient condition to transpose this structure based on finiteness properties.
\begin{lemma}\label{lem:com01}
	Assume that
	\begin{align}\label{fin04}
		\mbox{for all }j\in\indexsettwo \quad\#\left\{j'\in \indexsettwo, I\in M(\indexset)\,|\,(\psi(A_I))_j^{j'}\neq 0\right\}<\infty.
	\end{align}
	Then there exists a map $\psi^\dagger : V \to (\R[\indexset],\Delta_\triangleright) \otimes V$ such that $V$ is a left comodule over $(\R[\indexset],\Delta_\triangleright)$. In particular,
	\begin{align}\label{mod01}
		(\Delta_\triangleright \otimes \id)\psi^\dagger = (\id \otimes \psi^\dagger) \psi^\dagger.
	\end{align}
\end{lemma}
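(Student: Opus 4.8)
The plan is to mimic, in the comodule setting, the transposition argument used in Proposition \ref{prop:tra01} and in Lemma \ref{lem:2.6}(ii). First I would define $\psi^\dagger$ as the transpose of the action $\psi : \rmU(L,\trir)\otimes V \to V$ with respect to the Guin--Oudom basis $\{A_I\}$ of $\rmU(L,\trir)$ and the basis $\{v_j\}$ of $V$: concretely, writing $\psi(A_I) v_j = \sum_{j'} (\psi(A_I))_j^{j'} v_{j'}$, I set
\begin{align*}
	\psi^\dagger v_{j'} = \sum_{I\in M(\indexset)} \sum_{j\in\indexsettwo} (\psi(A_I))_j^{j'}\, a^I \otimes v_j .
\end{align*}
The finiteness hypothesis \eqref{fin04} is exactly what is needed for this sum to lie in $\R[\indexset]\otimes V$ (only finitely many pairs $(I,j)$ contribute for each fixed $j'$), so $\psi^\dagger$ is well-defined as a linear map $V \to \R[\indexset]\otimes V$.

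Next I would verify the coassociativity identity \eqref{mod01}. The cleanest route is duality: $\Delta_\trir$ is by construction (Proposition \ref{prop:tra01}) the transpose of concatenation in $\rmU(L,\trir)$ with respect to the Guin--Oudom basis, and $\psi^\dagger$ is the transpose of $\psi$; meanwhile $\psi$ is an algebra morphism, i.e. $\psi(UU') = \psi(U)\psi(U')$, which is precisely the statement that $V$ is a left $\rmU(L,\trir)$-module with $\rmU$ acting by concatenation. Dualizing the module associativity $\psi\circ(\mathrm{mult}\otimes\id) = \psi\circ(\id\otimes\psi)$ term by term against the pairing \eqref{dua02} on the $\R[\indexset]$ factors and the $\{v_j\}$ basis on $V$ gives exactly \eqref{mod01}. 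To make this rigorous one checks the identity on a basis vector $v_{j'}$ by pairing both sides with $a^{I_1}\otimes a^{I_2}\otimes v_j^*$ and matching coefficients: the left side produces $\sum_{I}(\Delta_\trir)_{I_1,I_2}^{I}(\psi(A_I))_j^{j'}$, which equals the structure constant of $\psi(A_{I_1}A_{I_2})$, while the right side produces $\sum_{j''}(\psi(A_{I_1}))_{j''}^{j'}(\psi(A_{I_2}))_j^{j''}$, the structure constant of $\psi(A_{I_1})\psi(A_{I_2})$; these agree by the algebra morphism property. I would also record the counit axiom $(\epsilon\otimes\id)\psi^\dagger = \id$, which is immediate since $A_0 = 1$ acts as the identity and $\epsilon(a^I)=\delta_I^0$.

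The main obstacle is purely a bookkeeping one: ensuring that every infinite sum appearing in the argument actually converges in the relevant (algebraic, i.e. finitely-supported) sense, so that "dualize term by term" is legitimate rather than formal. This is where \eqref{fin04} does the real work, and it should be invoked exactly as \eqref{fin03} was invoked in Lemma \ref{lem:2.9} and Proposition \ref{prop:tra01} — one notes that on each fixed $v_{j'}$ only finitely many $(I,j)$ enter, and that applying $\Delta_\trir$ or a second copy of $\psi^\dagger$ preserves this finiteness because the Guin--Oudom coproduct \eqref{go11} splits $A_I$ into finitely many pieces $A_{I'}\otimes A_{I''}$ with $I'+I''=I$. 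Everything else is a routine transpose of the module structure, with no new ideas required beyond those already deployed for $\rho_\trir$.
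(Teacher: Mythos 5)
Your proof is correct and essentially identical to the paper's: the paper likewise defines $\psi^\dagger$ on the basis $\{v_j\}$ as the transpose of the action with respect to the Guin--Oudom basis, invokes \eqref{fin04} for well-definedness, and observes that \eqref{mod01} is just the transposition of the algebra morphism property of $\psi$. The only caveat is that your index convention $\psi(A_I)v_j=\sum_{j'}(\psi(A_I))_j^{j'}v_{j'}$ is opposite to the paper's (cf.\ \eqref{go18}, where the lower index labels the output), so read literally your appeal to \eqref{fin04} quantifies over the wrong slot; under the paper's convention your formula for $\psi^\dagger$ and your finiteness claim (finitely many $(I,\text{input})$ per output) match \eqref{fin04} exactly.
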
	
\begin{proof}
	This is trivial defining in terms of the basis of $V$
	\begin{align*}
		\psi^\dagger v_j = \sum_{\substack{j'\in \indexsettwo\\ I\in M(\indexset)}} (\psi(A_I))_j^{j'} a^I \otimes v_{j'}.
	\end{align*}
	The finiteness property implies that this expression is well-defined. The module structure then transposes to a comodule; in particular \eqref{mod01} is the transposition of the algebra morphism property of $\psi$.
\end{proof}	
\begin{remark}
	A particular case of \eqref{mod01} is $\psi = \rho_\trir$:
	\begin{align*}
		(\Delta_\triangleright \otimes \id)\rho_\trir^\dagger = (\id \otimes \rho_\trir^\dagger) \rho_\trir^\dagger.
	\end{align*}	
\end{remark}	
\begin{remark}\label{rem:fin10}
Following the proof of Proposition \ref{prop:tra01}, the finiteness property \eqref{fin04} can be obtained from the case $\length{I} =1$, i.~e.
\begin{align}\label{fin05}
	\mbox{for all }j\in\indexsettwo\quad\#\left\{j'\in \indexsettwo, i\in \indexset\,|\,(\psi(a_i))_j^{j'}\neq 0\right\}<\infty,
\end{align}	
if we also have 
\begin{align}\label{fin06}
	(\psi(A_I))_j^{j'}\neq 0\,\implies \length{I}\leq C_j,
\end{align}
where $C_j$ is a constant depending only on $j\in \mathcal{J}$. Note that \eqref{fin05} means that $\psi\in \textnormal{End}(V^*)$. A sufficient condition for \eqref{fin06} can again be expressed in terms of gradings. Suppose that $V$ is graded after $\gr_V$; that the basis elements $\{v_j\}_{j\in\indexsettwo}$ are homogeneous, i.~e. $v_j \in V_{\gr_V(j)}$; that $\gr_V$ is bounded from below (but not necessarily positive!); and that $\gr_V$ is compatible with $\gr_L$, in the sense that
\begin{align*}
	(\psi(a_i))_j^{j'}\neq 0\,\implies \gr_V(j) = \gr_L(i) + \gr_V(j').
\end{align*}
By the algebra morphism property we have
\begin{align*}
	(\psi(A_I))_j^{j'}\neq 0\,\implies \gr_L(I) = \gr_V(j) - \gr_V(j') \leq \gr_V(j) + C
\end{align*}
for some value $C$ that bounds $\gr_V$ from below. Then \eqref{fin06} follows from \eqref{coe01}.
\end{remark}
	
\medskip

Given $F\in \mathcal{G}(L,\triangleright)$, we now associate the map $(F\otimes \id)\psi^\dagger \in \textnormal{End}(V)$. Together with the exponential map, this defines
\begin{align*}
	L^* \ni \bff \mapsto (\exp_\trir(\bff)\otimes \id)\psi^\dagger \in \textnormal{End}(V).
\end{align*} 
Similar to Lemma \ref{lem:2.9} we have the following.
\begin{lemma}\label{lem:2.13}
	Under \eqref{fin04}, for all $\bff\in L^*$, $\psi(\exp_\trir (\bff))\in \textnormal{End}(V^*)$.
\end{lemma}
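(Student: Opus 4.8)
The plan is to follow the proof of Lemma~\ref{lem:2.9} almost verbatim, that lemma being the special case $\psi=\rho_\trir$, $V=L$. First I would recall from \eqref{exp01} the explicit series representation $\exp_\trir(\bff)=\sum_{I\in M(\indexset)}\bff^I A_I$, an element of $\R[\indexset]^*$. Since $\psi$ was extended from a Lie algebra morphism to an algebra morphism $\rmU(L,\trir)\to\textnormal{End}(V)$ by the universal property, I would apply it to this series term by term and write, purely formally at this stage,
\begin{align*}
	\psi(\exp_\trir(\bff))=\sum_{I\in M(\indexset)}\bff^I\,\psi(A_I).
\end{align*}
All the content of the lemma is that this a priori formal expression is in fact a genuine endomorphism of $V^*$.

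Next I would unfold it in coordinates: acting on a basis vector $v_j$ and collecting the contribution to each $v_{j'}$, one gets
\begin{align*}
	\psi(\exp_\trir(\bff))\,v_j=\sum_{j'\in\indexsettwo}\Big(\sum_{I\in M(\indexset)}\bff^I\,(\psi(A_I))_j^{j'}\Big)v_{j'}.
\end{align*}
The finiteness assumption \eqref{fin04} says precisely that, for each fixed $j$, only finitely many pairs $(j',I)$ give a nonzero $(\psi(A_I))_j^{j'}$; hence every coefficient of $v_{j'}$ on the right is a finite sum and only finitely many $v_{j'}$ occur, so the expression is well defined and $\psi(\exp_\trir(\bff))\in\textnormal{End}(V^*)$. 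As an alternative I would record the dual route: $\exp_\trir(\bff)$ is the multiplicative extension of $\bff$, i.e.\ a character of $\R[\indexset]$, so $\psi(\exp_\trir(\bff))$ can equally be obtained from the comodule map $\psi^\dagger$ of Lemma~\ref{lem:com01}, which is itself legitimate for exactly the same reason \eqref{fin04}.

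The only genuinely delicate point — and the one the lemma is really about — is this well-definedness: without a finiteness hypothesis the series $\sum_I\bff^I\psi(A_I)$ need not make sense in any space, since the supports of the operators $\psi(A_I)$ may overlap uncontrollably. This is the exact analogue of the passage in Lemma~\ref{lem:2.9} from the naive formal series to a bona fide operator on $L^*$ via \eqref{fin03}, and of the grading argument in Remark~\ref{rem:fin10} deducing \eqref{fin04} from its length-one form \eqref{fin05}. I do not expect any new idea to be needed beyond rewriting the algebra-morphism property of $\psi$ in coordinates, so I would keep the written proof as short as that of Lemma~\ref{lem:2.9}.
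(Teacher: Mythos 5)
Your proposal is correct and matches the paper's intent exactly: the paper gives no separate proof of Lemma \ref{lem:2.13}, presenting it as "similar to Lemma \ref{lem:2.9}", whose proof is precisely the formal expansion $\psi(\exp_\trir(\bff))=\sum_I \bff^I\psi(A_I)$ made rigorous by the finiteness property (here \eqref{fin04} in place of \eqref{fin03}). Your coordinate unfolding and the remark on the dual route via $\psi^\dagger$ add nothing beyond what the paper's one-line argument already contains, so no further comment is needed.
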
	
\section{Rough paths based on multi-indices}\label{sec::3}
In this section, we provide a description of integrals of smooth paths in terms of multi-indices, which will unveil the algebraic structure required to construct rough paths based on multi-indices.
\subsection{Multi-indices and integrals of smooth paths}
We follow the approach of \cite{OSSW}, later generalized in \cite{BL23}, and parameterize the space of jets (here \textit{controlled rough paths}, cf. \cite{Gub04}) describing solutions to \eqref{rde01} in terms of multi-indices corresponding to products of the derivatives of the nonlinear terms. What follows is a particular case of \cite{BL23}, with no polynomial contributions\footnote{ This should not be surprising to experts in rough paths and regularity structures, but let us give an argument on why it follows from \cite{BL23} that polynomials are not required. On the one hand, the space of solutions of \eqref{rde01} is parameterized by initial conditions, i.~e. constants, so only constant polynomials are required as a degree of freedom. On the other hand, the solution of \eqref{rde01} is expected to be a Hölder continuous function, so we can mod out constants as in \cite[(2.39)]{BL23} from all multi-indices except the purely polynomial. We completely remove the latter: This in the end is the reason why \eqref{rou03} below has an additional term with respect to the transformation rule \cite[(4.5)]{BL23}.}. 

\medskip

For the sake of this discussion, assume that $a_\mfl$ is analytic. We Taylor-expand it around the initial condition
\begin{align*}
	a_\mfl(y) = \sum_{k\in \N_0} \tfrac{1}{k!} \tfrac{d}{dy^k} a_\mfl(y_0) (y-y_0)^k.
\end{align*}
Rewriting \eqref{rde01} in integral form, we plug this expansion to the effect of
\begin{align}\label{mi04}
	Y_t - y_0 = \sum_{(\mfl,k)\in\mfL\times \N_0} \tfrac{1}{k!}\tfrac{d}{dy^k} a_\mfl(y_0) \int_0^t (Y_u - y_0)^k dX_u^\mfl.
\end{align}
We formally describe the solution $Y_t$ as a function of the coefficients $z_{(\mfl,k)} := \tfrac{1}{k!}\tfrac{d}{dy^k} a_\mfl(y_0)$, or rather as a Taylor-like formal power series
\begin{align}\label{mi02}
	Y_t - y_0 \approx \sum_{\beta\in M(\mfL\times \N_0)} \tfrac{1}{\beta!} \partial^\beta|_{z_{\mfl,k} = 0} (Y_t - y_0)  z^\beta,
\end{align}
where for a multi-index $\beta\in M(\mfL\times \N_0)$
\begin{equation*}
	z^\beta = \prod_{(\mfl,k)} z_{(\mfl,k)}^{\beta(\mfl,k)},\, \partial^\beta = \prod_\lk \partial_{z_\lk}^{\beta\lk},\, \beta! = \prod_\lk \beta\lk !.
\end{equation*}
Applying the Leibniz rule in \eqref{mi04}, the $\beta$-derivative satisfies the identity
\begin{align}
	&\tfrac{1}{\beta!} \partial^\beta|_{z_{\mfl,k} = 0} (Y_t - y_0)\nonumber \\
	&\quad= \sum_{\lk\in \mfL\times \N_0} \sum_{e_{(\mfl,k)} + \beta_1 + ... + \beta_k = \beta} \prod_{j=1}^k \big(\tfrac{1}{\beta_j!} \partial^{\beta_j}|_{z_{\mfl,k} = 0} (Y_u - y_0)\big) dX^\mfl_u,\label{mi05}
\end{align}
which is explicit since the r.~h.~s. depends on multi-indices of strictly smaller length (thanks to the contribution from $e_{(\mfl,k)}$).

\medskip

This expansion of course depends on the initial condition $y_0$ and the nonlinearities $\mathbf{a} = \{a_\mfl\}_{\mfl\in\mfL}$, so we now seek a parameterization for all possible inputs: We consider for each $(\mfl,k)\in\mfL\times \N_0$ the functional
\begin{align}\label{mi07}
	\z_{(\mfl,k)}[\mathbf{a},y] := \tfrac{1}{k!} \partial^k a_\mfl (y),
\end{align}
so we can express \eqref{mi04} as
\begin{align*}
	Y_t - y_0 = \sum_{(\mfl,k)} \z_{(\mfl,k)}[\mathbf{a}, y_0] \int_0^t (Y_u - y_0)^k dX_u^\mfl.
\end{align*}
Note that we may also express local increments of the solution: For $s<t$, we have
\begin{align*}
	Y_t - Y_s = \sum_{(\mfl,k)} \z_{(\mfl,k)}[\mathbf{a},Y_s] \int_s^t (Y_u - Y_s)^k dX^\mfl_u.
\end{align*}
For a fixed nonlinearity $\mathbf{a}$, this more general approach allows us to fix an arbitrary origin in the space of solutions (e.~g. at $0$) and express any $y$-evaluation of $\z_{(\mfl,k)}$ in terms of a shift in the nonlinearity:
\begin{align}\label{mi06}
	\z_{(\mfl,k)} [\mathbf{a},y] = \z_{(\mfl,k)}[\mathbf{a}(\cdot + y), 0].
\end{align}
Therefore, assuming we can express this shift as an algebraic transformation on $\z_{(\mfl,k)}$, we can still use the expansion \eqref{mi02} and the hierarchy \eqref{mi05} as an Ansatz. This will motivate our definition of rough path below.
\begin{lemma}
	Assume that, for all $\mfl\in\mfL$, $X^\mfl:[0,1]\to \R$ is smooth. There exists a unique Let $\mbfX : [0,1]^2 \to \R[[\mfL\times\N_0]]$ satisfying
	\begin{align}\label{rou01}
		\mbfX_{st\,\beta} = \sum_{(\mfl,k)} \sum_{e_{(\mfl,k)} + \beta_1 + ... + \beta_k = \beta} \int_s^t \mbfX_{su\,\beta_1}\cdots \mbfX_{su\,\beta_k} dX^\mfl_u.
	\end{align}
	Moreover, $\mbfX$ satisfies
	\begin{align}\label{rou10}
		\mbfX_{st,\beta} \not\equiv 0 \implies \left[ \beta \right] = 1,
	\end{align}	
	where
	\begin{align}\label{wei01}
		\left[ \beta \right] := \sum_{(\mfl,k)} (1-k)\beta(\mfl,k).
	\end{align}		
	\end{lemma}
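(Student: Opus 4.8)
The plan is to solve the hierarchy \eqref{rou01} recursively in the length $\length{\beta}$, exploiting the fact that it is \emph{triangular}: in every summand on the right-hand side the factor $e_{(\mfl,k)}$ contributes $1$ to the length, so in the decomposition $e_{(\mfl,k)}+\beta_1+\dots+\beta_k=\beta$ each $\beta_j$ satisfies $\length{\beta_j}<\length{\beta}$, exactly as in \eqref{mi05}. Hence \eqref{rou01} determines $\mbfX_{st,\beta}$ from the data $\{\mbfX_{su,\gamma}\}_{\length{\gamma}<\length{\beta}}$ only, and one sets up an induction on $N=\length{\beta}$ carrying along the statement ``$(s,t)\mapsto\mbfX_{st,\beta}$ is well-defined, jointly continuous on $[0,1]^2$, and vanishes identically unless $[\beta]=1$''. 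Since the target is a space of formal power series, no convergence in the variables $z_{(\mfl,k)}$ is ever at issue; everything is done coefficientwise.

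First I would treat the base case. For $\beta=0$ there is no admissible decomposition, since the left-hand side of $e_{(\mfl,k)}+\beta_1+\dots+\beta_k=0$ has length at least $1$; the empty sum forces $\mbfX_{st,0}\equiv 0$, which is consistent with $[0]=0\neq 1$ and also matches the absence of a polynomial sector (if all nonlinearities vanish, $Y\equiv y_0$). For the inductive step, assume $\mbfX_{su,\gamma}$ has been constructed with the stated properties for all $\length{\gamma}<N$ and fix $\beta$ with $\length{\beta}=N$. Since $X^\mfl$ is smooth, write $\int_s^t(\cdots)\,dX^\mfl_u=\int_s^t(\cdots)\dot X^\mfl_u\,du$; by the induction hypothesis the integrand $\mbfX_{su,\beta_1}\cdots\mbfX_{su,\beta_k}\dot X^\mfl_u$ is jointly continuous in $(s,u)$, so each summand of \eqref{rou01} is a well-defined, jointly continuous function of $(s,t)$, and so is their finite sum $\mbfX_{st,\beta}$. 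Uniqueness is immediate: any two solutions of \eqref{rou01} agree at length $0$ and, by the same recursion applied length by length, everywhere.

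It remains to establish \eqref{rou10}, which I would prove by the same induction on $\length{\beta}$, run simultaneously with the construction. The elementary input is that the weight $[\cdot]$ of \eqref{wei01} is additive, $[\beta'+\beta'']=[\beta']+[\beta'']$ (it is linear in $\beta$), with $[e_{(\mfl,k)}]=1-k$. Suppose $\mbfX_{st,\beta}\not\equiv 0$; then at least one summand in \eqref{rou01} is not identically zero, which forces the corresponding integrand, hence each factor $\mbfX_{\cdot\cdot,\beta_j}$, to be not identically zero on $[0,1]^2$. By the induction hypothesis $[\beta_j]=1$ for every $j=1,\dots,k$, so
\[
[\beta]=[e_{(\mfl,k)}]+\sum_{j=1}^k[\beta_j]=(1-k)+k=1,
\]
as claimed.

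There is no serious obstacle here: the whole content is the length-reducing structure of \eqref{rou01} and the linearity of $[\cdot]$. The only mild points of care are the bookkeeping — carrying joint continuity through the induction so that the integrals against the smooth $X^\mfl$ genuinely make sense — and the observation that the $\beta=0$ coefficient is $0$ rather than $1$ because the polynomial sector has been removed (cf.\ the footnote preceding the statement). If desired, one can additionally record that, for fixed $s$, $t\mapsto\mbfX_{st,\beta}$ is $C^1$ with $\partial_t\mbfX_{st,\beta}=\sum_{(\mfl,k)}\sum_{e_{(\mfl,k)}+\beta_1+\dots+\beta_k=\beta}\mbfX_{st,\beta_1}\cdots\mbfX_{st,\beta_k}\dot X^\mfl_t$, the differential form of \eqref{rou01}, which is the form actually used later.
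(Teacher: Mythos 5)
Your proof is correct and follows essentially the same route as the paper: the paper's proof simply observes that \eqref{rou01} is an explicit (length-triangular) hierarchy whose right-hand side makes classical sense for smooth $X^\mfl$, and that \eqref{rou10} follows by induction on $\length{\beta}$ (citing the analogous lemma in the reference); you have merely written out the induction and the additivity of $[\cdot]$ in detail.
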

\begin{proof}
	As in \eqref{mi05}, \eqref{rou01} defines a hierarchy of explicit equations. Since $X^\mfl$ is smooth, integration makes classical sense, so the r.~h.~s. is well-defined for every $\beta$. Property \eqref{rou10} follows by induction in the length of $\beta$ (cf. e.~g. \cite[Lemma 2.8]{BL23}).
\end{proof}		
\begin{definition}
\begin{align*}
	\populated &:= \{\beta\in M(\mfL\times\N_0)\,|\, \left[\beta\right] = 1\}\\
	T &:= \big\{\pi = \sum_{\beta\in\populated} \pi_\beta \z^\beta \in \R[\mfL\times\N_0]\big\},\\
	T^* &:= \big\{\pi = \sum_{\beta\in\populated} \pi_\beta \z^\beta \in \R[[\mfL\times\N_0]]\big\}.
\end{align*}
\end{definition}
Therefore,by \eqref{mi02}, a local expansion for $Y$ will take the form
\begin{equation}\label{expansion}
	Y_t - Y_s = \sum_{\beta\in\populated} \mbfX_{st\, \beta} \z^\beta \big[\mathbf{a}, Y_s\big].
\end{equation}
\medskip

The first step towards considering \eqref{mi06} as an algebraic transformation is to consider its infinitesimal generator
\begin{align*}
	\tfrac{d}{dh}\big|_{h=0} \z_{(\mfl,k)}[\mathbf{a},y + h] &= \tfrac{1}{k!} \tfrac{d}{dh}\big|_{h=0} \tfrac{d^k a}{dy^k}(y+h)\\
	&=\tfrac{1}{k!} \tfrac{d^{k+1} a}{dy^{k+1}}(y)\\  &=(k+1)\z_{(\mfl,k+1)}[\mathbf{a},y],
\end{align*}
which motivates
\begin{align}\label{defD}
	D := \sum_{(\mfl,k)} (k+1)\z_{(\mfl,k+1)}\partial_{\z_{(\mfl,k)}}\in \textnormal{Der}(\R[\mfL\times \N_0]).
\end{align}
\begin{lemma}
	$D$ $\in$ $\textnormal{Der}(\R[[\mfL\times \N_0]])$.
\end{lemma}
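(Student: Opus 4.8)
The statement asks us to extend $D$, which by \eqref{defD} is already a derivation of the polynomial algebra $\R[\mfL\times\N_0]$, to a derivation of its completion $\R[[\mfL\times\N_0]]$. The plan is to check two things: first, that for every formal power series $\pi$ the expression $D\pi$ is again a well-defined formal power series, i.e. that the infinite sum over $(\mfl,k)$ in \eqref{defD} is ``locally finite''; and second, that the Leibniz rule survives on $\R[[\mfL\times\N_0]]$. The first point carries all the content; the second is routine.

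\smallskip

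For the well-definedness, I would note first that each individual summand $(k+1)\,\z_{(\mfl,k+1)}\,\partial_{\z_{(\mfl,k)}}$ is an unproblematic operator on $\R[[\mfL\times\N_0]]$: a partial derivative with respect to one of the variables, and multiplication by a monomial, both make sense on formal power series. The only issue is the infinite sum. I would therefore fix a multi-index $\beta\in M(\mfL\times\N_0)$ and, writing $\pi=\sum_{\gamma}\pi_\gamma\z^\gamma$, observe that the coefficient of $\z^\beta$ in $\z_{(\mfl,k+1)}\partial_{\z_{(\mfl,k)}}\pi$ equals $\big(\beta(\mfl,k)+1\big)\,\pi_{\beta+e_{(\mfl,k)}-e_{(\mfl,k+1)}}$ when $\beta(\mfl,k+1)\ge 1$, and vanishes otherwise (the shift $\beta+e_{(\mfl,k)}-e_{(\mfl,k+1)}$ being a genuine multi-index precisely under that condition). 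Since $\beta$ has finite support, only finitely many pairs $(\mfl,k)$ meet $\beta(\mfl,k+1)\ge 1$, so $(D\pi)_\beta$ is a finite sum and $D\pi\in\R[[\mfL\times\N_0]]$. A more structural way to see the same thing, closer to the grading exploited in later sections: $D$ preserves the length $\length{\cdot}$ and lowers the weight $[\,\cdot\,]$ of \eqref{wei01} by one, and --- here one uses that $\mfL$ is finite --- the set of multi-indices of a prescribed length and weight is finite; hence only finitely many monomials feed into a given output monomial.

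\smallskip

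For the Leibniz rule, each fixed coefficient of $D(\pi\pi')$, $D\pi$ and $D\pi'$ depends on only finitely many coefficients of $\pi$ and $\pi'$ (again because $D$ preserves length and products respect it), so one may replace $\pi$ and $\pi'$ by polynomial truncations without affecting the relevant coefficients and fall back on the polynomial statement \eqref{defD}; equivalently, $D$ is continuous for the natural topology on $\R[[\mfL\times\N_0]]$ in which polynomials are dense, and the identity $D(\pi\pi')=(D\pi)\pi'+\pi(D\pi')$ extends by continuity. I do not anticipate a genuine obstacle here --- the only point requiring a little care is the index bookkeeping in the reindexing $\gamma\mapsto\beta=\gamma-e_{(\mfl,k)}+e_{(\mfl,k+1)}$ and the validity of the shifted multi-indices, which is exactly the role of the condition $\beta(\mfl,k+1)\ge 1$ above.
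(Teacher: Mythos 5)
Your proof is correct and follows essentially the same route as the paper: both arguments come down to the coefficient-wise finiteness property that for fixed $\beta$ only finitely many monomials $\z^\gamma$ contribute to $(D\pi)_\beta$, verified by computing the matrix entries $D_\beta^\gamma$ explicitly (your condition $\beta(\mfl,k+1)\ge 1$ is exactly the content of the Kronecker delta in \eqref{der01}). The paper leaves the Leibniz-rule extension and the alternative grading-based argument implicit, but your treatment of both is sound.
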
	
\begin{proof}
	This follows from the finiteness property
	\begin{align}\label{der02}
		\mbox{for all }\beta\in M(\mfL\times \N_0)\quad\#\{\gamma\in M(\mfL\times \N_0)\,|\, D_\beta^\gamma \neq 0\}<\infty.
	\end{align}
	This property is easy to check writing explicitly
	\begin{align}\label{der01}
		D_\beta^\gamma = \sum_{(\mfl,k)} (k+1)\gamma(\mfl,k) \delta_{\beta + e_{(\mfl,k)}}^{\gamma + e_{(\mfl,k+1)}}.
	\end{align}
\end{proof}	
However, $T$ is not a sub-algebra of $\R[\mfL\times\N_0]$, so it does not make sense to talk about the Leibniz rule there. Furthermore, $D\notin \textnormal{End}(T)$, since
\begin{align}\label{der11}
	D_\beta^\gamma \neq 0\,\implies\,[\beta] = [\gamma]- 1.
\end{align}
This property is interesting though, because it implies that $T$ has a vector field-type structure: Indeed, for every $\beta,\gamma \in \populated$,
\begin{align*}
	\z^\beta D \z^\gamma \in T,
\end{align*}
so we can retain the derivation property in terms of a pre-Lie product, which we still denote by $D$.
\begin{lemma}\label{lem:3.4}
	$(T, D)$ is a $1$-connected graded pre-Lie algebra under $\grad{\beta} = \length{\beta}$.
\end{lemma}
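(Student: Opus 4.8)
The plan is to check three things in turn: that the derivation $D$ of \eqref{defD} induces a well-defined bilinear map $D\colon T\times T\to T$, that this map obeys the pre-Lie identity \eqref{prelie}, and that it is graded and $1$-connected for $\grad{\beta}=\length{\beta}$. The conceptual ingredient is the elementary general fact that for \emph{any} commutative associative algebra $A$ equipped with a derivation $\mathfrak{d}$, the twisted product $a\trir b:=a\cdot\mathfrak{d}b$ is left pre-Lie: expanding the Leibniz rule gives $a\trir(b\trir c)-(a\trir b)\trir c=a\cdot\mathfrak{d}(b\cdot\mathfrak{d}c)-(a\cdot\mathfrak{d}b)\cdot\mathfrak{d}c=a\,b\cdot\mathfrak{d}^2c$, which is symmetric under $a\leftrightarrow b$ by commutativity, so \eqref{prelie} holds (one even gets the Novikov identity $(a\trir b)\trir c=(a\trir c)\trir b$). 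We apply this with $A=\R[\mfL\times\N_0]$ and $\mathfrak{d}=D$; the remaining work is to check that the relevant products stay inside $T$, which is \emph{not} a subalgebra of $\R[\mfL\times\N_0]$.

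For this closure statement I would use the weight $[\cdot]$ of \eqref{wei01}. Since $[\cdot]$ is additive over products of monomials, $T$ is precisely the subspace of elements of $\R[\mfL\times\N_0]$ that are $[\cdot]$-homogeneous of weight $1$. By \eqref{der11}, $D$ lowers the weight by one, so for $\pi'\in T$ the element $D\pi'$ is $[\cdot]$-homogeneous of weight $0$, and multiplying by $\pi\in T$ (weight $1$) yields $\pi\cdot D\pi'$ of weight $1$, hence in $T$. Thus $\pi\,D\,\pi':=\pi\cdot D\pi'$ is a well-defined bilinear map $T\times T\to T$ --- this is the ``derivation property in pre-Lie form'' mentioned just before the statement, and under the identification of $\pi\in T$ with the derivation $\pi D$ of $\R[\mfL\times\N_0]$ it is the vector-field product $(\pi D)(\pi'D)=(\pi D[\pi'])\,D$. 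The pre-Lie identity for $(T,D)$ is then immediate from the first paragraph: \eqref{prelie} is an identity valid for all $a,b,c\in\R[\mfL\times\N_0]$, and once $a,b,c\in T$ every term appearing in it lies in $T$ by the closure just established.

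It remains to treat the grading. The derivation $D$ of \eqref{defD} turns a factor $\z_{(\mfl,k)}$ into $\z_{(\mfl,k+1)}$, hence preserves $\length{\cdot}$, and multiplication adds lengths; so, writing $T_n:=\lspan\{\z^\beta:\beta\in\populated,\ \length{\beta}=n\}$, we get $T_n\,D\,T_m\subset T_{n+m}$, i.e.\ $(T,D)$ is graded by $\grad{\beta}=\length{\beta}$ with grading set $\mathsf{K}\subset\N$. For $1$-connectedness: the only length-$0$ multi-index is $\beta=0$, and $[0]=0\neq1$, so $0\notin\populated$, whereas $e_{(\mfl,0)}\in\populated$ for every $\mfl\in\mfL$ has length $1$; hence $\min\{\length{\beta}:\beta\in\populated\}=1$, so $\mathsf{K}$ has minimum $1$.

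I do not expect a genuine obstacle in this proof. The only point needing attention is precisely that $T$ is not closed under the commutative product of $\R[\mfL\times\N_0]$, which is why one has to pass through the weight $[\cdot]$ and property \eqref{der11} to ensure that the pre-Lie product, and the associator in \eqref{prelie}, remain inside $T$; the rest (the pre-Lie identity from the Leibniz rule, preservation of length, connectedness) is routine.
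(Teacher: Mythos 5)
Your proof is correct and follows essentially the same route as the paper: the paper establishes closure of $\pi\,D\,\pi'$ in $T$ via the weight identity \eqref{der11} and invokes the vector-field structure for the pre-Lie identity in the discussion immediately preceding the lemma, so its proof proper only checks the grading via \eqref{der12} and $\length{\beta}\geq 1$ on $\populated$, exactly as you do. You merely spell out the Leibniz-rule verification of \eqref{prelie} (and the Novikov identity) that the paper leaves implicit.
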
	
\begin{proof}
	It only remains to show the grading properties. Note that \eqref{der01} implies that
	\begin{align}\label{der12bis}
		D_\beta^\gamma \neq 0\,\implies\,\sum_{k\in\N_0} \beta(\mfl,k)= \sum_{k\in \N_0} \gamma(\mfl,k)\,\mbox{for all }\mfl\in\mfL,
	\end{align}
	and consequently
	\begin{align}\label{der12}
		D_\beta^\gamma \neq 0\,\implies\,\length{\beta} = \length{\gamma}.
	\end{align}
	Since the length is additive under multiplication, we have that $(T,D)$ is graded under $\length{\cdot}$. Since in addition $\length{\beta}\geq 1$ for all $\beta\in \populated$, the statement follows.	
\end{proof}	

\medskip

In a private communication, Dominique Manchon conjectured that $(T,D)$ is the free (left-)Novikov algebra over $\mfL$, which we will denote by $\mcN(\mfL)$, cf. \cite[Section 7]{DL02}. A Novikov algebra\footnote{Novikov algebras first appeared in the study of Hamiltonian operators of hydrodynamic type, cf. \cite{GD79} and \cite{DN83,BN85}.} is a pre-Lie algebra $(L,\triangleright)$ which satisfies for all $a,b,c$ $\in$ $L$
	\begin{equation}\label{nov01}
		(a\triangleright b) \triangleright c = (a \triangleright c) \triangleright b.
	\end{equation}
Identity \eqref{nov01} in $(T,D)$ is an easy consequence of the commutativity of the product of $\R[\mfL\times \N_0]$, so $(T,D)$ is Novikov. Although we will not make use of this structure, we now show that $(T,D)$ is indeed the free Novikov algebra.
\begin{lemma}\label{lem:nov01}
	$(T,D)$ is isomorphic to $\mcN(\mfL)$. 
\end{lemma}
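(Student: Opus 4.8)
The statement to prove is that $(T,D)$ is isomorphic to the free (left-)Novikov algebra $\mcN(\mfL)$ over $\mfL$. My plan is to exhibit explicit mutually inverse morphisms, using the description of the free Novikov algebra from \cite[Section 7]{DL02}. Recall that $\mcN(\mfL)$ admits a concrete realization: as a vector space it is spanned by ``Novikov words'' or, in the Dzhumadildaev--Löfwall description, by certain expressions of the form $\mfl_0\, x_1^{k_1}\cdots$ built from the generators $\{\mfl\}_{\mfl\in\mfL}$, with the product dictated by \eqref{nov01} and the pre-Lie identity \eqref{prelie}. The key structural fact is that in a free Novikov algebra every element is a linear combination of left-normed products of generators, and the Novikov relation \eqref{nov01} says that in such a left-normed product $(\cdots((\mfl_0\triangleright \mfl_{i_1})\triangleright \mfl_{i_2})\cdots)\triangleright \mfl_{i_n}$ the order of $\mfl_{i_1},\dots,\mfl_{i_n}$ is irrelevant — only the multiset $\{\mfl_{i_1},\dots,\mfl_{i_n}\}$ and the distinguished ``head'' $\mfl_0$ matter. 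This gives a basis of $\mcN(\mfL)$ indexed by pairs (head $\mfl_0\in\mfL$, multiset of elements of $\mfL$).

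First I would set up the candidate isomorphism $\Phi : \mcN(\mfL)\to T$. On generators I set $\Phi(\mfl) := \z_{(\mfl,0)}$ and extend by requiring $\Phi$ to be an algebra morphism for the pre-Lie products, $\Phi(a\triangleright b) = \Phi(a)\, D\, \Phi(b)$; I must check this is well-defined, i.e. that it descends through the Novikov relations, which holds because $(T,D)$ is itself Novikov (already established just above the statement). Then I would compute $\Phi$ on the left-normed basis: applying $D$ repeatedly to $\z_{(\mfl,0)}$ raises the second index, so $\Phi\big((\cdots(\mfl_0\triangleright\mfl_1)\cdots)\triangleright\mfl_n\big)$ is, up to a combinatorial constant, the monomial $\z_{(\mfl_0,k_0)}\prod \z_{(\mfl_j, k_j)}$ where the pattern of second indices records ``how many derivatives were applied where''. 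The heart of the argument is a bookkeeping lemma: iterating the derivation $D = \sum_{(\mfl,k)} (k+1)\z_{(\mfl,k+1)}\partial_{\z_{(\mfl,k)}}$ on a product of $\z_{(\mfl,0)}$'s produces exactly a spanning set of $T$ — indeed every $\beta\in\populated$ arises, since $[\beta]=1$ means the total number of ``raises'' equals (number of factors) $-1$, which matches the number of $D$-applications in a left-normed product of that many generators. I would then argue $\Phi$ is surjective by this counting, and injective by a dimension/grading comparison: both sides are graded by length (Lemma \ref{lem:3.4} for $T$; the analogous grading on $\mcN(\mfL)$ by word length), and in each fixed degree the left-normed basis of $\mcN(\mfL)$ and the monomial basis $\{\z^\beta : \beta\in\populated\}$ of $T$ have the same cardinality — both counted by (choice of head in $\mfL$) $\times$ (multiset of the remaining factors). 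An inverse $\Psi : T \to \mcN(\mfL)$ can be written down directly on monomials $\z^\beta$ using this correspondence, and one checks $\Psi\circ\Phi = \id$, $\Phi\circ\Psi = \id$ on the respective bases.

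The main obstacle I anticipate is the combinatorics of matching the two bases precisely, including the rational coefficients: applying $D$ to $\z_{(\mfl,k)}$ carries the factor $(k+1)$, so iterated products pick up products of such factors, and one must verify these are exactly absorbed by the multi-index factorials $\beta!$ appearing in the normalization, so that the map is an isomorphism over $\R$ (not merely over $\Q$ with denominators, though that would also suffice here). A clean way to handle this is to work with the rescaled basis $\tfrac{1}{\beta!}\z^\beta$ of $T$ and the corresponding ``divided-power'' normalization of left-normed Novikov monomials; alternatively one invokes the universal property of $\mcN(\mfL)$ directly — since $(T,D)$ is a Novikov algebra generated (as a Novikov algebra) by $\{\z_{(\mfl,0)}\}_{\mfl\in\mfL}$, there is a unique Novikov morphism $\mcN(\mfL)\to T$ sending generators to generators, and it remains only to check it is bijective, which reduces to the generation claim (surjectivity) plus the degreewise dimension count (injectivity). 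I would phrase the final proof around this universal property to minimize explicit coefficient chasing, relegating the generation-and-counting step to the induction on length already used in Lemma \ref{lem:3.4} and in \cite[Lemma 2.8]{BL23}.
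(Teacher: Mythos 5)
Your overall strategy (universal property of $\mcN(\mfL)$, then surjectivity by generation and injectivity by a degreewise dimension count) is reasonable in outline, but the key structural input you rely on is false, and this breaks the injectivity step. It is not true that the free Novikov algebra is spanned by left-normed products of generators, nor that its degree-$n$ homogeneous component has a basis indexed by a head in $\mfL$ together with a multiset of $n-1$ further letters. Already for a single letter $\mfl$ and $n=3$ that component is two-dimensional: the images of $(\mfl\trir \mfl)\trir \mfl$ and $\mfl\trir(\mfl\trir \mfl)$ under the canonical Novikov morphism to $(T,D)$ are
\begin{equation*}
	(\z_{(\mfl,0)}D\z_{(\mfl,0)})D\z_{(\mfl,0)} = \z_{(\mfl,0)}\z_{(\mfl,1)}^2,\qquad \z_{(\mfl,0)}D(\z_{(\mfl,0)}D\z_{(\mfl,0)}) = \z_{(\mfl,0)}\z_{(\mfl,1)}^2 + 2\z_{(\mfl,0)}^2\z_{(\mfl,2)},
\end{equation*}
which are linearly independent, whereas your head-plus-multiset count gives dimension one. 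In general the degree-$n$ component of the undecorated free Novikov algebra has dimension $p(n-1)$, the number of partitions of $n-1$ --- exactly the count of multi-indices $\beta\in\populated$ with $\length{\beta}=n$ noted after Lemma \ref{lem::tree1} --- not $1$.

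The consequence is that your dimension comparison proves nothing, since you would be comparing $\dim T_n$ against a number that is too small. The universal property does give a unique Novikov morphism $\mcN(\mfL)\to(T,D)$ sending $\mfl$ to $\z_{(\mfl,0)}$, and surjectivity can indeed be obtained by showing that the $\z_{(\mfl,0)}$ generate $(T,D)$ (for instance via the tree correspondence of Section \ref{sec::5}, since ${\rm T}_\beta\neq\emptyset$ for every $\beta\in\populated$). But injectivity then requires knowing $\dim\mcN(\mfL)_n$, and that computation is precisely the content of \cite[Theorem 7.8]{DL02}. The paper's proof sidesteps the issue: it identifies $(T,D)$ verbatim with the explicit model $(\mcN\mcP_0(\mfL),\circ)$ of \cite[Section 7]{DL02} --- matching $k!\,\z_{(\mfl,k)}$ with the symbol $\mfl[k-1]$, $D$ with the derivation $\partial$, and the weight with $-[\cdot]$ --- and then quotes \cite[Theorem 7.8]{DL02}, which asserts that this model \emph{is} the free Novikov algebra. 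To make your route self-contained you would have to reprove that theorem, i.e. exhibit a correct basis of $\mcN(\mfL)$; the left-normed monomials are not one.
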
	
\begin{proof}
	We follow \cite[Section 7]{DL02}. We first note that $\mcN\mcP(\mfL)$, cf. \cite[Definition 7.7]{DL02}, is equal to the commutative polynomial algebra $\R[\mfL\times\N_0]$ if one identifies $k! \z_{(\mfl,k)}$ with the symbol $\mfl[k-1]$ in \cite[Example 7.6]{DL02}. A monomial $\z^\beta$, $\beta\in M(\mfL\times \N_0)$, is what we call a \textit{Novikov element}, cf. \cite[Definition 7.2]{DL02}. The \textit{weight} exactly corresponds to $-[\beta]$ in \eqref{wei01}, and therefore $\mcN \mcP_0 (\mfL)$ in \cite[p.~180]{DL02} corresponds to $T$. The derivation $\partial$ defined in \cite[Definition 7.7]{DL02} by $\partial (\mfl[k-1]) = \mfl[k]$ is precisely $D$ in \eqref{defD}, since $D(k! \z_{(\mfl,k)})$ $=$ $(k+1)! \z_{(\mfl,k+1)}$. Denoting by $\circ$ the pre-Lie product $p\circ q = p(\partial q)$, $(T,D)$ is therefore isomorphic to $(\mcN \mcP_0 (\mfL), \circ)$, and the latter, by \cite[Theorem 7.8]{DL02}, is isomorphic to the free Novikov algebra $\mcN(\mfL)$.
\end{proof}	

\medskip
 	
Lemma \ref{lem:3.4}, together with the finiteness property \eqref{der02}, allows us to perform the Guin-Oudom procedure (Subsection \ref{subsec::2.1}) and the transposition (Subsection \ref{subsec::2.2}), leading to the group $\mathcal{G}(T,D)$ of multiplicative functionals of $\R[\populated]$ as described in Subsection \ref{subsec::2.3}. We will see our rough paths as elements of $T^*$, extended multiplicatively via $\exp_{D}$ to the group $\mathcal{G}(T,D)$. The main goal now is to show that \eqref{rou01}, in the smooth case, does indeed define an algebraic smooth rough path; for this we need the composition rule
\begin{align}\label{chen01}
	\exp_D (\mbfX_{sr})* \exp_D (\mbfX_{rt}) = \exp_D (\mbfX_{st}),
\end{align}
also known as Chen's relation. By \eqref{com01}, \eqref{chen01} is reduced to
\begin{align}\label{rou03}
	\mbfX_{st} = \mbfX_{sr} + \rho_D\big(\exp_D(\mbfX_{sr})\big)\mbfX_{rt}.
\end{align}
Let us first write an equivalent formulation of \eqref{rou01} in terms of exponential maps (cf. \cite[(3.50)]{BL23}).
\begin{lemma}\label{lem:3.5}\eqref{rou01} is equivalent to 
	\begin{align}\label{rou02}
		\mbfX_{st} = \sum_{\mfl\in\mfL} \int_s^t \rho_D\big(\exp_{D}(\mbfX_{su})\big)\z_{(\mfl,0)} dX_u^\mfl.
	\end{align}	
\end{lemma}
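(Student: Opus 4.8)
The plan is to unbundle the operator identity \eqref{rou02} into $\z$-coordinates and recognize \eqref{rou01}; the bridge between the two is a purely algebraic identity for the pre-Lie action of $\exp_D$ on the distinguished generators $\z_{(\mfl,0)}$. Since $\mbfX_{su}\in T^*\subset\R[[\mfL\times\N_0]]$, the $\beta$-coordinate of the $k$-th power $\mbfX_{su}^{k}$ computed in $\R[[\mfL\times\N_0]]$ is $\sum_{\beta_1+\cdots+\beta_k=\beta}\mbfX_{su\,\beta_1}\cdots\mbfX_{su\,\beta_k}$, and multiplication by $\z_{(\mfl,k)}$ shifts the index by $e_{(\mfl,k)}$. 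By \eqref{rou10} every $\beta_j$ occurring has $\length{\beta_j}\ge1$, so for a fixed $\beta$ only finitely many $k$ and decompositions contribute; hence $\sum_{k\ge0}\z_{(\mfl,k)}\,\mbfX_{su}^{k}$ is a well-defined element of $\R[[\mfL\times\N_0]]$ and, comparing coordinates, \eqref{rou01} is \emph{equivalent} to $\mbfX_{st}=\sum_{\mfl}\int_s^t\big(\sum_{k\ge0}\z_{(\mfl,k)}\,\mbfX_{su}^{k}\big)\,dX_u^\mfl$. In view of this, the Lemma reduces to the algebraic identity
\begin{equation}
\rho_D\big(\exp_D(\bff)\big)\,\z_{(\mfl,0)}\;=\;\sum_{k\ge0}\z_{(\mfl,k)}\,\bff^{k}\qquad\text{for every }\bff\in T^{*},\tag{$\star$}
\end{equation}
after which substituting $\bff=\mbfX_{su}$ and integrating gives both implications.

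To prove $(\star)$ I would expand $\exp_D(\bff)=\sum_{I\in M(\populated)}\bff^I A_I$ as in \eqref{exp01} --- which, together with Lemma \ref{lem:2.9}, makes $\rho_D(\exp_D(\bff))$ a well-defined endomorphism --- and establish the ``collapse'' identity, valid in $\R[[\mfL\times\N_0]]$, to which $\rho_D(A_I)$ extends as an operator since $D\in\textnormal{Der}(\R[[\mfL\times\N_0]])$,
\[
\rho_D(A_I)\,\z_{(\mfl,0)}\;=\;\frac{\length{I}!}{I!}\,\Big(\prod_{\gamma\in\populated}(\z^\gamma)^{I(\gamma)}\Big)\,\z_{(\mfl,\length{I})}.
\]
Granting this, $\sum_I\bff^I\rho_D(A_I)\z_{(\mfl,0)}=\sum_{k\ge0}\z_{(\mfl,k)}\sum_{\length{I}=k}\tfrac{k!}{I!}\bff^I\prod_\gamma(\z^\gamma)^{I(\gamma)}$, and by the multinomial theorem the inner sum equals $\big(\sum_\gamma\bff_\gamma\z^\gamma\big)^{k}$, which is $\bff^{k}$ because $\sum_\gamma\bff_\gamma\z^\gamma$ is precisely $\bff$ viewed in $\R[[\mfL\times\N_0]]$ (cf. \eqref{exp20}); this is $(\star)$.

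The collapse identity I would prove by induction on $\length{I}$. The base case is $\rho_D(1)=\id$. For the inductive step, fix $\gamma$ with $I(\gamma)\ge1$ and write $A_I=\tfrac1{I(\gamma)}\big(A_{I-e_\gamma}\pprod_D\z^\gamma\big)$, a direct consequence of \eqref{go30} and \eqref{go06}; use \eqref{go07} together with \eqref{go11} to rewrite $A_{I-e_\gamma}\pprod_D\z^\gamma$ in terms of Guin--Oudom elements of strictly smaller length; and apply the coproduct form of the Leibniz rule for the action, $\rho_D(U)(pq)=\sum_{(U)}\big(\rho_D(U_{(1)})p\big)\big(\rho_D(U_{(2)})q\big)$ --- which one checks by induction on word length from $\cop\z^\gamma=\z^\gamma\otimes1+1\otimes\z^\gamma$ and the derivation property of $D$, cf. \eqref{defD} --- to reduce the right-hand side to the induction hypothesis. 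The Leibniz cross-terms cancel precisely because $\z_{(\mfl,0)}$ is a single generator, on which $D$ acts by \eqref{der01} as $D\z_{(\mfl,k)}=(k+1)\z_{(\mfl,k+1)}$; concretely one carries along the harmless strengthening of the collapse identity with $\z_{(\mfl,0)}$ replaced by $\z_{(\mfl,k)}$, which the same induction delivers.

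The work is concentrated in this last step: the bookkeeping of the multi-index factorials generated by iterating \eqref{go30}, \eqref{go06}, and the verification that the Leibniz cross-terms cancel. Conceptually $(\star)$ says that $D$ is the infinitesimal generator of the Taylor shift $\mathbf a\mapsto\mathbf a(\cdot+y)$ --- the computation preceding \eqref{defD} --- so that $\rho_D(\exp_D(\bff))$ re-expands the jet $(\z_{(\mfl,k)})_k$ of $a_\mfl$ around the point encoded by $\bff$; but making this rigorous is exactly the inductive computation above. A variant trading the factorials for a slightly different induction is to start from the resummed form \eqref{formal}, $\exp_D(\bff)=\sum_{l\ge0}\tfrac1{l!}\bff^{\pprod_D l}$, and prove $\rho_D(\bff^{\pprod_D l})\z_{(\mfl,0)}=l!\,\z_{(\mfl,l)}\,\bff^{l}$ by induction on $l$, then divide by $l!$ and sum over $l$.
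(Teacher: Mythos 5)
Your proposal is correct in substance and its overall architecture coincides with the paper's: both proofs reduce the lemma to the single algebraic identity $\rho_D(\exp_D(\bff))\z_{(\mfl,0)}=\sum_{k\geq 0}\z_{(\mfl,k)}\bff^k$ (your $(\star)$ is exactly the paper's \eqref{com31}), after which matching $\beta$-coordinates recovers \eqref{rou01}; your preliminary finiteness remark via \eqref{rou10} is also sound. Where you diverge is in how the collapse is established. You work directly with the Guin--Oudom elements $A_I$ and induct on $\length{I}$, which forces you to carry the multinomial factors $\length{I}!/I!$, to invoke \eqref{go07} backwards to trade $\pprod_D$ for concatenation, and to verify a cancellation of Leibniz cross-terms that you explicitly flag as ``where the work is concentrated'' but do not carry out; note in particular that those cross-terms involve $\rho_D(A_{J'})\z^\gamma$ for general monomials $\z^\gamma$, which is not covered by your strengthened hypothesis (with $\z_{(\mfl,0)}$ replaced by $\z_{(\mfl,k)}$), so the cancellation genuinely has to be checked against the correction terms coming from \eqref{go07}. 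The paper avoids all of this by proving the single clean operator identity $\rho_D(U\pprod_D\z^\beta)=\z^\beta\,\rho_D(U)\,D$ by a two-line induction on the concatenation length of $U$ (using only \eqref{go06}, the Leibniz rule for $D$ and the algebra morphism property of $\rho_D$); iterating gives \eqref{goD}, i.e.\ $\rho_D(\z^{\beta_1}\pprod_D\cdots\pprod_D\z^{\beta_k})=\z^{\beta_1}\cdots\z^{\beta_k}D^k$, and evaluating at $\z_{(\mfl,0)}$ with $D^k\z_{(\mfl,0)}=k!\,\z_{(\mfl,k)}$ yields \eqref{com31} with no factorial bookkeeping and no cancellation to check. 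The ``variant'' you mention at the very end --- proving $\rho_D(\bff^{\pprod_D l})\z_{(\mfl,0)}=l!\,\z_{(\mfl,l)}\bff^l$ by induction on $l$ starting from \eqref{formal} --- is essentially the paper's argument, and I would recommend promoting it to the main line of your proof.
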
	
\begin{proof}
	We first claim that, as endomorphisms, for every $\beta_1,...,\beta_k\in\populated$
	\begin{align}\label{goD}
		\rho_D (\z^{\beta_1}\pprod_D ... \pprod_D \z^{\beta_k}) = \z^{\beta_1}\cdots\z^{\beta_k} D^k.
	\end{align}
	This in turn follows from the following identity, again as endomorphisms: For $U\in \rmU(T,D)$ and $\beta\in \populated$,
	\begin{equation*}
		\rho_D (U\pprod_D \z^\beta) = \z^\beta \rho_D (U) D.
	\end{equation*}
	The latter is shown inductively in $U$; for $U=1$ it is trivial, whereas assuming it for $U$ and given an arbitrary element $b\in T$ we have from \eqref{go06} and the algebra morphism property of $\rho_D$
	\begin{align*}
		\rho_D(bU\pprod_D \z^\beta) &= \rho_D(b)\z^\beta \rho_D(U) D - (bD\z^\beta) \rho_D (U) D \\
		& = \z^\beta \rho_D(b)\rho_D(U) D\\
		& = \z^\beta \rho_D(bU) D.
	\end{align*}
	As a consequence of \eqref{goD} via definition \eqref{defD}, it holds
	\begin{align*}
		\tfrac{1}{k!}\rho_D (\z^{\beta_1}\pprod_D ... \pprod_D \z^{\beta_k})\z_{(\mfl,0)} = \z^{\beta_1}\cdots \z^{\beta_k} \z_{(\mfl,k)}.
	\end{align*}
	Combining this with the representation \eqref{formal},
	\begin{align}\label{com31}
	\rho_D\big(\exp_{D}(\mbfX_{su})\big)\z_{(\mfl,0)} = \sum_{k\geq 0} \sum_{\beta_1,...,\beta_{k}\in\populated} \mbfX_{su\,\beta_1}\cdots \mbfX_{su\,\beta_k} \z^{\beta_1}\cdots \z^{\beta_k} \z_{(\mfl,k)}.
	\end{align}
	The $\beta$-contribution of this sum gives the expression on the r.~h.~s. of \eqref{rou01}.
\end{proof}	
\begin{lemma}\label{lem:3.7}
	$\mbfX : [0,1]^2 \to T^*$ defined via \eqref{rou02} satisfies \eqref{rou03}.
\end{lemma}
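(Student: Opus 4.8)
The plan is to read off Chen's relation \eqref{rou03} from the flow equation \eqref{rou02} by splitting the integral at $r$ and then appealing to uniqueness. Fix $s\le r\le t$. First I would introduce, for $u\in[r,t]$,
\[
  Z_{ru}:=\rho_D\big(\exp_D(\mbfX_{sr})\big)^{-1}\big(\mbfX_{su}-\mbfX_{sr}\big)\in T^*,
\]
where $\rho_D(\exp_D(\mbfX_{sr}))\in\textnormal{End}(T^*)$ makes sense and is invertible, with inverse $\rho_D(\exp_D(\mbfX_{sr})^{-1})$, by \eqref{com11} together with $\rho_D(1)=\id$ and the fact that $\exp_D$ is a bijection onto $\mcG(T,D)$; moreover $Z_{ru}$ genuinely lies in $T^*$ (not merely in the unfiltered completion), since $\rho_D(\exp_D(\mbfX_{sr}))^{\pm1}$ preserve $T^*$, and $u\mapsto Z_{ru}$ is continuous because $u\mapsto\mbfX_{su}$ is. By the Baker--Campbell--Hausdorff identity \eqref{com01}, $Z_{ru}$ is precisely the element for which $\exp_D(\mbfX_{sr})*\exp_D(Z_{ru})=\exp_D(\mbfX_{su})$, and \eqref{rou03} is equivalent to the single statement $Z_{rt}=\mbfX_{rt}$.

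The key step is to check that $t\mapsto Z_{rt}$ solves \eqref{rou02} based at $r$. Starting from \eqref{rou02} for $\mbfX_{st}$ and splitting $\int_s^t=\int_s^r+\int_r^t$, the first piece equals $\mbfX_{sr}$ (again by \eqref{rou02}). For the second piece I would use the defining property of $Z$ and \eqref{com11} to factor $\rho_D(\exp_D(\mbfX_{su}))=\rho_D(\exp_D(\mbfX_{sr}))\,\rho_D(\exp_D(Z_{ru}))$ for $u\in[r,t]$, and pull the $u$-independent endomorphism $\rho_D(\exp_D(\mbfX_{sr}))$ out of the (componentwise, $T^*$-valued) integral, obtaining
\[
  \mbfX_{st}=\mbfX_{sr}+\rho_D\big(\exp_D(\mbfX_{sr})\big)\sum_{\mfl\in\mfL}\int_r^t\rho_D\big(\exp_D(Z_{ru})\big)\z_{(\mfl,0)}\,dX_u^\mfl.
\]
Comparing with $\mbfX_{st}=\mbfX_{sr}+\rho_D(\exp_D(\mbfX_{sr}))Z_{rt}$ and cancelling the invertible endomorphism gives $Z_{rt}=\sum_{\mfl\in\mfL}\int_r^t\rho_D(\exp_D(Z_{ru}))\z_{(\mfl,0)}\,dX_u^\mfl$, i.e. exactly \eqref{rou02} with $s$ replaced by $r$.

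To conclude, note that $t\mapsto\mbfX_{rt}$ solves this same equation, since $\mbfX$ satisfies \eqref{rou02} for every pair of times; and that integral equation, being equivalent (Lemma \ref{lem:3.5}) to the explicit hierarchy \eqref{rou01} based at $r$, has at most one $T^*$-valued solution, for the same reason as in the existence and uniqueness lemma above, namely that \eqref{rou01} is strictly triangular in $\length{\beta}$. Hence $Z_{rt}=\mbfX_{rt}$, which is \eqref{rou03}. I expect no essential obstacle here: the only points needing care are the legitimacy of the endomorphism bookkeeping — that $\rho_D(\exp_D(\mbfX_{sr}))$ is a well-defined invertible endomorphism of the completed space $T^*$ and commutes with the integral — and these follow from the strong finiteness property \eqref{fin03} for $(T,D)$ (available since $(T,D)$ is $1$-connected by Lemma \ref{lem:3.4} and \eqref{der02} holds) together with the fact that $\rho_D$ respects the length grading.
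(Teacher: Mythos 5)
Your argument is correct, but it is organized differently from the paper's. Both proofs start the same way, by splitting $\int_s^t=\int_s^r+\int_r^t$ in \eqref{rou02}, identifying the first piece as $\mbfX_{sr}$, and invoking the multiplicativity \eqref{com11} of $\rho_D\circ\exp_D$ under convolution. From there you run the classical ``uniqueness of the flow implies the cocycle property'' argument: you invert $\rho_D(\exp_D(\mbfX_{sr}))$, isolate the defect $Z_{rt}$, show it solves \eqref{rou02} based at $r$, and conclude $Z_{rt}=\mbfX_{rt}$ from uniqueness of the solution to the strictly triangular hierarchy \eqref{rou01}. The paper never inverts anything and never appeals to uniqueness of the flow: it reduces \eqref{rou03} to the pointwise equality of integrands
\begin{equation*}
	\rho_D\big(\exp_D(\mbfX_{sr})\big)\rho_D\big(\exp_D(\mbfX_{ru})\big)\z_{(\mfl,0)}=\rho_D\big(\exp_D(\mbfX_{su})\big)\z_{(\mfl,0)},
\end{equation*}
rewrites the left-hand side via \eqref{com11} and \eqref{com01} as $\rho_D(\exp_D(\mbfX_{sr}+\rho_D(\exp_D(\mbfX_{sr}))\mbfX_{ru}))\z_{(\mfl,0)}$, and closes the resulting apparent circularity by a direct induction on $\length{\beta}$, using that $\big(\rho_D(\exp_D(\bff))\z_{(\mfl,0)}\big)_\beta$ depends only on $\bff_{\beta'}$ with $\length{\beta'}\leq\length{\beta}-1$. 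The underlying mechanism (strict triangularity in $\length{\cdot}$) is the same in both cases; your version is arguably more conceptual and reuses the existence-and-uniqueness lemma as a black box, at the cost of the extra bookkeeping you correctly flag (invertibility of $\rho_D(\exp_D(\mbfX_{sr}))$ on $T^*$ via surjectivity of $\exp_D$ onto $\mcG(T,D)$, stability of $T^*$, and commuting the constant endomorphism with the componentwise integral, all of which are justified by \eqref{fin03}), while the paper's induction is entirely inversion-free.
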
	
\begin{proof}
	We write \eqref{rou02} as
	\begin{align*}
		\mbfX_{st} = \mbfX_{sr} + \sum_{\mfl\in\mfL} \int_r^t \rho_D\big(\exp_{D}(\mbfX_{su})\big)\z_{(\mfl,0)} dX_u^\mfl.
	\end{align*}
	Therefore, \eqref{rou03} follows from
	\begin{align*}
		\rho_D\big(\exp_D(\mbfX_{sr})\big)\rho_D\big(\exp_{D}(\mbfX_{ru})\big)\z_{(\mfl,0)}
		=  \rho_D\big(\exp_{D}(\mbfX_{su})\big)\z_{(\mfl,0)}.
	\end{align*}
	In turn, \eqref{com11} reduces this expression to
	\begin{align*}
		\rho_D\big(\exp_D(\mbfX_{sr})*\exp_{D}(\mbfX_{ru})\big)\z_{(\mfl,0)}
		= \rho_D\big(\exp_{D}(\mbfX_{su})\big)\z_{(\mfl,0)},
	\end{align*}
	which by \eqref{com01} further reduces to
	\begin{align}
		\rho_D\big(\exp_D\big(\mbfX_{sr} + \exp_{D}(\mbfX_{sr})\mbfX_{ru}\big)\big)\z_{(\mfl,0)}=   \rho_D\big(\exp_{D}(\mbfX_{su})\big)\z_{(\mfl,0)}.\label{com21}
	\end{align}
	By the previous proof and the graded structure, we note that $\big(\rho_D\big(\exp_{D}(\bff)\big)\z_{(\mfl,0)}\big)_\beta$ only depends on $\bff_{\beta'}$ with $\length{\beta'} \leq \length{\beta} - 1$ (since $\length{e_{(\mfl,0)}} = 1$). Therefore, \eqref{com21} for a fixed $\beta$ follows by induction in  $\length{\cdot}$. The base case $\length{\beta} = 1$ is trivial since \eqref{com31} implies $\big(\rho_D (\exp_D (\bff))\z_{(\mfl,0)}\big)_{e_{(\mfl',0)}}$ $=$ $\delta_\mfl^{\mfl'}$, and thus the $e_{(\mfl,0)}$-projection of \eqref{com21} is simply
	\begin{align*}
	X_t^\mfl - X_r^\mfl = X_t^\mfl - X_r^\mfl.
	\end{align*}
 \end{proof}	
\begin{remark}\label{rem:srp01}
	Defining the set of iterated integrals as the solution to a differential equation on a Lie algebra/group is not a novelty, and can be traced back to signatures in the geometric case\footnote{ i.~e. when integration by parts holds.} (cf. e.~g. \cite[(2.17)]{Lyons98}). In the general Hopf-algebraic setup, we shall compare \eqref{rou02} with \cite[Subsection 4.1]{BFPP}, and more precisely \cite[Theorem 4.2]{BFPP}. To this end let us interpret the driver $X$ as a Lie-valued path
	\begin{equation*}
		\begin{array}{rccl}
			X : &[0,1] &\longrightarrow &T \\
			 & t & \longmapsto & X_t := \sum_{\mfl\in \mfL} X_t^\mfl \z_{(\mfl,0)}.
		\end{array} 
	\end{equation*}
	Let us also denote $\mathbf{X} := \exp_{D}(\mbfX)$. By \eqref{formal} and the Leibniz rule, combined with \eqref{rou02} (in differential form)
	\begin{equation}\label{shrp01}
		d \mathbf{X}_{st} = \mathbf{X}_{st} \pprod_D d \mbfX_{st} = \mathbf{X}_{st} \pprod_D \rho_D (\mathbf{X}_{st})d X_t.
	\end{equation} 
	Now since $\mathbf{X}$ is grouplike, cf. \eqref{grouplike}, property \eqref{go07} allows to rewrite \eqref{shrp01} as
	\begin{equation}\label{shrp02}
		d \mathbf{X}_{st} = \mathbf{X}_{st} dX_t,
	\end{equation}
	where the r.~h.~s. is interpreted as the concatenation product in $\R[\populated]^*$, which is extended from $\rmU (T,D)$. Since the Guin-Oudom procedure identifies the concatenation product with the Grossman-Larson product (cf. Remark \ref{rem:glp}), $\mathbf{X}$ solves the differential equation postulated in \cite[proof of Theorem 4.2]{BFPP}.
\end{remark}	

\subsection{Rough paths over the multi-index Hopf algebra}\label{subsec:3.2}
We are now ready to provide a definition of rough path based on multi-indices. To this end, we first introduce some truncated structures. Recall from Lemma \ref{lem:3.4} that $(T,D)$ is graded and $1$-connected: By Proposition \eqref{tra01}, $(\R[\mathcal{T}],\Delta_D)$ is a connected graded Hopf algebra and the dual of the Guin-Oudom basis over $\{\z^\beta\}_{\beta\in\mathcal{T}}$ consists of homogeneous elements, that is,
\begin{equation*}
	\R[\mathcal{T}] = \bigoplus_{n=0}^\infty \bigoplus_{\substack{I\in M(\mathcal{T})\\ \deg(I)=n}} \lspan\{z^I\};
\end{equation*}
here $z^I = \prod_{\beta\in\mathcal{T}} z_\beta^{I(\beta)}$ is dual to the $I$-th Guin-Oudom basis element, which we will denote by $\mathsf{Z}_I$, and $\deg(I) = \sum_{\beta\in \populated} I(\beta) \length{\beta}$. 
\begin{definition}
	Let $N\in \N_0$ and let
		\begin{equation*}
			\R[\mathcal{T}]^{(N)} = \bigoplus_{n=0}^N \bigoplus_{\substack{I\in M(\mathcal{T})\\ \deg(I)=n}} \lspan\{z^I\}
		\end{equation*}
	We define $\mathcal{G}^{(N)}(T,D)$ as the set of $N$-truncated characters, i.~e. the set of functionals $F\in (\R[\mathcal{T}]^{(N)})^*$ such that for all $z,z'\in \R[\mathcal{T}]^{(N)}$ with $zz'\in \R[\mathcal{T}]^{(N)}$
		\begin{equation*}
			\langle F, zz'\rangle = \langle F,z\rangle \langle F,z'\rangle.
		\end{equation*}
\end{definition}	
The truncated multiplicativity property implies, as in Subsection \ref{subsec::2.3}, that $F$ can be expressed as
\begin{equation}\label{tru03}
	\sum_{\substack{I\in M(\mathcal{T})\\ \deg(I)\leq N}} \mathbf{f}^I \mathsf{Z}_I
\end{equation}
where for all $I\in M(\mathcal{T})$, $\deg(I)\leq N$, 
\begin{equation}\label{tru04}
	\mathbf{f}^{I} = \prod_{\beta\in \mathcal{T}} \mathbf{f}_\beta^{I(\beta)}.
\end{equation}
Thus, $F\in \mathcal{G}^{(N)}(T,D)$ may be identified with the truncated $D$-exponential of an element $\mathbf{f}\in T^{(N)}$, i.~e.
\begin{align*}
	\begin{array}{rccl}
		\exp^{(N)}_{D} : & T^{(N)} &\longrightarrow &\mcG^{(N)}(T,D)\\
		&\bff & \longmapsto & \exp^{(N)}_{D}(\bff) = \sum_{\substack{I\in M(\mathcal{T}) \\ \deg(I)\leq N}} \bff^I \mathsf{Z}_I,
	\end{array}
\end{align*}
where
\begin{equation}\label{TN}
	T^{(N)} = \lspan_{\beta\in\populated} \{\z^\beta\,|\, \length{\beta}\leq N\}.
\end{equation}
\begin{definition}\label{def:rp}
	Let $\alpha\in (0,1)$, and let $N:= \lfloor \alpha^{-1} \rfloor$. A $\alpha$-rough path over $\populated$ is a map $\mbfX: [0,1]^2 \to T^{(N)}$ such that for every $\beta\in \populated$ with $\length{\beta}\leq N$ and every $0\leq s < u < t \leq 1$
	\begin{equation}\label{tru01}
		\mbfX_{st\,\beta} = \mbfX_{su\,\beta} + \sum_{\gamma\in \populated} \big(\rho_D\big(\exp^{(N)}_{D}(\mbfX_{su})\big)\big)_\beta^\gamma \mbfX_{ut\,\gamma} 
	\end{equation}
	and
	\begin{align}\label{tru06}
		\sup_{0\leq s<t\leq 1}\frac{|\mbfX_{st\,\beta}|}{|t-s|^{\alpha\length{\beta}}} <\infty.
	\end{align}
\end{definition}	
\begin{remark}
	\mbox{}
	\begin{enumerate}[label=(\roman*)]
		\item Note from \eqref{gra01bis} that
		\begin{equation}\label{tru08}
			\big(\rho_D (\mathsf{Z}_{I})\big)_\beta^\gamma \neq 0\implies \length{\beta} = \deg(I) + \length{\gamma}.
		\end{equation}
	In particular, this means that $\length{\tilde{\beta}} < \length{\beta}$ for all $\tilde{\beta}$ with $I(\tilde{\beta})\neq 0$, but also $\deg(I) < \length{\beta}$. Therefore, the $\beta$-projection of \eqref{rou03} is equivalent to \eqref{tru01}, and the truncation of the exponential is effectively irrelevant.
		\item The graded structure of $(\R[\mathcal{T}],\Delta_D)$ implies that \eqref{tru01} yields the truncated Chen's relation
		\begin{equation}\label{tru02}
			\exp_D^{(N)}(\mbfX_{su}) * \exp_D^{(N)}(\mbfX_{ut}) = \exp_D^{(N)}(\mbfX_{st}).
		\end{equation}
		In addition, the representation given by \eqref{tru03} and \eqref{tru04} implies
		\begin{equation}\label{tru05}
			\big| \langle \exp_D^{(N)}(\mbfX_{st}), z^I \rangle \big| = \tfrac{1}{I!} \prod_{\beta\in \mathcal{T}} |\mbfX_{st\, \beta}|^{I(\beta)} \lesssim |t-s|^{\alpha \deg(I)}.
		\end{equation}
		The (truncated) multiplicativity property, Chen's relation \eqref{tru02} and the analytic constraint \eqref{tru05} are typically the defining properties of a Hopf-algebraic rough path, cf. e.~g. \cite[Definition 4.3]{BZ21}. Our definition puts the focus on the Lie algebra side, but is of course equivalent.
	\end{enumerate}
\end{remark}	
\begin{remark}
	In line with regularity structures, one could of course use different scalings for different noise components $\mfl\in\mfL$ and impose an analytic condition of the form
	 \begin{align*}
	 	\sup_{0\leq s<t\leq 1}\frac{|\mbfX_{st\,\beta}|}{|t-s|^{|\beta|}} <\infty,
	 \end{align*}
 	where
 	\begin{align*}
 		|\beta| = \sum_{\mfl\in\mfL} \alpha_\mfl \sum_{k\in\N_0} \beta(\mfl,k).
 	\end{align*}
 	For example, if we think of $\mfl = 0$ as a drift term, one chooses
 	\begin{align*}
 		\alpha_\mfl = \left\{\begin{array}{ll}
 			1 & \mbox{if }\mfl = 0,\\
 			\alpha & \mbox{otherwise.}
 		\end{array}\right.
 	\end{align*}	
 	This will become important in Theorem \ref{th:trp} (iii) and Remark \ref{rem:4.18} below.
\end{remark}	

\medskip

An important result for rough paths is the so-called extension theorem \cite[Theorem 2.2.1]{Lyons98}. In our context, it corresponds to a particular case of the extension theorem for Hopf-algebraic rough paths \cite[Proposition 4.5]{BZ21}; for completeness, we provide a short proof which keeps the focus on the Lie-algebraic prespective.
\begin{proposition}
	Let $\mbfX^{(N)} : [0,1]^2 \to T^{(N)}$ be a $\alpha$-rough path, $N=\lfloor \alpha^{-1} \rfloor$. There exists a unique extension $\mbfX:[0,1]^2 \to T^*$ satisfying \eqref{rou03} and \eqref{tru06} for all $\beta\in \mathcal{T}$.
\end{proposition}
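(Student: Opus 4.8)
The plan is to construct the components $\mbfX_{st\,\beta}$ by induction on the length $\length{\beta}$. For $\length{\beta}\le N$ one simply sets $\mbfX_{st\,\beta}:=\mbfX^{(N)}_{st\,\beta}$; this already yields the $\beta$-component of \eqref{rou03} (which, by \eqref{tru08}, coincides with \eqref{tru01} up to the irrelevant truncation of the exponential) and the bound \eqref{tru06} for every such $\beta$. The induction then gains one unit of length at each step by means of the sewing lemma.

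So suppose $\mbfX_{st\,\beta}$ has been built for every $\beta$ of length $<n$, for some $n>N$, in such a way that the $\beta$-component of \eqref{rou03} holds whenever $\length{\beta}<n$, and \eqref{tru06} holds for such $\beta$. Fix $\beta$ with $\length{\beta}=n$. Writing $\rho_D(\exp_D(\mbfX_{su}))=\sum_I\mbfX_{su}^I\rho_D(\mathsf{Z}_I)$ in the Guin--Oudom basis (with $\mbfX_{su}^I:=\prod_{\beta'}\mbfX_{su\,\beta'}^{I(\beta')}$) and isolating the term $I=0$, where $\rho_D(\mathsf{Z}_0)=\id$, the $\beta$-component of \eqref{rou03} that we want to satisfy reads $\mbfX_{st\,\beta}-\mbfX_{su\,\beta}-\mbfX_{ut\,\beta}=\Xi_{sut\,\beta}$, where
\[
\Xi_{sut\,\beta}:=\sum_{I\neq 0}\,\sum_{\gamma\in\populated}\big(\rho_D(\mathsf{Z}_I)\big)_\beta^\gamma\,\mbfX_{su}^I\,\mbfX_{ut\,\gamma}.
\]
By the grading relation \eqref{tru08}, a nonzero summand forces $\length{\beta}=\deg(I)+\length{\gamma}$ with $\deg(I)\ge 1$; hence every component of $\mbfX$ occurring in $\Xi_{sut\,\beta}$ has length $\le n-1$ and is already available, and the double sum is finite thanks to the finiteness property \eqref{fin03}. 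Moreover, using \eqref{tru06} for lengths $<n$ together with $|\mbfX_{su}^I|\lesssim|u-s|^{\alpha\deg(I)}$, one gets $|\Xi_{sut\,\beta}|\lesssim|t-s|^{\alpha n}$; since $n\ge N+1>\alpha^{-1}$, the exponent $\alpha n$ is strictly larger than $1$.

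The crucial point is then that $\Xi_{\cdot\,\beta}$ is a cocycle, i.e.\ $\Xi_{uvt\,\beta}-\Xi_{svt\,\beta}+\Xi_{sut\,\beta}-\Xi_{suv\,\beta}=0$ for all $s<u<v<t$. I would deduce this from the associativity of the operation $\bff\circledast\bfg:=\bff+\rho_D(\exp_D(\bff))\bfg$ on $T^*$ — which holds because, by \eqref{com01}, this operation is the transport through the bijection $\exp_D:T^*\to\mcG(T,D)$ of the convolution product — combined with two observations: first, by the inductive hypothesis $\bff\circledast\bfg$ reproduces $\mbfX$ in all components of length $<n$ when $\bff=\mbfX_{su}$, $\bfg=\mbfX_{ut}$; second, by \eqref{tru08} the length-$n$ component of $\bff\circledast\bfg$ depends only on the length-$<n$ components of $\bff,\bfg$, and when the length-$n$ slots of $\bff,\bfg$ are set to zero this length-$n$ component equals the corresponding $\Xi$. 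Unfolding the two bracketings of the triple product of $\mbfX_{su},\mbfX_{uv},\mbfX_{vt}$ (with their length-$n$ slots zeroed out) at the component $\beta$ then produces exactly the asserted identity. With the cocycle property and the bound $|\Xi_{sut\,\beta}|\lesssim|t-s|^{\alpha n}$, $\alpha n>1$, in hand, the sewing lemma (in its $\delta$-cohomological form, cf.\ \cite{Gub04}) furnishes a unique map $(s,t)\mapsto\mbfX_{st\,\beta}$ with $\mbfX_{st\,\beta}-\mbfX_{su\,\beta}-\mbfX_{ut\,\beta}=\Xi_{sut\,\beta}$ and $\sup_{0\le s<t\le 1}|\mbfX_{st\,\beta}|/|t-s|^{\alpha n}<\infty$; uniqueness holds because any two such maps differ by an increment $\phi(t)-\phi(s)$ which, being $O(|t-s|^{\alpha n})$ with $\alpha n>1$, forces $\phi$ constant. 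This $\mbfX_{\cdot\,\beta}$ is the length-$n$ component of the extension, and by construction it satisfies the $\beta$-component of \eqref{rou03} and the bound \eqref{tru06}, which closes the induction.

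Finally, since for any $\beta$ the $\beta$-component of \eqref{rou03} only couples components of length $\le\length{\beta}$, the family built by this induction satisfies \eqref{rou03} and \eqref{tru06} in full; and uniqueness of the extension follows because any $\mbfX$ as in the statement must agree with $\mbfX^{(N)}$ in lengths $\le N$, and at each length $n>N$ its $\beta$-component is forced to solve the same increment equation $\mbfX_{st\,\beta}-\mbfX_{su\,\beta}-\mbfX_{ut\,\beta}=\Xi_{sut\,\beta}$ under \eqref{tru06}, hence coincides with the one constructed above by the uniqueness part of the sewing lemma. I expect the main obstacle to be the verification of the cocycle identity for $\Xi_{\cdot\,\beta}$: conceptually it is nothing but the associativity of $\mcG(T,D)$ (equivalently the coassociativity of $\Delta_D$), but some care is needed to keep track of which components enter the two bracketings and to make sure that each a priori infinite sum collapses to a finite one by \eqref{fin03} and the grading.
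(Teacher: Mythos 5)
Your proposal is correct and follows the same overall scheme as the paper: induction on the length $\length{\beta}$, one application of the sewing lemma per new component, and the same grading argument via \eqref{tru08} to see that the defect $\Xi_{sut\,\beta}=\sum_{\gamma}\big(\rho_D(\exp_D(\mbfX_{su}))-\id\big)_\beta^\gamma\mbfX_{ut\,\gamma}$ involves only already-constructed components and carries the exponent $\alpha\length{\beta}>1$. The one step where you diverge is the treatment of the cocycle/coboundary structure, which you correctly flag as the delicate point. You verify $\delta\Xi_{\cdot\,\beta}=0$ directly from the associativity of $\bff\circledast\bfg=\bff+\rho_D(\exp_D(\bff))\bfg$ (i.e.\ from \eqref{com01} and coassociativity of $\Delta_D$), using that the length-$n$ slice of $\circledast$ only sees lower levels, and then invoke the acyclicity of the $\delta$-complex to apply the sewing lemma; this works, and the bookkeeping you worry about does close up thanks to \eqref{fin03} and \eqref{tru08}. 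The paper short-circuits this entirely: it anchors at the base point and sets $F_{st}:=\sum_{\gamma}\big(\rho_D(\mathbf{X}^{(M)}_{0s})-\id\big)_\beta^\gamma\mbfX^{(M)}_{st\,\gamma}$, then computes $\delta F_{sut}=-\Xi_{sut\,\beta}$ using Chen's relation at the lower levels, so the defect is exhibited as an explicit coboundary, the cocycle identity comes for free from $\delta\circ\delta=0$, and Gubinelli's sewing lemma applies in its basic two-parameter form. Your route is the more standard one (it is essentially the argument behind Lyons' original extension theorem) and makes the algebraic mechanism — associativity of the truncated group law — more transparent; the paper's anchoring trick buys a shorter verification at the cost of introducing the auxiliary germ $F$. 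Both yield the same existence and uniqueness statement.
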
	
\begin{proof}
	The proof goes by induction. For every $M>N$, we will denote the $M$-th extension $\mbfX^{(M)}:[0,1]^2\to T^{(M)}$ satisfying \eqref{tru01} and \eqref{tru06} for all $\beta$ with $\length{\beta}\leq M$, and accordingly $\mathbf{X}^{(M)}$ $:=$ $\exp_{D}^{(M)}(\mbfX^{(M)})$. Assuming it has been constructed for all $M'\leq M$, we will build it for $M+1$ (note that this covers both the base case $M=N$ and the induction step). Each step is an application of the sewing lemma \cite[Proposition 1]{Gub04}.
	
	\medskip
	
	Fix $\beta$ with $\length{\beta} = M+1$. Define $F:[0,1]^2 \to \R$ by
	\begin{equation*}
		F_{st} := \sum_{\gamma\in \mathcal{T}} \big(\rho_D (\mathbf{X}_{0s}^{(M)}) - \id\big)_\beta^\gamma \mbfX_{st\, \gamma}^{(M)}. 
	\end{equation*}
	Denote $\delta F_{sut} := F_{st} - F_{su} - F_{ut}$. Then by \eqref{tru01}
	\begin{align*}
		&\delta F_{sut} \\
		&\quad= \sum_{\gamma\in \mathcal{T}} \big(\rho_D (\mathbf{X}_{0s}^{(M)}) - \id\big)_\beta^\gamma (\mbfX_{st\, \gamma}^{(M)} - \mbfX_{ut\,\gamma}^{(M)}) - \big(\rho_D  (\mathbf{X}_{0u}^{(M)}) - \id\big)_\beta^\gamma \mbfX_{ut\, \gamma}^{(M)}\\
		&\quad = \sum_{\gamma\in \mathcal{T}} \big(\rho_D (\mathbf{X}_{0s}^{(M)})\rho_D (\mathbf{X}_{su}^{(M)}) - \rho_D (\mathbf{X}_{su}^{(M)}) -\rho_D (\mathbf{X}_{0u}^{(M)}) + \id\big)_\beta^\gamma \mbfX_{ut\, \gamma}^{(M)}\\
		&\quad = - \sum_{\gamma\in \mathcal{T}} \big(\rho_D (\mathbf{X}_{su}^{(M)}) - \id\big)_\beta^\gamma \mbfX_{ut\, \gamma}^{(M)}.
	\end{align*}
	By strict triangularity inherited from \eqref{tru08} (note that $-\id$ is removing the $I=0$ contribution of $\rho_D(\mathbf{X}_{su}^{(M)})$), $\length{\gamma}< \length{\beta}$, so the induction hypothesis yields $| \mbfX_{ut\, \gamma}^{(M)}|\lesssim |t-u|^{\alpha \length{\gamma}}$. On the other hand, representation \eqref{tru03} next to the estimate \eqref{tru05}, which holds by induction, yields for every $\gamma \in \mathcal{T}$
	\begin{equation}
		|\big(\rho_D (\mathbf{X}_{su}^{(M)}) - \id\big)_\beta^\gamma| \leq \sum_{I\in M(\mathcal{T})} \tfrac{1}{I!} (\rho_D(\mathsf{Z}_I))_\beta^\gamma |u-s|^{\alpha \deg(I)},
	\end{equation}
	(the sum over $I$ is effectively finite), which in turn implies by \eqref{tru08}
	\begin{equation*}
		|\big(\rho_D (\mathbf{X}_{su}^{(M)}) - \id\big)_\beta^\gamma| \leq \big(\sum_{I\in M(\mathcal{T})} \tfrac{1}{I!} (D^{\length{I}})_\beta^\gamma\big) |u-s|^{\alpha\length{\beta} - \alpha\length{\gamma}}.
	\end{equation*}
	Since $\alpha\length{\beta} = \alpha (M+1) \geq \alpha (N + 1) >1$, by the sewing lemma \cite[Proposition 1]{Gub04} there exists a unique $\mbfX_{\beta}^{(M+1)} :[0,1]^2 \to \R$ such that 
	\begin{equation*}
		\mbfX_{st\,\beta}^{(M+1)} = \mbfX_{su\,\beta}^{(M+1)} + \mbfX_{ut\,\beta}^{(M+1)} - \delta F_{sut},
	\end{equation*}
	and 
	\begin{equation*}
		\sup_{0\leq s < t \leq 1} \frac{|\mbfX_{st\,\beta}^{(M+1)}|}{|t-s|^{\alpha \length{\beta}}} <\infty.
	\end{equation*}	
	Setting $\mbfX_{\beta'}^{(M+1)} = \mbfX_{\beta'}^{(M)}$ for all $\length{\beta}\leq M$, \eqref{tru01} holds for the new $\beta$ and thus $(M+1)$-th extension is determined.
\end{proof}	
\begin{remark}
	The more general Lyons-Victoir extension theorem \cite[Theorem 1]{LV07} also holds, cf. \cite[Theorem 4.4]{BZ21}.
\end{remark}	
In the sequel, we will always assume that a rough path has been uniquely extended to all levels, and work with the non-truncated object $\mbfX : [0,1]^2 \to T^*$ satisfying \eqref{rou03} and \eqref{tru06} for all $\beta\in \populated$.

\section{Algebraic renormalization of rough paths based on multi-indices}\label{sec::4}
In this section we study transformations of the equation related to admissible counterterms in the sense of \cite[Subsection 3.4]{BL23}. Like with the shifts in the space of solutions, we study their infinitesimal generators (Subsection \ref{subsec::4.1}) and use them to build groups of endomorphisms of $T^*$ (Subsection \ref{subsec::tran02}). These allow to define translated rough paths in the sense of \cite{BCFP} (Subsection \ref{subsec::4.3}).
\subsection{Infinitesimal generators and a multi pre-Lie algebra}\label{subsec::4.1}
The type of transformations we are interested in take the form of local counterterms
\begin{align}\label{ren01}
	a_\mfl(y) \mapsto a_\mfl(y) + c_\mfl[\mathbf{a},y],
\end{align}
where $c\in T$. It follows from \cite[Subsection 3.4]{BL23} that, via \eqref{rou02}, one can express the rough path equations for \eqref{ren01} as
\begin{align*}
	\mbfX_{st} = \sum_{\mfl\in\mfL} \int_s^t \rho_D(\exp_D(\mbfX_{su})) (\z_{(\mfl, 0)} + c_\mfl) dX_u^\mfl.
\end{align*}
Thus, these transformations of $T^*$ take the form of translations
\begin{align*}
	\z_{(\mfl,0)} \mapsto \z_{(\mfl,0)} + c_\mfl,
\end{align*}	
which coincide with the tree-based case \cite[Definition 14]{BCFP}. However, unlike \cite[Remark 4.13]{BFPP} and the references therein, we will not appeal to the universal extension from the free structure (in this case the free Novikov algebra); instead, we will study the algebraic properties of this new transformation and construct translation maps ``explicitly".

\medskip

Let us first study the infinitesimal generators of shifts in the (affine) space of equations. We express $c_\mfl$ as its Taylor expansion around a point $y$. Fix a monomial contribution $k\in\N_0$ and look at
\begin{align*}
	a_\mfl \mapsto a_\mfl + t(\cdot - y)^k.
\end{align*}
Then
\begin{align*}
	\z_{(\mfl',k')} [\mathbf{a} + t e_\mfl(\cdot-y)^k, y] = \z_{(\mfl',k')}[\mathbf{a},y] + t \delta_{(\mfl',k')}^{(\mfl,k)},
\end{align*}
and therefore the infinitesimal generator takes the form $\partial_{(\mfl,k)}$ $:=$ $\partial_{\z_{(\mfl,k)}}$. It is easy to see that $\partial_{(\mfl,k)}$ is not closed in $T$; indeed,
\begin{align*}
	(\partial_{(\mfl,k)})_\beta^\gamma \neq 0\,\implies\, [\beta] = [\gamma] + k - 1.
\end{align*}	
We are however not interested in general shifts, but only those \textit{generated by the equation}. A natural approach would be, as with the shifts in solution space, to compose this derivation with a multiplication with an element $\pi\in \R[\mfL\times \N_0]$, which then takes the form of the infinitesimal generator of
\begin{align*}
	a_\mfl \mapsto a_\mfl + t\pi(\cdot - y)^k.
\end{align*}	
Still, this is too general, as what we would like to see is a \textit{local counterterm}, i.~e. a functional of $\mathbf{a}$ evaluated at the point $y$. It should therefore take the form
\begin{align*}
	t\sum_{k\in\N_0} \pi^{(k)}(\cdot-y)^k,
\end{align*}
where
\begin{align*}
	\pi^{(k)} = \tfrac{1}{k!} \partial_y^k \pi (\mathbf{a},y).
\end{align*}
We finally restrict to $\pi(\mathbf{a},y) = \z^\gamma[\mathbf{a},y]$, using the structure we have already defined in \eqref{mi07}, and note that $\partial_y^k \z^\gamma[\mathbf{a},y] = D^k \z^\gamma[\mathbf{a},y]$. We thus have the transformation
\begin{align}\label{neg01}
	\sum_{k\in\N_0} \big(\tfrac{1}{k!}D^k \z^\gamma\big) \partial_{(\mfl,k)}.
\end{align}
Note that this map is well-defined as an endomorphism of $\R[\mfL\times \N_0]$, since when applied to $\z^\beta$ the sum over $k$ is effectively finite (as there are only finitely many $(\mfl,k)$ such that $\beta(\mfl,k)\neq 0$).
\begin{lemma}\label{lem:ren01}
	For every $\mfl\in \mfL$ and every $\gamma\in M(\mfL\times \N_0)$, $\sum_{k\in\N_0} \big(\tfrac{1}{k!}D^k \z^\gamma\big) \partial_{(\mfl,k)}$ $\in$ $\textnormal{End}(\R[[\mfL\times \N_0]])$.
\end{lemma}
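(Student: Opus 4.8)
The plan is to mimic the structure of the proof of Lemma \ref{lem:3.4}'s companion results (in particular Lemma on $D\in\textnormal{Der}(\R[[\mfL\times\N_0]])$), reducing the claim to a finiteness property of the matrix coefficients. Writing $E_{\mfl,\gamma} := \sum_{k\in\N_0}\big(\tfrac{1}{k!}D^k\z^\gamma\big)\partial_{(\mfl,k)}$, the point is that $E_{\mfl,\gamma}$ is \emph{a priori} only defined on $\R[\mfL\times\N_0]$ (finite linear combinations of monomials), and to extend it to the completion $\R[[\mfL\times\N_0]]$ one needs that, for each monomial $\z^\beta$, only finitely many monomials $\z^\gamma$ appear with nonzero coefficient in $E_{\mfl,\gamma}\z^\beta$; equivalently
\begin{align}\label{fin-E}
	\mbox{for all }\beta\in M(\mfL\times\N_0)\quad \#\{\delta\in M(\mfL\times\N_0)\,|\,(E_{\mfl,\gamma})_\beta^\delta\neq 0\}<\infty.
\end{align}

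First I would make the action explicit. Applying $\partial_{(\mfl,k)}$ to $\z^\beta$ lowers $\beta(\mfl,k)$ by one (and vanishes unless $\beta(\mfl,k)\geq 1$); then one multiplies by $\tfrac{1}{k!}D^k\z^\gamma$, which by the finiteness property \eqref{der02} for $D$ (applied $k$ times, together with the fact that $D$ respects $\length{\cdot}$ by \eqref{der12}) is a finite linear combination of monomials, all of length $\length{\gamma}$. Hence for fixed $\beta$, the sum over $k$ is restricted to the finitely many $k$ with $\beta(\mfl,k)\neq 0$, and for each such $k$ the factor $\tfrac{1}{k!}D^k\z^\gamma$ contributes finitely many monomials; multiplying a monomial of length $\length{\beta}-1$ by finitely many monomials of length $\length{\gamma}$ gives finitely many monomials. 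This proves \eqref{fin-E}. Once \eqref{fin-E} is established, the extension of $E_{\mfl,\gamma}$ to $\R[[\mfL\times\N_0]]$ is defined coefficientwise by the same matrix, and the Leibniz rule on the completion follows by density/coefficientwise comparison from the Leibniz rule on $\R[\mfL\times\N_0]$ (or simply: $E_{\mfl,\gamma}$ is a sum of a multiplication operator composed with a derivation, both of which extend to the completion under \eqref{fin-E}, and the derivation property is inherited).

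There is essentially no obstacle here; the only thing to be slightly careful about is that the sum $\sum_{k\in\N_0}$ defining $E_{\mfl,\gamma}$ is a priori infinite, so the argument must record precisely why it truncates when evaluated on a monomial — namely that a multi-index $\beta$ has $\beta(\mfl,k)\neq 0$ for only finitely many pairs $(\mfl,k)$, which is built into the definition of $M(\mfL\times\N_0)$. I would phrase the proof as: it suffices to show \eqref{fin-E}; then give the two-line explicit count above using \eqref{der02} and \eqref{der12}, and conclude that the coefficientwise-defined extension is a derivation because on $\R[\mfL\times\N_0]$ it is the composition (more precisely, finite sum of compositions) of the multiplication operators $\pi\mapsto(\tfrac{1}{k!}D^k\z^\gamma)\pi$ with the derivations $\partial_{(\mfl,k)}$, and multiplications and derivations both extend. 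The only nontrivial input borrowed from earlier is \eqref{der02}; everything else is bookkeeping.
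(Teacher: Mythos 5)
There is a genuine gap: you establish the wrong finiteness property. Writing $E := \sum_{k\in\N_0}\big(\tfrac{1}{k!}D^k\z^\gamma\big)\partial_{(\mfl,k)}$, your argument fixes an \emph{input} monomial $\z^\beta$ and checks that $E\z^\beta$ is a finite linear combination of monomials (the sum over $k$ truncates because $\beta(\mfl,k)\neq 0$ for only finitely many $k$, and each $D^k\z^\gamma$ is a polynomial). That only shows $E\in\textnormal{End}(\R[\mfL\times\N_0])$, which the paper already records in the sentence preceding the lemma and is not the content of the statement. Membership in $\textnormal{End}(\R[[\mfL\times\N_0]])$ requires the \emph{transposed} finiteness: for each fixed output monomial $\z^{\beta'}$, only finitely many input monomials $\z^{\gamma'}$ may contribute to the coefficient of $\z^{\beta'}$, since otherwise the coefficientwise extension to a formal series $\sum_{\gamma'}\pi_{\gamma'}\z^{\gamma'}$ produces an infinite sum. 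In the paper's index convention this reads $\#\{\gamma'\,|\,E_{\beta'}^{\gamma'}\neq 0\}<\infty$ for fixed $\beta'$ --- note this is exactly the convention of \eqref{der02}, which you cite but read in the opposite direction. The two conditions are genuinely different, and the correct one is not automatic here: by \eqref{tra10} the candidate inputs are of the form $\gamma'=\beta''+e_{(\mfl,k)}$ with $\beta''\leq\beta'$, and as $k$ ranges over $\N_0$ this is a priori an infinite family.

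The missing idea, which is the actual content of the paper's proof, is that $k$ is pinned down by the output: a nonzero contribution forces $\big(\tfrac{1}{k!}D^k\z^\gamma\big)_{\beta'-\beta''}\neq 0$ for some $\beta''\leq\beta'$, and since by \eqref{der11} and \eqref{der12} the operator $D$ preserves $\length{\cdot}$ while shifting $[\cdot]$ by $-1$, one gets $(D^k)_{\beta}^{\alpha}\neq 0\implies\length{\beta}-[\beta]=\length{\alpha}-[\alpha]+k$; hence for fixed $\gamma$ and fixed $\beta'-\beta''$ there is at most one admissible $k$. Finitely many pairs $(\beta'',k)$ then yield finitely many $\gamma'=\beta''+e_{(\mfl,k)}$. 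Without this step your argument does not close. (A secondary point: you invoke \eqref{der02} to justify that $D^k\z^\gamma$ is a finite linear combination, but that is immediate from the definition of $D$ on polynomials; \eqref{der02} is the dual statement, needed precisely for the direction you skip.)
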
	
\begin{proof}
	It is enough to show the finiteness property
	\begin{align*}
		& \text{for all } \beta'\in M(\mfL \times \N_0),\nonumber\\
		& \# \Big\{\gamma'\in M(\mfL\times \N_0)\,|\Big(\sum_{k\in\N_0} \big(\tfrac{1}{k!}D^{k} \z^{\gamma}\big) \partial_{(\mfl,k)}\Big)_{\beta'}^{\gamma'}\neq 0\Big\}<\infty.
	\end{align*}
	Let us fix $k$ in the sum. Note that
	\begin{align}\label{tra10}
		\Big(\big(\tfrac{1}{k!}D^{k} \z^{\gamma}\big) \partial_{(\mfl,k)}\Big)_{\beta'}^{\gamma'} = \sum_{\beta''}\big(\tfrac{1}{k!}D^{k} \z^{\gamma}\big)_{\beta'-\beta''} \gamma'(\mfl,k) \delta_{\beta'' + e_{(\mfl,k)}}^{\gamma'}.
	\end{align}
	Fixing $\beta'$, there are finitely many $\beta''\leq \beta$. Following \eqref{der11} and \eqref{der12}, it holds
\begin{align*}
	(D^k)_\beta^\gamma \neq 0\,\implies \length{\beta} - [\beta] = \length{\gamma} - [\gamma] + k,
\end{align*}
	so for each $\beta' - \beta''$ and a fixed $\gamma$, $k$ is completely determined. Finitely many $(\beta'',k)$ yield finitely many $\gamma'$.
\end{proof}	
Actually, from \eqref{tra10} and using in addition \eqref{der12bis} we obtain 
\begin{align}
	&\Big(\sum_{k\in\N_0} \big(\tfrac{1}{k'!}D^{k'} \z^{\gamma'}\big) \partial_{(\mfl',k')}\Big)_\beta^\gamma \neq 0\,\implies\nonumber\\
	&\left\{\begin{array}{l}
		\sum_{k\in\N_0} \beta(\mfl,k) = \sum_{k\in\N_0} \gamma(\mfl,k) + \sum_{k\in\N_0} \gamma'(\mfl,k) - \delta_\mfl^{\mfl'},\,\mfl\in\mfL,\\
		\length{\beta} = \length{\gamma'} -1 + \length{\gamma},\\
		\left[\beta\right] = \left[\gamma'\right] - 1 + \left[\gamma\right].
	\end{array}\right.\label{tra11}
\end{align}
In particular, further restricting to $\gamma\in T$, we get $[\cdot]$-preserving maps. Restricting to these multi-indices, on the other hand, is natural from the viewpoint of applications, since the translations that we are interested in are the ones generated by components of the rough path (in analogy with substitution of B-series).

\medskip

Since \eqref{neg01} has a vector field-type structure (derivation and multiplication), the pre-Lie property is guaranteed for the map
\begin{align*}
	&\Big(\sum_{k\in\N_0} \big(\tfrac{1}{k!}D^k \z^\gamma\big) \partial_{(\mfl,k)}, \sum_{k'\in\N_0} \big(\tfrac{1}{k'!}D^{k'} \z^{\gamma'}\big) \partial_{(\mfl',k')}\Big) \\
	&\mapsto \sum_{k'\in\N_0} \Big(\sum_{k\in\N_0} \big(\tfrac{1}{k!}D^k \z^\gamma\big) \partial_{(\mfl,k)}\tfrac{1}{k'!}D^{k'} \z^{\gamma'}\Big) \partial_{(\mfl',k')}.
\end{align*}	
We actually have the following.
\begin{lemma}
\begin{align}
	&\sum_{k'\in\N_0} \Big(\sum_{k\in\N_0} \big(\tfrac{1}{k!}D^k \z^\gamma\big) \partial_{(\mfl,k)}\tfrac{1}{k'!}D^{k'} \z^{\gamma'}\Big) \partial_{(\mfl',k')} \nonumber\\ &\quad= \sum_{k'\in\N_0} \Big(\tfrac{1}{k'!}D^{k'}\Big(\sum_{k\in\N_0} \big(\tfrac{1}{k!}D^k \z^\gamma\big) \partial_{(\mfl,k)} \z^{\gamma'}\Big)\Big) \partial_{(\mfl',k')}.\label{neg05}
\end{align}
In particular, if $[\gamma]=[\gamma']=1$, the r.~h.~s. is a linear combination of elements \eqref{neg01} with $[\cdot] = 1$, so the structure is closed.
\end{lemma}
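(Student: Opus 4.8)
The plan is to recognise \eqref{neg05} as a commutation statement between $D$ and the translation generators and to verify it via an operator identity checked on the generators of $\R[[\mfL\times\N_0]]$. First I would introduce the shorthand $P^{\mfl}_\gamma:=\sum_{k\in\N_0}\big(\tfrac1{k!}D^{k}\z^\gamma\big)\partial_{(\mfl,k)}$; by Lemma \ref{lem:ren01} this is a well-defined element of $\textnormal{End}(\R[[\mfL\times\N_0]])$, and since a derivation multiplied by a function is again a derivation and the defining sum is locally finite, $P^{\mfl}_\gamma\in\textnormal{Der}(\R[[\mfL\times\N_0]])$. Both sides of \eqref{neg05} are operators of the form $\sum_{k'\in\N_0} h_{k'}\partial_{(\mfl',k')}$ with $h_{k'}\in\R[[\mfL\times\N_0]]$; evaluating such an operator on $\z_{(\mfl',m)}$ returns $h_m$, so it determines the family $(h_{k'})$, and \eqref{neg05} is equivalent to $P^{\mfl}_\gamma\big(\tfrac1{k'!}D^{k'}\z^{\gamma'}\big)=\tfrac1{k'!}D^{k'}\big(P^{\mfl}_\gamma\z^{\gamma'}\big)$ for every $k'$, i.e. to $[D^{k'},P^{\mfl}_\gamma]\z^{\gamma'}=0$. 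Since $[\,\cdot\,,P^{\mfl}_\gamma]$ satisfies $[AB,P^{\mfl}_\gamma]=A[B,P^{\mfl}_\gamma]+[A,P^{\mfl}_\gamma]B$, one has $[D^{k'},P^{\mfl}_\gamma]=0$ as soon as $[D,P^{\mfl}_\gamma]=0$, so everything reduces to this single operator identity.

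The core of the argument is $[D,P^{\mfl}_\gamma]=0$, which I would prove by noting that $[D,P^{\mfl}_\gamma]$ is again a derivation of $\R[[\mfL\times\N_0]]$ and hence is determined by its values on the generators $\z_{(\mfm,j)}$. From $\partial_{(\mfl,k)}\z_{(\mfm,j)}=\delta^{(\mfm,j)}_{(\mfl,k)}$ one gets $P^{\mfl}_\gamma\z_{(\mfm,j)}=\delta^{\mfm}_{\mfl}\,\tfrac1{j!}D^{j}\z^\gamma$, whence $DP^{\mfl}_\gamma\z_{(\mfm,j)}=\delta^{\mfm}_{\mfl}\,\tfrac1{j!}D^{j+1}\z^\gamma$; on the other hand $D\z_{(\mfm,j)}=(j+1)\z_{(\mfm,j+1)}$ by \eqref{defD}, so $P^{\mfl}_\gamma D\z_{(\mfm,j)}=(j+1)\,\delta^{\mfm}_{\mfl}\,\tfrac1{(j+1)!}D^{j+1}\z^\gamma$, and the two coincide. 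Thus $[D,P^{\mfl}_\gamma]=0$, which proves \eqref{neg05}. Conceptually this expresses that the recentering flow $y\mapsto y+h$ (generated by $D$) commutes with the translation $a_\mfl\mapsto a_\mfl+t\sum_k\tfrac1{k!}(\partial_y^k\z^\gamma)(\cdot-y)^k$ (generated by $P^{\mfl}_\gamma$), so I do not expect a real obstacle here beyond keeping the index bookkeeping straight and making sure all the infinite sums act in a locally finite manner, which is guaranteed by \eqref{der02} and Lemma \ref{lem:ren01}.

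For the last assertion, assume $[\gamma]=[\gamma']=1$. By the grading information \eqref{tra11} applied to the operator $P^{\mfl}_\gamma$ and the input monomial $\z^{\gamma'}$, every monomial $\z^\delta$ occurring in $P^{\mfl}_\gamma\z^{\gamma'}$ satisfies $[\delta]=[\gamma']-[\gamma]+1=1$; moreover $P^{\mfl}_\gamma\z^{\gamma'}$ is a polynomial, hence a \emph{finite} combination $\sum_\delta c_\delta\z^\delta$ with all such $\delta\in\populated$, i.e. $P^{\mfl}_\gamma\z^{\gamma'}\in T$. Substituting this into the right-hand side of \eqref{neg05} and using linearity of $D^{k'}$ gives $\sum_{k'\in\N_0}\big(\tfrac1{k'!}D^{k'}(P^{\mfl}_\gamma\z^{\gamma'})\big)\partial_{(\mfl',k')}=\sum_\delta c_\delta\,P^{\mfl'}_\delta$, a finite linear combination of operators of the form \eqref{neg01} indexed by multi-indices $\delta$ with $[\delta]=1$. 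Hence the pre-Lie product of two generators \eqref{neg01} with $[\,\cdot\,]=1$ lies in their span, so the structure is closed.
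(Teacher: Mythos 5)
Your proof is correct and follows essentially the same route as the paper: both reduce \eqref{neg05} to the commutation $\big[D,\sum_{k}\big(\tfrac{1}{k!}D^{k}\z^{\gamma}\big)\partial_{(\mfl,k)}\big]=0$ (the paper's \eqref{neg06}); the paper then verifies this via $\partial_{(\mfl,k)}D=D\partial_{(\mfl,k)}+k\partial_{(\mfl,k-1)}$, the Leibniz rule and a telescoping sum over $k$, while you verify it by evaluating the commutator, itself a derivation, on the generators $\z_{(\mfm,j)}$ --- the same cancellation ($(j+1)\cdot\tfrac{1}{(j+1)!}=\tfrac{1}{j!}$) in a different guise. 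One small slip in your closure argument: by \eqref{tra11} a monomial $\z^{\delta}$ occurring in $P^{\mfl}_{\gamma}\z^{\gamma'}$ satisfies $[\delta]=[\gamma]+[\gamma']-1$, not $[\gamma']-[\gamma]+1$; the two coincide when $[\gamma]=[\gamma']=1$, so your conclusion is unaffected.
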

\begin{proof}
\eqref{neg05} can be reduced to showing the commutativity as endomorphisms
\begin{align}\label{neg06}
	\sum_{k\in\N_0} \big(\tfrac{1}{k!}D^k \z^\gamma\big) \partial_{(\mfl,k)} D = D \sum_{k\in\N_0} \big(\tfrac{1}{k!}D^k \z^\gamma\big) \partial_{(\mfl,k)}.
\end{align}	
We first note that
\begin{align*}
	\partial_{(\mfl,k)} D = D\partial_{(\mfl,k)} + k \partial_{(\mfl,k-1)},
\end{align*}
thus for any element $\pi\in\R[[\mfL\times \N_0]]$, which acts as an endomorphism by multiplication, it holds by the Leibniz rule
\begin{align*}
	\pi\partial_{(\mfl,k)} D = D \pi \partial_{(\mfl,k)} - (D\pi) \partial_{(\mfl,k)} + k \pi \partial_{(\mfl,k-1)}.
\end{align*}
When choosing $\pi = \tfrac{1}{k!}D^k\z^\gamma$ and summing in $k$, the last two terms cancel out, and thus \eqref{neg06} follows.
\end{proof}
Thanks to \eqref{neg05}, we may endow $T$ with a multi pre-Lie structure.
\begin{definition}
	For every $\mfl\in\mfL$, define $\btl : T\times T \to T$ as
	\begin{align}\label{neg04}
		\z^\beta \btl \z^\gamma := \sum_{\gamma'} \Big(\sum_{k\in\N_0} \big(\tfrac{1}{k!}D^k \z^\beta\big) \partial_{(\mfl,k)}\Big)_{\gamma'}^\gamma \z^{\gamma'}.
	\end{align}	
\end{definition}
\begin{corollary}
	For every $\mfl\in\mfL$, $\btl$ is a pre-Lie product in $T$, and $(T,\{\btl\}_{\mfl\in\mfL})$ is a multi pre-Lie algebra (cf. e.~g. \cite[Proposition 4.21]{BCCH} or \cite[Remark 2.10]{BL23}).
\end{corollary}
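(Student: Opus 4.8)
The plan is to deduce both the pre-Lie property of each $\btl$ and the multi pre-Lie property of the family $\{\btl\}_{\mfl\in\mfL}$ from the preceding Lemma \eqref{neg05} together with the vector field-type structure of the maps \eqref{neg01}. First I would observe that, by definition \eqref{neg04}, the operator $\rho_{\btl}(\z^\beta)$ on $T$ is (the restriction to $T$ of) the endomorphism $\sum_{k\in\N_0} \big(\tfrac{1}{k!}D^k \z^\beta\big)\partial_{(\mfl,k)}$ of $\R[[\mfL\times\N_0]]$; the fact that this is closed on $T$ (i.e.\ maps $T$ to $T$) for $[\beta]=1$ is exactly what was established via \eqref{tra11}, and Lemma \ref{lem:ren01} guarantees it is a well-defined endomorphism. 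So the whole statement reduces to an identity among commutators of these explicit vector field operators.

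For the pre-Lie identity of a single $\btl$: writing $V_\beta := \sum_{k}\big(\tfrac{1}{k!}D^k\z^\beta\big)\partial_{(\mfl,k)}$, the combinatorial content of \eqref{prelie} is that $[\rho_{\btl}(\z^\beta),\rho_{\btl}(\z^{\beta'})] = \rho_{\btl}(\z^\beta\btl\z^{\beta'} - \z^{\beta'}\btl\z^\beta)$, which in turn follows from the general principle (the same one used in the proof of Lemma after \eqref{go01}) that any space of operators of the form ``multiplication by $f$ composed with a derivation $\partial$'' closes into a pre-Lie algebra under $f\partial \trir g\partial := (f\partial g)\partial$. Concretely: each $V_\beta$ is a sum over $k$ of $f^{(k)}\partial_{(\mfl,k)}$ with $f^{(k)}=\tfrac{1}{k!}D^k\z^\beta$, and the family $\{f^{(k)}\}_k$ is generated from $f^{(0)}$ by the single derivation $D$; Lemma \eqref{neg05}/\eqref{neg06} says precisely that $V_\beta$ commutes with $D$ in the appropriate sense, which is exactly what is needed to compute the bracket $[V_\beta,V_{\beta'}]$ and recognize it as $V_{\z^\beta\btl\z^{\beta'}} - V_{\z^{\beta'}\btl\z^\beta}$. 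I would spell out this bracket computation using only the Leibniz rule and $\partial_{(\mfl,k)} D = D\partial_{(\mfl,k)} + k\partial_{(\mfl,k-1)}$, exactly as in the proof of \eqref{neg06}, and the pre-Lie identity \eqref{prelie} drops out after grouping the $D\pi$-terms against the shift terms.

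For the multi pre-Lie property I would invoke the characterization in \cite[Proposition 4.21]{BCCH} (equivalently \cite[Remark 2.10]{BL23}): one needs, for all $\mfl,\mfl'\in\mfL$ and $a,b,c\in T$,
\begin{align*}
	a\btl(b\,\blacktriangleright_{\mfl'}\,c) - (a\btl b)\,\blacktriangleright_{\mfl'}\,c = b\,\blacktriangleright_{\mfl'}\,(a\btl c) - (b\,\blacktriangleright_{\mfl'}\,a)\btl c .
\end{align*}
This is again a statement about the commutators $[\rho_{\btl}(a),\rho_{\blacktriangleright_{\mfl'}}(b)]$, and the point is that $\rho_{\btl}(\z^\beta)=\sum_k(\tfrac{1}{k!}D^k\z^\beta)\partial_{(\mfl,k)}$ and $\rho_{\blacktriangleright_{\mfl'}}(\z^{\beta'})=\sum_{k'}(\tfrac{1}{k'!}D^{k'}\z^{\beta'})\partial_{(\mfl',k')}$ involve \emph{different} partial derivatives when $\mfl\neq\mfl'$; the mixed cross-identity \eqref{neg05} (which is stated for arbitrary $\gamma,\gamma'$ and already incorporates both labels $\mfl,\mfl'$) is exactly tailored to produce the required symmetry. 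So the verification is: compute $\rho_{\btl}(a)\rho_{\blacktriangleright_{\mfl'}}(b) - \rho_{\blacktriangleright_{\mfl'}}(b)\rho_{\btl}(a)$ acting on $\z^\gamma$, use $\partial_{(\mfl,k)}$ and $\partial_{(\mfl',k')}$ commute, use \eqref{neg06} to move $D$-powers past the multiplications, and match with $\rho_{\btl}(a\blacktriangleright_{\mfl'}b)$ read off from \eqref{neg05}.

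\textbf{Main obstacle.} The only genuinely nontrivial input is already packaged in the preceding lemma (identity \eqref{neg05}); given that, the Corollary is bookkeeping. The step I expect to require the most care is checking that all the manipulations — in particular expanding $\rho_{\btl}$ of a product of basis elements and commuting infinite sums over $k$ past $D$ — are legitimate as honest endomorphisms of $\R[[\mfL\times\N_0]]$ rather than merely formal; this is where I would lean on the finiteness statements in Lemma \ref{lem:ren01} and \eqref{tra11} to justify that every sum encountered is locally finite when evaluated on a fixed monomial $\z^\gamma$, and on $[\gamma]=1$ to ensure the whole computation stays inside $T$.
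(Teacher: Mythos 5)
Your proposal is correct and follows essentially the route the paper takes: the paper's own justification is exactly that the operators \eqref{neg01} have a vector-field structure (multiplication composed with a derivation), which guarantees the (multi) pre-Lie associator symmetry, while the preceding Lemma \eqref{neg05} (proved via the commutation \eqref{neg06}) shows the product closes on elements of the form \eqref{neg01} with $[\cdot]=1$, so that $\btl$ is well defined on $T$ and inherits these identities; the well-definedness as endomorphisms is Lemma \ref{lem:ren01} together with \eqref{tra11}, as you say. Your write-up merely spells out what the paper leaves implicit, so no further comment is needed.
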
		
We consider now a new set of symbols indexed by $\renlie :=\populated\times\mfL$, which we will denote by $\rr^{(\gamma,\mfl)}$, and let
\begin{align}\label{basisR}
	R := \lspan_{(\gamma,\mfl)\in\renlie} \{\rr^{(\gamma,\mfl)}\}
\end{align}
be equipped with $\trineg: R\times R \to R$ given by
\begin{align}\label{neg07}
	\rr^{(\gamma,\mfl)}\trineg \rr^{(\gamma',\mfl')} = \sum_{\gamma'} \Big(\sum_{k\in\N_0} \big(\tfrac{1}{k!}D^k \z^\gamma\big) \partial_{(\mfl,k)}\Big)_{\beta'}^{\gamma'} \rr^{(\beta',\mfl')}.
\end{align}
\begin{corollary}
	$(R,\trineg)$ is a pre-Lie algebra.
\end{corollary}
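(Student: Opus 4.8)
The plan is to deduce that $(R,\trineg)$ is pre-Lie directly from the multi pre-Lie structure on $(T,\{\btl\}_{\mfl\in\mfL})$ established in the preceding corollary, via the general principle that a multi pre-Lie algebra indexed by a set $\mfL$ gives rise to an ordinary pre-Lie algebra on $T\otimes\R[\mfL]$ (here realized as $R$, with basis $\rr^{(\gamma,\mfl)}$ corresponding to $\z^\gamma\otimes\mfl$). First I would rewrite the defining formula \eqref{neg07} in the cleaner form $\rr^{(\gamma,\mfl)}\trineg \rr^{(\gamma',\mfl')} = \sum_{\beta'}(\z^\gamma\btl\z^{\gamma'})_{\beta'}\,\rr^{(\beta',\mfl')}$, i.e. $\trineg$ grafts using the $\mfl$-labelled product $\btl$ on the $T$-component and simply carries along the second label $\mfl'$ untouched. (I note in passing that the statement of \eqref{neg07} as printed has a minor index clash — both the outer summation variable and the bound variable in $\partial_{(\mfl,k)}$ are called $\gamma'$, while $\beta'$ appears unbound; the intended meaning is the displayed one, matching \eqref{neg04}.)

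Next I would verify the pre-Lie identity \eqref{prelie} for $\trineg$ by expanding both sides on basis elements $\rr^{(\gamma_1,\mfl_1)},\rr^{(\gamma_2,\mfl_2)},\rr^{(\gamma_3,\mfl_3)}$. Because the outer label is inert, every term on both sides of \eqref{prelie} carries the same outer label $\mfl_3$; stripping it off, the associator $\rr^{(\gamma_1,\mfl_1)}\trineg(\rr^{(\gamma_2,\mfl_2)}\trineg\rr^{(\gamma_3,\mfl_3)}) - (\rr^{(\gamma_1,\mfl_1)}\trineg\rr^{(\gamma_2,\mfl_2)})\trineg\rr^{(\gamma_3,\mfl_3)}$ becomes exactly the $T$-valued expression $\z^{\gamma_1}\btl[\mfl_2](\z^{\gamma_2}\btl\z^{\gamma_3}) - (\z^{\gamma_1}\btl[\mfl_1]\z^{\gamma_2})\btl[\mfl_2]\z^{\gamma_3}$ — wait, one must be careful which $\btl$ appears where. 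Precisely: in $\rr^{(\gamma_1,\mfl_1)}\trineg(\rr^{(\gamma_2,\mfl_2)}\trineg\rr^{(\gamma_3,\mfl_3)})$ the inner product uses label $\mfl_2$ (the label of $\rr^{(\gamma_2,\cdot)}$) and the outer uses $\mfl_1$; in $(\rr^{(\gamma_1,\mfl_1)}\trineg\rr^{(\gamma_2,\mfl_2)})\trineg\rr^{(\gamma_3,\mfl_3)}$ the inner uses $\mfl_1$ and the outer uses $\mfl_1$ as well (since the result of $\rr^{(\gamma_1,\mfl_1)}\trineg\rr^{(\gamma_2,\mfl_2)}$ carries label $\mfl_2$... no — it carries label $\mfl_2$, so the outer uses $\mfl_2$). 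Tracking the labels carefully, the associator equals $\z^{\gamma_1}\btl[\mfl_1](\z^{\gamma_2}\btl[\mfl_2]\z^{\gamma_3}) - (\z^{\gamma_1}\btl[\mfl_1]\z^{\gamma_2})\btl[\mfl_2]\z^{\gamma_3}$, tensored with the inert slot $\mfl_3$. Then \eqref{prelie} for $\trineg$ reduces termwise to the multi pre-Lie identity for $\{\btl\}$ with the roles of $(\gamma_1,\mfl_1)$ and $(\gamma_2,\mfl_2)$ swapped, which holds by the preceding corollary. The cleanest way to present this is to state and use the general lemma: if $(A,\{\triangleright_x\}_{x\in S})$ is a multi pre-Lie algebra, then $A\otimes\R^{(S)}$ with $(a\otimes x)\triangleright(b\otimes y) := (a\triangleright_x b)\otimes y$ is a pre-Lie algebra — this is exactly \cite[Remark 2.10]{BL23} / the content behind \cite[Proposition 4.21]{BCCH}, applied with $S=\mfL$.

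Finiteness is not an issue for the bare algebraic statement (the structure constants are automatically well-defined on the free vector space $R$ since $\btl$ maps into $T$, and for a fixed $\gamma$ only finitely many $k$ contribute when pairing against a monomial). The main obstacle — really the only thing requiring attention — is the bookkeeping of which copy of the family $\{\btl\}_{\mfl\in\mfL}$ acts at each node when expanding the nested products, i.e. making sure the label-matching in the associator lines up with the hypothesis of the multi pre-Lie identity (which swaps the two acting elements while keeping the passive element fixed). Once that correspondence is set up correctly, the proof is a one-line reduction.

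\begin{proof}
	We use the general fact that a multi pre-Lie algebra $(A,\{\triangleright_x\}_{x\in S})$ induces an ordinary pre-Lie structure on $A\otimes \R^{(S)}$, where $\R^{(S)}$ denotes the free vector space over $S$, via
	\begin{align*}
		(a\otimes x)\triangleright (b\otimes y) := (a\triangleright_x b)\otimes y;
	\end{align*}
	see \cite[Remark 2.10]{BL23} and \cite[Proposition 4.21]{BCCH}. Indeed, for $a,b,c\in A$ and $x,y,z\in S$,
	\begin{align*}
		(a\otimes x)\triangleright\big((b\otimes y)\triangleright(c\otimes z)\big) - \big((a\otimes x)\triangleright(b\otimes y)\big)\triangleright(c\otimes z)
		= \big(a\triangleright_x(b\triangleright_y c) - (a\triangleright_x b)\triangleright_y c\big)\otimes z,
	\end{align*}
	and the corresponding expression with $(a,x)$ and $(b,y)$ interchanged is $\big(b\triangleright_y(a\triangleright_x c) - (b\triangleright_y a)\triangleright_x c\big)\otimes z$; equality of the two is precisely the multi pre-Lie identity for $\{\triangleright_w\}_{w\in S}$.

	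Now identify $R$ with $T\otimes \R^{(\mfL)}$ by $\rr^{(\gamma,\mfl)}\leftrightarrow \z^\gamma\otimes \mfl$. Comparing \eqref{neg07} with \eqref{neg04}, the product $\trineg$ is exactly
	\begin{align*}
		\rr^{(\gamma,\mfl)}\trineg \rr^{(\gamma',\mfl')} = \sum_{\beta'}\big(\z^\gamma\btl\z^{\gamma'}\big)_{\beta'}\,\rr^{(\beta',\mfl')},
	\end{align*}
	i.e. it is the induced product associated to the multi pre-Lie algebra $(T,\{\btl\}_{\mfl\in\mfL})$ from the preceding corollary, under $S=\mfL$. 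Hence $(R,\trineg)$ is a pre-Lie algebra.
\end{proof}
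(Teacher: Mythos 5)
Your proof is correct and follows essentially the route the paper intends: the paper states this corollary without proof as an immediate consequence of the multi pre-Lie structure of $(T,\{\btl\}_{\mfl\in\mfL})$, via exactly the standard construction you spell out (a multi pre-Lie algebra induces a pre-Lie product on $T\otimes\R^{(\mfL)}\cong R$, cf.\ the cited \cite[Proposition 4.21]{BCCH} and \cite[Remark 2.10]{BL23}). Your reading of the typo in \eqref{neg07} and the identification of $\trineg$ with the induced product are both accurate.
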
	
Apart from the map $\rho_\trineg : R \to \textnormal{End}(R)$ given by \eqref{go01}, we use \eqref{neg04} to build the linear map $\rhotil_\trineg : R \to \textnormal{End}(T)$ as
\begin{align*}
	\rhotil_\trineg (\rr^{(\gamma,\mfl)}) := \z^\gamma \trineg_{\mfl},
\end{align*}
which is obviously a Lie algebra morphism with respect to composition of endomorphisms and uniquely extends to an algebra morphism $\rmU(R,\trineg)$ $\to$ $\textrm{End}(T)$. By Lemma \ref{lem:ren01}, $\rhotil_\trineg (\rr^{(\gamma,\mfl)})$ $\in$ $\textnormal{End}(T^*)$.
\begin{lemma}
	For every $U$ $\in$ $\rmU (R,\trineg)$,
	\begin{align}\label{pre02bis}
		\rhotil_\trineg (U) \z_{(\mfl,0)} = \epsilon(U) \z_{(\mfl,0)} + \proj_\mfl (U),
	\end{align}
	where $\epsilon : \rmU(R,\trineg) \to \R$ is the counit and $\proj_\mfl : \rmU (R,\trineg) \to T$ is given by $\proj_\mfl = \textnormal{proj}_\mfl \circ \proj_R$ with $\proj_R : \rmU (R,\trineg) \to R$ the rank-one projection with respect to the Guin-Oudom basis (cf. Lemma \ref{lem:2.6} (ii)) and $\textnormal{proj}_\mfl : R \to T$ given in coordinates by
	\begin{equation*}
		\textnormal{proj}_\mfl (\rr^{(\gamma',\mfl')}) = \z^{\gamma'} \delta_\mfl^{\mfl'}.
	\end{equation*}  
	Moreover, for every $U \in \rmU(R,\trineg)$,
	\begin{align}\label{pre02}
		\rhotil_\trineg (U) (\pi D \pi') = \sum_{(U)} (\rhotil_\trineg (U_{(1)}) \pi) D (\rhotil_\trineg (U_{(2)})).
	\end{align}
\end{lemma}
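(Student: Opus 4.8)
The plan is to prove both identities by induction over $U\in\rmU(R,\trineg)$, exploiting that this algebra is generated by $R$: it then suffices to verify each identity for $U=1$ and, assuming it for a generic $U$, to deduce it for $\rr^{(\gamma,\mfl')}U$. Throughout I would use that $\rhotil_\trineg$ is an algebra morphism, so that $\rhotil_\trineg(\rr^{(\gamma,\mfl')}U)=\rhotil_\trineg(\rr^{(\gamma,\mfl')})\,\rhotil_\trineg(U)$, and that $\cop$ is the algebra-morphism extension of $\rr\mapsto\rr\otimes1+1\otimes\rr$, so that, as in \eqref{go08},
\[
	\cop\big(\rr^{(\gamma,\mfl')}U\big)=\sum_{(U)}\rr^{(\gamma,\mfl')}U_{(1)}\otimes U_{(2)}+U_{(1)}\otimes\rr^{(\gamma,\mfl')}U_{(2)}.
\]

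For \eqref{pre02bis}: the case $U=1$ is immediate, since $\rhotil_\trineg(1)=\id$, $\epsilon(1)=1$ and $\proj_\mfl(1)=0$. On a single generator I would compute directly from \eqref{neg04} that
\[
	\rhotil_\trineg(\rr^{(\gamma,\mfl')})\z_{(\mfl,0)}=\z^\gamma\trineg_{\mfl'}\z_{(\mfl,0)}=\sum_{k\in\N_0}\big(\tfrac1{k!}D^k\z^\gamma\big)\,\partial_{(\mfl',k)}\z_{(\mfl,0)}=\delta_{\mfl'}^{\mfl}\,\z^\gamma=\proj_\mfl(\rr^{(\gamma,\mfl')}),
\]
because $\partial_{(\mfl',k)}\z_{(\mfl,0)}=\delta_{\mfl'}^{\mfl}\delta_k^0$ and $\rr^{(\gamma,\mfl')}$ is of Guin--Oudom rank one; this matches \eqref{pre02bis} as $\epsilon(\rr^{(\gamma,\mfl')})=0$. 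The same computation, read off from \eqref{neg04} and \eqref{neg07}, also yields the intertwining identity $\rhotil_\trineg(r)\,\textnormal{proj}_\mfl(r')=\textnormal{proj}_\mfl(r\trineg r')$ for $r,r'\in R$, which extends by the algebra-morphism property of $\rhotil_\trineg$ and $\rho_\trineg$ to $\rhotil_\trineg(U)\,\textnormal{proj}_\mfl(r')=\textnormal{proj}_\mfl(\rho_\trineg(U)r')$ for all $U\in\rmU(R,\trineg)$. The induction step is then carried out by applying $\rhotil_\trineg(\rr^{(\gamma,\mfl')})$ to the inductive expression $\epsilon(U)\z_{(\mfl,0)}+\proj_\mfl(U)$, using the two displayed facts, and recognizing the outcome as $\textnormal{proj}_\mfl\circ\proj_R$ applied to \eqref{go12} for $(R,\trineg)$, namely to $\proj_R(\rr^{(\gamma,\mfl')}U)=\epsilon(U)\rr^{(\gamma,\mfl')}+\rho_\trineg(\rr^{(\gamma,\mfl')})\proj_R(U)$, together with $\epsilon(\rr^{(\gamma,\mfl')}U)=0$.

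For \eqref{pre02}: the case $U=1$ is trivial. The heart of the matter is the single-generator case, which asserts that each $\z^\gamma\trineg_{\mfl}=\rhotil_\trineg(\rr^{(\gamma,\mfl)})$ acts as a derivation of the $D$-product, i.e.
\[
	\z^\gamma\trineg_{\mfl}\,(\pi D\pi')=\big(\z^\gamma\trineg_{\mfl}\pi\big)D\pi'+\pi D\big(\z^\gamma\trineg_{\mfl}\pi'\big).
\]
To establish this I would expand $\z^\gamma\trineg_{\mfl}=\sum_k\big(\tfrac1{k!}D^k\z^\gamma\big)\partial_{(\mfl,k)}$, use that each $\partial_{(\mfl,k)}$ is a derivation of $\R[[\mfL\times\N_0]]$ together with the commutation relation $\partial_{(\mfl,k)}D=D\partial_{(\mfl,k)}+k\,\partial_{(\mfl,k-1)}$ from the proof of the previous lemma; the terms carrying $\partial_{(\mfl,k)}\pi'$ and $\partial_{(\mfl,k-1)}\pi'$ recombine into $\pi\sum_k\big(\tfrac1{k!}D^k\z^\gamma\big)\partial_{(\mfl,k)}D\pi'$, which by the commutation \eqref{neg06} of $\z^\gamma\trineg_{\mfl}$ with $D$ is exactly $\pi D(\z^\gamma\trineg_{\mfl}\pi')$. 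Granting this, the induction step is bookkeeping: apply $\rhotil_\trineg(\rr^{(\gamma,\mfl)})$ to the inductive right-hand side $\sum_{(U)}(\rhotil_\trineg(U_{(1)})\pi)D(\rhotil_\trineg(U_{(2)})\pi')$, note each summand has the form $\tilde\pi D\tilde\pi'$, apply the derivation property, and collect the two resulting families of terms using $\rhotil_\trineg(\rr^{(\gamma,\mfl)})\rhotil_\trineg(U_{(i)})=\rhotil_\trineg(\rr^{(\gamma,\mfl)}U_{(i)})$ and the coproduct formula for $\rr^{(\gamma,\mfl)}U$ recorded in the first paragraph. The only non-mechanical step is precisely this single-generator case of \eqref{pre02}: it requires the multiplicative factor $\tfrac1{k!}D^k\z^\gamma$, the derivations $\partial_{(\mfl,k)}$ and the derivation $D$ to balance exactly, and that balance is exactly what the relations $\partial_{(\mfl,k)}D=D\partial_{(\mfl,k)}+k\partial_{(\mfl,k-1)}$ and \eqref{neg06} deliver; everything else propagates through the Hopf-algebra morphism properties of $\rhotil_\trineg$, $\rho_\trineg$ and $\cop$.
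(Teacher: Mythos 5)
Your proposal is correct. For \eqref{pre02} it coincides with the paper's argument: the single\dash generator derivation identity $\rhotil_\trineg(r)(\pi D\pi')=(\rhotil_\trineg(r)\pi)D\pi'+\pi D(\rhotil_\trineg(r)\pi')$, obtained from the Leibniz rule for the $\partial_{(\mfl,k)}$ together with the commutation \eqref{neg06}, followed by induction on concatenation using the coproduct formula and the algebra morphism property of $\rhotil_\trineg$. For \eqref{pre02bis} your route is genuinely different from the paper's: there, one first proves the explicit operator formula \eqref{goD02} for $\rhotil_\trineg$ evaluated on Guin--Oudom products $\rr^{(\gamma_1,\mfl_1)}\pprod_\trineg\cdots\pprod_\trineg\rr^{(\gamma_l,\mfl_l)}$ (by an induction on $U\pprod_\trineg\rr^{(\gamma,\mfl)}$ parallel to the proof of \eqref{goD}), and then reads off \eqref{pre02bis} by applying that formula to the single variable $\z_{(\mfl,0)}$, where every term with $l\geq 2$ dies because $\partial_{(\mfl_1,k_1)}\z_{(\mfl,0)}$ is a constant --- this is exactly why $\proj_\mfl$ appears as the rank\dash one projection in the Guin--Oudom basis. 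You instead run a direct induction on concatenation products $\rr^{(\gamma,\mfl')}U$, using the intertwining relation $\rhotil_\trineg(U)\circ\textnormal{proj}_\mfl=\textnormal{proj}_\mfl\circ\rho_\trineg(U)$ on $R$ (checked on generators and propagated by the algebra morphism properties) together with the already established projection formula \eqref{go12} applied to $(R,\trineg)$. This is valid and arguably more economical for \eqref{pre02bis} taken in isolation, since it recycles the abstract machinery of Section \ref{sec::2} rather than recomputing the action of $\rhotil_\trineg$ on the Guin--Oudom basis; what it does not deliver is the formula \eqref{goD02} itself, which the paper uses again later (e.g. for \eqref{*p33} in the proof of Theorem \ref{th:trp}), so the paper's detour is not redundant in context.
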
	
\begin{proof}
	We start showing \eqref{pre02bis}. It is enough to show it for $U = \mathsf{R}_I$, a Guin-Oudom basis element over the basis \eqref{basisR}. Then \eqref{pre02bis} follows from the identity
	\begin{align}
		&\rhotil_\trineg \big(\rr^{(\gamma_1,\mfl_1)}\pprod_\trineg \cdots \pprod_\trineg \rr^{(\gamma_l,\mfl_l)}\big)\nonumber\\
		&\quad=  \sum_{k_1,...,k_l \in \N_0} \big(\tfrac{1}{k_1 !} D^{k_1}\z^{\gamma_1}\big) \cdots  \big(\tfrac{1}{k_l !} D^{k_l}\z^{\gamma_l}\big) \partial_{(\mfl_l,k_l)} \cdots \partial_{(\mfl_1,k_1)}.\label{goD02}
	\end{align}
	As for \eqref{goD}, it is enough to show that for all $U\in \rmU(R,\trineg)$ and $(\gamma,\mfl)\in \renlie$
	\begin{equation*}
		\rhotil_\trineg (U\pprod_\trineg \rr^{(\gamma,\mfl)}) = \sum_{k\in\N_0} \big(\tfrac{1}{k!} D^k \z^\gamma\big) \rhotil_\trineg (U) \partial_{(\mfl,k)}.
	\end{equation*}
	This is shown inductively in $U$; for $U=1$ is trivial, whereas assuming it for $U$ and given an arbitrary $b\in R$, equation \eqref{go06}, the commutation rule \eqref{neg06}, the Leibniz rule and the algebra morphism property of $\rhotil_\trineg$,
	\begin{align*}
		\rhotil_\trineg (bU\pprod_\trineg \rr^{(\gamma,\mfl)}) &= \rhotil_\trineg(b)\sum_{k\in\N_0} \big(\tfrac{1}{k!} D^k \z^\gamma\big) \rhotil_\trineg (U) \partial_{(\mfl,k)} - \sum_{k\in\N_0} \big(\tfrac{1}{k!} D^k (b\trineg \rr^{(\gamma,\mfl)})\big) \partial_{(\mfl,k)}\\
		&=   \sum_{k\in\N_0} \big(\tfrac{1}{k!} D^k \z^\gamma\big) \rhotil_\trineg(b)\rhotil_\trineg (U) \partial_{(\mfl,k)}\\
		&=   \sum_{k\in\N_0} \big(\tfrac{1}{k!} D^k \z^\gamma\big) \rhotil_\trineg(bU) \partial_{(\mfl,k)}.
	\end{align*}
	\medskip
	
	For \eqref{pre02}, we argue by induction as in Lemma \ref{lem:go01} above. The case $U=1$ is trivial. We now assume it for $U$ and take $r\in R$. The Leibniz rule combined with the commutation \eqref{neg06} yields
	\begin{align*}
		\rhotil_\trineg (r) (\pi D \pi') = (\rhotil_\trineg (r)\pi) D \pi' + \pi D (\rhotil_\trineg (r)\pi').
	\end{align*}
	Using this, the algebra morphism property of $\tilde{\rho}_\trineg$ and the induction hypothesis,
	\begin{align*}
		\mbox{} & \rhotil_\trineg (rU)(\pi D \pi')\\
		 &= \rhotil_\trineg(r)\rhotil_\trineg(U)(\pi D \pi')\\
		 &= \rhotil_\trineg(r) \sum_{(U)} (\rhotil_\trineg (U_{(1)}) \pi) D (\rhotil_\trineg (U_{(2)})\pi') \\
		 &= \sum_{(U)} \big(\rhotil_\trineg(r)(\rhotil_\trineg (U_{(1)}) \pi) D (\rhotil_\trineg (U_{(2)})\pi') + (\rhotil_\trineg (U_{(1)}) \pi) D (\rhotil_\trineg(r)\rhotil_\trineg (U_{(2)})\pi')\big)\\
		 &= \sum_{(U)} \big((\rhotil_\trineg (rU_{(1)}) \pi) D (\rhotil_\trineg (U_{(2)})\pi') + (\rhotil_\trineg (U_{(1)}) \pi) D (\rhotil_\trineg (rU_{(2)})\pi')\big).
	\end{align*}
\end{proof}	
\begin{remark}
	Identity \eqref{pre02} is the analogue, on the dual side, of the cointeraction property \cite[Lemma 20]{BCFP}.
\end{remark}
\subsection{Some relevant groups of translations}\label{subsec::tran02}
We now want to study the group of characters generated via exponential maps as in Subsection \ref{subsec::2.3}. These, when well-defined, take the form of translations in $T^*$: Let
\begin{align}\label{tra31}
	\mathbf{c} = \sum_{(\gamma,\mfl)\in \renlie} \mathbf{c}_{(\gamma,\mfl)}\rr^{(\gamma,\mfl)}\in R^*,
\end{align}
and define for every $\mfl\in\mfL$
\begin{align}\label{tra32}
	c_{\mfl} := \sum_{\gamma\in \populated} \mathbf{c}_{(\gamma,\mfl)} \z^\gamma \in T^*.
\end{align}
Then \eqref{pre02bis} and \eqref{pre02} extend to
\begin{align}
	\tilde{\rho}_\trineg (\exp_{\trineg}(\mathbf{c})) \z_{(\mfl,0)} &= \z_{(\mfl,0)} + c_\mfl,\label{tra21}\\
	\tilde{\rho}_\trineg (\exp_{\trineg}(\mathbf{c})) (\pi D \pi') &= \big(\tilde{\rho}_\trineg (\exp_{\trineg}(\mathbf{c})) \pi\big)\, D\, \big(\tilde{\rho}_\trineg (\exp_{\trineg}(\mathbf{c})) \pi'\big).\label{tra22}
\end{align}
Our task now is to find a suitable graded structure that allows us to implement the strategy of Subsections \ref{subsec::2.2} and \ref{subsec::2.3}. Thanks to \eqref{tra11}, the pre-Lie algebra $(R,\trineg)$ is graded with respect to the following gradings:
\begin{align}
	(\gamma',\mfl') & \longmapsto \sum_{k'\in\N_0^d} \gamma'(\mfl,k') - \delta_\mfl^{\mfl'}\label{grad01}\\
	(\gamma',\mfl') & \longmapsto \sum_{(\mfl',k')\in\mfL\times \N_0^d}\hspace*{-10pt} k' \gamma'(\mfl',k') = \length{\gamma'} - [\gamma'] = \length{\gamma'} - 1.\label{grad02}
\end{align}
Note that \eqref{grad01} works as a grading for all $\mfl\in\mfL$; in addition, thanks to $[\gamma']=1$, \eqref{grad02} is obtained from \eqref{grad01} after summation in $\mfl'$. However, none of these gradings is strictly positive: In fact, \eqref{grad01} vanishes for $\gamma'= e_{(\mfl,0)}$. This is of course expected since
\begin{equation*}
	\rho_\trineg (\rr^{(e_{(\mfl,0)}, \mfl)}) = \sum_{k\in \N_0} (\tfrac{1}{k!} D^k \z_{(\mfl,0)}) \partial_{(\mfl,k)} = \sum_{k\in \N_0} \z_{(\mfl,k)} \partial_{(\mfl,k)}
\end{equation*}
which almost acts like an identity map, but makes it impossible to have a strictly positive grading unless we impose conditions under which $(e_{(\mfl,0)}, \mfl)$ is not allowed in the pre-Lie algebra. In this subsection we provide two different ways of doing this.

\medskip

We first consider the set
\begin{align*}
	\renlietwo := \{(\gamma,\mfl)\in\renlie\,|\,\length{\gamma}\geq 2\},
\end{align*}
and accordingly
\begin{align}\label{ren2}
	\Rentwo := \lspan_{(\gamma,\mfl)\in\renlietwo} \{\rr^{(\gamma,\mfl)}\}.
\end{align}
\begin{lemma}
	$(\Rentwo,\trineg)$ is a $1$-connected graded pre-Lie algebra with respect to $\grad{\gamma,\mfl}:=\length{\gamma} - 1$.
\end{lemma}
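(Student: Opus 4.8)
The plan is to verify the three assertions in order: first that $\trineg$ is well-defined on $\Rentwo$ (i.e.\ closed), then that it is graded with respect to $\grad{\gamma,\mfl} = \length{\gamma} - 1$, and finally that the grading is strictly positive and $1$-connected.

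For closure, I would invoke \eqref{tra11}: if $\rr^{(\gamma_1,\mfl_1)}\trineg \rr^{(\gamma_2,\mfl_2)}$ has a nonzero $\rr^{(\beta,\mfl_2)}$-component, then $\length{\beta} = \length{\gamma_1} - 1 + \length{\gamma_2}$ and $[\beta] = [\gamma_1] - 1 + [\gamma_2]$. Since the inputs lie in $\Rentwo$, we have $[\gamma_1] = [\gamma_2] = 1$ (recall $\populated$ consists of multi-indices with $[\cdot]=1$), hence $[\beta] = 1$, so $\rr^{(\beta,\mfl_2)}\in R$; moreover $\length{\beta} = \length{\gamma_1} - 1 + \length{\gamma_2} \geq 2 - 1 + 2 = 3 \geq 2$, so in fact $(\beta,\mfl_2)\in \renlietwo$ and $\rr^{(\beta,\mfl_2)}\in \Rentwo$. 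The pre-Lie identity itself is inherited from $(R,\trineg)$ (the preceding corollary), once we know the product lands back in $\Rentwo$. For the grading claim, the same identity from \eqref{tra11} gives $\length{\beta} - 1 = (\length{\gamma_1} - 1) + (\length{\gamma_2} - 1)$ whenever the structure constant is nonzero, which is exactly $\grad{\beta,\mfl_2} = \grad{\gamma_1,\mfl_1} + \grad{\gamma_2,\mfl_2}$; thus $\Rentwo_\kappa \trineg \Rentwo_{\kappa'}\subset \Rentwo_{\kappa+\kappa'}$, where $\Rentwo_\kappa := \lspan\{\rr^{(\gamma,\mfl)}\,|\,\length{\gamma}-1 = \kappa\}$. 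Note this grading is precisely the restriction of \eqref{grad02} to $\renlietwo$, so nothing new is needed beyond already-established facts.

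Finally, $1$-connectedness: for $(\gamma,\mfl)\in\renlietwo$ we have $\length{\gamma}\geq 2$, hence $\grad{\gamma,\mfl} = \length{\gamma} - 1 \geq 1 > 0$, and the value $1$ is attained (take any $\gamma\in\populated$ with $\length{\gamma}=2$, which exists — e.g.\ $\gamma = e_{(\mfl',0)} + e_{(\mfl',1)}$ has $[\gamma] = 1 + 0 = 1$ and $\length{\gamma}=2$). So $\min_\kappa \kappa = 1 = \kappamin$, giving $1$-connectedness.

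I don't anticipate a genuine obstacle here: the statement is essentially a bookkeeping corollary of \eqref{tra11}, and the only thing to be careful about is that the restriction to $\length{\gamma}\geq 2$ is exactly what is needed to escape the degenerate element $\rr^{(e_{(\mfl,0)},\mfl)}$ that obstructs positivity of the grading in the full algebra $R$. The mild point worth a sentence is checking that $\renlietwo$ is nonempty with an element of minimal degree, so that $\kappamin=1$ is genuinely realized rather than merely a lower bound.

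\begin{proof}
	We first check that $\trineg$ restricts to a well-defined product on $\Rentwo$. Suppose $(\gamma_1,\mfl_1), (\gamma_2,\mfl_2)\in\renlietwo$ and that the coefficient of $\rr^{(\beta,\mfl_2)}$ in $\rr^{(\gamma_1,\mfl_1)}\trineg \rr^{(\gamma_2,\mfl_2)}$ is nonzero. By \eqref{neg07} this coefficient is $\big(\sum_{k\in\N_0}(\tfrac1{k!}D^k\z^{\gamma_1})\partial_{(\mfl_1,k)}\big)_{\gamma_2}^{\beta}$, so by \eqref{tra11} (applied with the roles $\gamma' = \gamma_1$, $\mfl' = \mfl_1$, $\gamma = \gamma_2$) we have
	\begin{align*}
		\length{\beta} = \length{\gamma_1} - 1 + \length{\gamma_2}, \qquad [\beta] = [\gamma_1] - 1 + [\gamma_2].
	\end{align*}
	Since $\gamma_1,\gamma_2\in\populated$, we have $[\gamma_1] = [\gamma_2] = 1$, hence $[\beta] = 1$, i.e.\ $\beta\in\populated$; moreover $\length{\beta} = \length{\gamma_1} + \length{\gamma_2} - 1\geq 2 + 2 - 1 = 3\geq 2$, so $(\beta,\mfl_2)\in\renlietwo$. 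Thus $\trineg$ maps $\Rentwo\times\Rentwo$ into $\Rentwo$, and the pre-Lie identity \eqref{prelie} is inherited from $(R,\trineg)$.

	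For the grading, the first displayed identity above also gives $\length{\beta} - 1 = (\length{\gamma_1} - 1) + (\length{\gamma_2} - 1)$ whenever the structure constant is nonzero. Setting, for $\kappa\in\N$,
	\begin{align*}
		(\Rentwo)_\kappa := \lspan_{\substack{(\gamma,\mfl)\in\renlietwo\\ \length{\gamma} - 1 = \kappa}} \{\rr^{(\gamma,\mfl)}\},
	\end{align*}
	we obtain $\Rentwo = \bigoplus_{\kappa\geq 1}(\Rentwo)_\kappa$ and $(\Rentwo)_\kappa \trineg (\Rentwo)_{\kappa'}\subset (\Rentwo)_{\kappa + \kappa'}$, so $(\Rentwo,\trineg)$ is graded under $\grad{\gamma,\mfl} = \length{\gamma} - 1$.

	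Finally, for every $(\gamma,\mfl)\in\renlietwo$ we have $\length{\gamma}\geq 2$, so $\grad{\gamma,\mfl} = \length{\gamma} - 1\geq 1$; and the value $1$ is attained, since for any $\mfl\in\mfL$ the multi-index $\gamma = e_{(\mfl,0)} + e_{(\mfl,1)}$ satisfies $[\gamma] = (1-0) + (1-1) = 1$ and $\length{\gamma} = 2$, so $(\gamma,\mfl)\in\renlietwo$ with $\grad{\gamma,\mfl} = 1$. Hence $\min_\kappa \kappa = 1$ and $(\Rentwo,\trineg)$ is $1$-connected.
\end{proof}
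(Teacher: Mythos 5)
Your proof is correct and takes essentially the same route as the paper's, which simply cites the first line of \eqref{tra11} for closure under $\trineg$ and then notes $\length{\gamma}-1\geq 1$; you merely spell out the details, including the (omitted but worthwhile) check that the minimal degree $1$ is actually attained. One cosmetic slip: the coefficient of $\rr^{(\beta,\mfl_2)}$ should be written $\big(\sum_{k}(\tfrac{1}{k!}D^k\z^{\gamma_1})\partial_{(\mfl_1,k)}\big)_{\beta}^{\gamma_2}$ (subscript the output index, superscript the input), which is the placement consistent with the conclusion you then correctly draw from \eqref{tra11}.
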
	
\begin{proof}
	The first line in \eqref{tra11} implies that $\Rentwo$ is closed under $\trineg$, and thus it is a pre-Lie subalgebra of $R$. Moreover, for every $\gamma\in \populated$ with $\length{\gamma}\geq 2$
	\begin{align*}
		\length{\gamma}-1 \geq 1.
	\end{align*}
\end{proof}	
We now apply the Guin-Oudom procedure (Subsection \ref{subsec::2.1}) and the transposition (Subsection \ref{subsec::2.2}), which give rise to the group $\mathcal{G}(\Rentwo,\trineg)$ of multiplicative functionals over $\R[\renlietwo]$ (Subsection \ref{subsec::2.3}). Given an element $\mathbf{c}\in \Rentwo^*$, we have $\rho_\trineg(\exp_\trineg(\mathbf{c})) \in \textnormal{End}(\Rentwo^*)$, cf. Lemma \ref{lem:2.9}. In addition, the following holds.
\begin{lemma}\label{cor:tra01}
	For every $\mathbf{c}\in \Rentwo^*$, $\rhotil_\trineg (\exp_\trineg(\mathbf{c}))\in \textnormal{End}(T^*)$ and it satisfies \eqref{tra21} and \eqref{tra22}.
\end{lemma}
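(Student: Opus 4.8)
The plan is to leverage the abstract machinery of Subsections \ref{subsec::2.2} and \ref{subsec::2.3}, applied now to the graded pre-Lie algebra $(\Rentwo,\trineg)$ together with the module map $\rhotil_\trineg$. First I would check that $\rhotil_\trineg(\exp_\trineg(\mathbf{c}))\in\textnormal{End}(T^*)$. By the previous lemma, $\rhotil_\trineg(\rr^{(\gamma,\mfl)})\in\textnormal{End}(T^*)$, so the restriction of $\rhotil_\trineg$ to $\Rentwo$ gives a Lie algebra morphism $\Rentwo\to\textnormal{End}(T^*)$; the point is to verify that the finiteness hypothesis \eqref{fin04} of Lemma \ref{lem:com01} holds for this module, so that $\rhotil_\trineg(\exp_\trineg(\mathbf{c}))$ makes sense as an endomorphism of $T^*$ (not merely of $T$) via Lemma \ref{lem:2.13}. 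Here I would invoke Remark \ref{rem:fin10}: the case $\length{I}=1$ is exactly \eqref{fin05}, which follows from Lemma \ref{lem:ren01}; and the length bound \eqref{fin06} follows from the compatibility of $\rhotil_\trineg$ with the gradings. Concretely, $T^*$ is graded by $\length{\cdot}$ (bounded below by $1$), $\Rentwo$ is graded by $\grad{\gamma,\mfl}=\length{\gamma}-1\geq 1$, and the second line of \eqref{tra11} shows $(\rhotil_\trineg(\rr^{(\gamma,\mfl)}))_\beta^\gamma\neq 0\implies\length{\beta}=(\length{\gamma}-1)+\length{\gamma}$, i.e. the grading compatibility of Remark \ref{rem:fin10}; since the source grading is strictly positive and the target is bounded below, \eqref{coe01} gives \eqref{fin06}.

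Next I would establish \eqref{tra21}. This is the "resummation" of \eqref{pre02bis}: writing $\exp_\trineg(\mathbf{c})=\sum_{I} \mathbf{c}^I \mathsf{R}_I$ in the Guin-Oudom basis, apply $\rhotil_\trineg$ and evaluate at $\z_{(\mfl,0)}$; \eqref{pre02bis} gives $\rhotil_\trineg(\mathsf{R}_I)\z_{(\mfl,0)} = \delta_I^0\,\z_{(\mfl,0)} + \proj_\mfl(\mathsf{R}_I)$, and only the length-zero and length-one $I$ contribute. The length-zero term gives $\z_{(\mfl,0)}$, and summing the length-one terms $\sum_{(\gamma,\mfl')\in\renlietwo}\mathbf{c}_{(\gamma,\mfl')}\proj_\mfl(\rr^{(\gamma,\mfl')}) = \sum_\gamma \mathbf{c}_{(\gamma,\mfl)}\z^\gamma = c_\mfl$ by the definition \eqref{tra32} and the coordinate formula for $\textnormal{proj}_\mfl$. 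I would note that the series is well-defined as an identity in $T^*$ precisely because of the finiteness property just verified.

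For \eqref{tra22} I would similarly resum \eqref{pre02}. Since $\exp_\trineg(\mathbf{c})$ is grouplike in the (completed) dual Hopf algebra, cf. \eqref{grouplike} — i.e. $\cop\exp_\trineg(\mathbf{c}) = \exp_\trineg(\mathbf{c})\otimes\exp_\trineg(\mathbf{c})$ — the Sweedler sum in \eqref{pre02} collapses: $\rhotil_\trineg(\exp_\trineg(\mathbf{c}))(\pi D\pi') = (\rhotil_\trineg(\exp_\trineg(\mathbf{c}))\pi)\,D\,(\rhotil_\trineg(\exp_\trineg(\mathbf{c}))\pi')$, which is exactly \eqref{tra22}. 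The only care needed is that, when $\pi,\pi'$ range over $T$ and their images under $\rhotil_\trineg(\exp_\trineg(\mathbf{c}))$ land in $T^*$, the bilinear pairing $(\cdot)\,D\,(\cdot)$ still makes sense at the level of formal series; this again reduces to the same graded finiteness, since $D$ preserves length up to the shift in \eqref{der11}–\eqref{der12} and each fixed $\beta$-component of the product involves only finitely many contributions.

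The main obstacle is the bookkeeping for well-definedness on $T^*$ rather than $T$: all three identities \eqref{tra21}, \eqref{tra22} and the endomorphism claim are formal-series statements, and the substance of the proof is checking that the relevant gradings on $\Rentwo$ (namely $\grad{\gamma,\mfl}=\length{\gamma}-1$) are strictly positive while the target gradings on $T^*$ and on $R^*$ are bounded below, so that the abstract finiteness machinery of Lemma \ref{lem:com01} and Remark \ref{rem:fin10} applies. Once that is in place, \eqref{tra21} and \eqref{tra22} are immediate resummations of \eqref{pre02bis} and \eqref{pre02} using grouplikeness. I expect the bulk of the write-up to be the verification of \eqref{fin06} via the gradings \eqref{grad01}–\eqref{grad02} and the implication \eqref{tra11}.
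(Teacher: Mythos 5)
Your proposal is correct and follows essentially the same route as the paper: the paper's proof likewise reduces everything to the finiteness machinery of Remark \ref{rem:fin10} and Lemma \ref{lem:2.13}, using the grading $\length{\cdot}-[\cdot]$ on $T$ (which on $\populated$ equals $\length{\cdot}-1$, so your choice of $\length{\cdot}$ is the same up to a harmless shift) together with the compatibility from \eqref{tra11}. Your explicit resummations of \eqref{pre02bis} and \eqref{pre02} via grouplikeness are exactly how the paper obtains \eqref{tra21} and \eqref{tra22} in the surrounding text; the only blemish is the notational clash of $\gamma$ playing two roles in your displayed implication, which does not affect the argument.
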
	
\begin{proof}
	We know from Lemma \ref{lem:ren01} that $\rhotil_\trineg(\rr^{(\gamma',\mfl')}) \in \textnormal{End} (T^*)$. We also know from \eqref{tra11} that
	\begin{align*}
		\big(\rhotil_\trineg (\rr^{(\gamma',\mfl')})\big)_\beta^\gamma \neq 0\,\implies \length{\beta} - [\beta] = \length{\gamma'} - [\gamma'] + \length{\gamma} -[\gamma].
	\end{align*}
	Since $\length{\cdot} - [\cdot] = \length{\cdot} - 1 \geq 0$, we are in the context of Remark \ref{rem:fin10}, so Lemma \ref{lem:2.13} applies.
\end{proof}	

\medskip

A second strategy to prevent the infiniteness at the level of the Hopf algebra is as follows: We choose a subset $\emptyset\neq\subL\subset \mfL$ and define
\begin{align*}
	\lnh \beta \rnh_\subL := \length{\beta} - \sum_{(\mfl,k)\in \subL\times \N_0} \beta(\mfl,k) = \sum_{(\mfl,k)\in (\mfL\setminus \subL)\times \N_0} \beta (\mfl,k).
\end{align*}
Let
\begin{align*}
	\mcR_\subL &:= \{(\gamma,\mfl)\in \populated \times \subL\,|\, \lnh \gamma\rnh_{\subL} >0\},\\
	\hat{\mcR}_\subL &:= \{(\gamma,\mfl)\in \mcR_\subL\,|\, \length{\gamma} = \lnh \gamma \rnh_{\subL}\},
\end{align*}
and define $R_\subL$ and $\hat{R}_\subL$ accordingly. Roughly speaking, $\mcR_\subL$ encodes the transformations of the form \eqref{neg01} which always increase the contributions from $\mfL\setminus\subL$, whereas $\hat{\mcR}_\subL$ represent those which in addition strictly decrease (by $1$) the contributions from $\subL$. Since $\subL\neq \emptyset$, these restrictions prevent $(e_{(\mfl,0)},\mfl)$.
\begin{lemma}
	For every $\emptyset\neq\subL\subset \mfL$, $(R_\subL, \trineg)$ is a $1$-connected graded pre-Lie algebra with respect to $\grad{\gamma,\mfl} = \lnh\gamma\rnh_{\subL}$.
\end{lemma}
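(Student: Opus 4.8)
Here is the plan. Since the Corollary above already establishes that $(R,\trineg)$ is a pre-Lie algebra, and $R_\subL=\lspan_{(\gamma,\mfl)\in\mcR_\subL}\{\rr^{(\gamma,\mfl)}\}$ is a linear subspace spanned by part of the basis of $R$, the whole statement reduces — exactly as in the proof that $(\Rentwo,\trineg)$ is a $1$-connected graded pre-Lie algebra — to three checks: (i) $R_\subL$ is stable under $\trineg$, so that the pre-Lie identity \eqref{prelie} is inherited by restriction and $R_\subL$ is a pre-Lie subalgebra; (ii) the basis elements $\rr^{(\gamma,\mfl)}$, $(\gamma,\mfl)\in\mcR_\subL$, are homogeneous for the assignment $\grad{\gamma,\mfl}=\lnh\gamma\rnh_\subL$ in a way that adds up under $\trineg$; and (iii) the resulting set of degrees is contained in $\{1,2,\dots\}$ and has minimum $1$.

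For (i) and (ii) I would fix $(\gamma,\mfl),(\gamma',\mfl')\in\mcR_\subL$, so that $\mfl,\mfl'\in\subL$ and $\gamma,\gamma'\in\populated$, i.e.\ $[\gamma]=[\gamma']=1$, and expand $\rr^{(\gamma,\mfl)}\trineg\rr^{(\gamma',\mfl')}$ via \eqref{neg07}: it is a linear combination of elements $\rr^{(\beta',\mfl')}$, where $\beta'$ ranges over the multi-indices appearing in $\Big(\sum_{k\in\N_0}\big(\tfrac1{k!}D^k\z^\gamma\big)\partial_{(\mfl,k)}\Big)\z^{\gamma'}$. For each such $\beta'$, \eqref{tra11} gives $[\beta']=[\gamma]-1+[\gamma']=1$, hence $\beta'\in\populated$, together with
\[
	\sum_{k\in\N_0}\beta'(\mfm,k)=\sum_{k\in\N_0}\gamma'(\mfm,k)+\sum_{k\in\N_0}\gamma(\mfm,k)-\delta_{\mfm}^{\mfl}\qquad\text{for all }\mfm\in\mfL.
\]
Summing this identity over $\mfm\in\mfL\setminus\subL$ and using the key point that $\mfl\in\subL$ — so that every correction term $\delta_\mfm^{\mfl}$ vanishes — I obtain $\lnh\beta'\rnh_\subL=\lnh\gamma\rnh_\subL+\lnh\gamma'\rnh_\subL$. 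Since $\lnh\gamma\rnh_\subL,\lnh\gamma'\rnh_\subL\geq 1$ by the definition of $\mcR_\subL$, this is $\geq 2>0$, and as $\mfl'\in\subL$ it follows that $(\beta',\mfl')\in\mcR_\subL$; this proves (i). Setting $(R_\subL)_\kappa:=\lspan\{\rr^{(\gamma,\mfl)}\mid(\gamma,\mfl)\in\mcR_\subL,\ \lnh\gamma\rnh_\subL=\kappa\}$, the same identity is precisely $(R_\subL)_\kappa\trineg(R_\subL)_{\kappa'}\subset(R_\subL)_{\kappa+\kappa'}$, which is (ii). (Alternatively one can observe that on $R_\subL$ this grading equals the finite sum over $\mfm\in\mfL\setminus\subL$ of the gradings \eqref{grad01} with parameter $\mfm$, the corrections $-\delta_\mfm^{\mfl'}$ all vanishing since $\mfl'\in\subL$, and a finite sum of gradings is a grading.)

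For (iii), note that $\lnh\gamma\rnh_\subL=\sum_{(\mfm,k)\in(\mfL\setminus\subL)\times\N_0}\gamma(\mfm,k)$ is a non-negative integer which is strictly positive — hence $\geq 1$ — whenever $(\gamma,\mfl)\in\mcR_\subL$, so all degrees lie in $\{1,2,\dots\}$. The value $1$ is attained: picking any $\mfm_0\in\mfL\setminus\subL$ and any $\mfm_\ast\in\subL$, the pair $(e_{(\mfm_0,0)},\mfm_\ast)$ lies in $\mcR_\subL$, since $[e_{(\mfm_0,0)}]=1$ by \eqref{wei01} and $\lnh e_{(\mfm_0,0)}\rnh_\subL=1$ because $\mfm_0\notin\subL$, and it has degree $1$. (If $\mfL\setminus\subL=\emptyset$ then $\mcR_\subL=\emptyset$ and $R_\subL=\{0\}$, so the statement is trivial; otherwise $\mfL\setminus\subL$ and $\subL$ are both nonempty and the above witnesses exist.) Hence $\min_{\kappa\in\mathsf{K}}\kappa=1$ and $(R_\subL,\trineg)$ is $1$-connected.

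I do not anticipate a genuine obstacle here: the argument is pure bookkeeping once \eqref{tra11} is in hand. The one point worth flagging is conceptual rather than technical — the two constraints built into $\mcR_\subL$, namely that the noise label lie in $\subL$ and that the multi-indices be populated ($[\cdot]=1$), are exactly what forces $\lnh\cdot\rnh_\subL$ to be additive under $\trineg$, which is precisely why the manifestly non-positive gradings \eqref{grad01}–\eqref{grad02} on all of $R$ restrict to a strictly positive grading on $R_\subL$. The only place to be careful in practice is matching the index conventions of \eqref{neg07} and \eqref{tra11} when reading off the relation satisfied by the output $\beta'$.
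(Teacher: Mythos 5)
Your proof is correct and follows essentially the same route as the paper: both rest on \eqref{tra11}, summed over the labels in $\mfL\setminus\subL$ so that the correction $-\delta_\cdot^{\mfl}$ drops out (as $\mfl\in\subL$), giving additivity of $\lnh\cdot\rnh_\subL$ under $\trineg$, combined with $\lnh\gamma\rnh_\subL\geq 1$ on $\mcR_\subL$. You are merely more explicit than the paper about closure of $R_\subL$ under $\trineg$, the attainment of degree $1$, and the degenerate case $\subL=\mfL$.
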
	
\begin{proof}
As a consequence of \eqref{tra11}, if $(\gamma,\mfl)\in \mcR_\subL$
\begin{align}\label{noi01}
		\Big(\sum_{k\in\N_0} \big(\tfrac{1}{k!}D^{k} \z^{\gamma}\big) \partial_{(\mfl,k)}\Big)_{\beta'}^{\gamma'} \neq 0
	 \implies\,
	\lnh \beta' \rnh_\subL = \lnh \gamma' \rnh_\subL + \lnh \gamma \rnh_\subL.
\end{align}
Since for every $(\gamma,\mfl)\in \mcR_\subL$ it holds $\lnh \gamma \rnh_\subL \geq 1$, the claim follows.
\end{proof}
We apply the Guin-Oudom procedure (Subsection \ref{subsec::2.1}) and transposition (Subsection \ref{subsec::2.2}), producing the group $\mcG(R_\subL,\trineg)$ of multiplicative functionals over $\R[\renlie_\subL]$ (Subsection \ref{subsec::2.3}). Given an element $\mathbf{c}\in R_\subL^*$, we have $\rho_\trineg (\exp_\trineg (\mathbf{c}))$ $\in$ $\textnormal{End}(R_\subL^*)$, cf. Lemma \ref{lem:2.9}. We also have the corresponding extension of $\rhotil_\trineg$.
\begin{lemma}\label{cor:tra02}
	For every $\mathbf{c}\in R_\subL^*$, $\rhotil_\trineg(\exp_{\trineg}(\mathbf{c}))\in \textnormal{End}(T^*)$ and it satisfies \eqref{tra21} and \eqref{tra22}. In particular,
	\begin{align}
		\rhotil_\trineg(\exp_{\trineg}(\mathbf{c})) \z_{(\mfl,0)} &= \z_{(\mfl,0)} + c_\mfl \quad \quad\mbox{for all }\mfl\in\subL;\label{mc01b}\\
		\rhotil_\trineg(\exp_{\trineg}(\mathbf{c})) \z_{(\mfl,0)} &= \z_{(\mfl,0)} \quad \quad \quad \quad\mbox{for all }\mfl\in \mfL\setminus\subL.\label{mc01c}
	\end{align}
\end{lemma}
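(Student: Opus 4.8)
The plan is to follow the proof of Lemma \ref{cor:tra01} almost verbatim, simply replacing the grading $\length{\cdot}-[\cdot]$ used there by $\lnh\cdot\rnh_\subL$. First I would recall, from the discussion preceding Lemma \ref{cor:tra01}, that $\rhotil_\trineg(\rr^{(\gamma',\mfl')})\in\textnormal{End}(T^*)$ for every $(\gamma',\mfl')\in\mcR_\subL$ by Lemma \ref{lem:ren01}; this is exactly the length-one finiteness condition \eqref{fin05} of Remark \ref{rem:fin10} applied with $V=T$ and $\psi=\rhotil_\trineg$. Then I would grade $T$, hence also $T^*$, by $\beta\mapsto\lnh\beta\rnh_\subL$. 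Since $\lnh\beta\rnh_\subL=\sum_{(\mfl,k)\in(\mfL\setminus\subL)\times\N_0}\beta(\mfl,k)\geq 0$, this grading is bounded from below and the monomials $\z^\beta$ are homogeneous, and by \eqref{noi01} it is compatible with the grading $\grad{\gamma,\mfl}=\lnh\gamma\rnh_\subL$ on $(R_\subL,\trineg)$ in the sense of Remark \ref{rem:fin10}. Since $(R_\subL,\trineg)$ is $1$-connected, \eqref{coe01} then supplies the length bound \eqref{fin06}, so Lemma \ref{lem:2.13} applies and gives $\rhotil_\trineg(\exp_\trineg(\mathbf{c}))\in\textnormal{End}(T^*)$.

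Once this endomorphism is available, I would derive \eqref{tra21} and \eqref{tra22} by expanding $\exp_\trineg(\mathbf{c})=\sum_{I\in M(\mcR_\subL)}\mathbf{c}^I\mathsf{R}_I$ in the Guin-Oudom basis of $\rmU(R_\subL,\trineg)$, applying the finite-order identities \eqref{pre02bis} and \eqref{pre02} termwise, and passing to the series --- legitimate because, for each fixed target $\beta$, the grading forces only finitely many $I$ to contribute. As $\exp_\trineg(\mathbf{c})$ is grouplike (cf. \eqref{grouplike}) the Sweedler sum in \eqref{pre02} collapses, yielding exactly \eqref{tra22}; and as it is a character, $\epsilon(\exp_\trineg(\mathbf{c}))=1$, so \eqref{pre02bis} becomes $\rhotil_\trineg(\exp_\trineg(\mathbf{c}))\z_{(\mfl,0)}=\z_{(\mfl,0)}+\proj_\mfl(\exp_\trineg(\mathbf{c}))$, which is \eqref{tra21}.

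For the two displayed special cases I would evaluate $\proj_\mfl(\exp_\trineg(\mathbf{c}))$: the rank-one part of $\exp_\trineg(\mathbf{c})$ with respect to the Guin-Oudom basis is $\mathbf{c}=\sum_{(\gamma,\mfl')\in\mcR_\subL}\mathbf{c}_{(\gamma,\mfl')}\rr^{(\gamma,\mfl')}$, so $\proj_\mfl(\exp_\trineg(\mathbf{c}))=\textnormal{proj}_\mfl(\mathbf{c})$; for $\mfl\in\subL$ this equals $\sum_{\gamma\in\populated}\mathbf{c}_{(\gamma,\mfl)}\z^\gamma=c_\mfl$ by \eqref{tra32}, giving \eqref{mc01b}, while for $\mfl\in\mfL\setminus\subL$ no pair $(\gamma,\mfl)$ lies in $\mcR_\subL$, so $\textnormal{proj}_\mfl(\mathbf{c})=0$ and \eqref{mc01c} follows. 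I do not expect any genuine obstacle here: the one thing to verify with care is that $\lnh\cdot\rnh_\subL$ really is a grading on $T$ that is bounded from below and compatible with that on $R_\subL$, and this is immediate from \eqref{noi01} and the explicit formula for $\lnh\cdot\rnh_\subL$; the remainder is a transcription of arguments already made for Lemma \ref{cor:tra01}, and the real content of the statement is the packaging of the two explicit normalizations \eqref{mc01b} and \eqref{mc01c}.
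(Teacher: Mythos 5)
Your proposal is correct and follows essentially the same route as the paper: the paper's proof simply observes via \eqref{noi01} that $\lnh\cdot\rnh_\subL$ gives a compatible, nonnegative grading, placing us in the situation of Remark \ref{rem:fin10} so that Lemma \ref{lem:2.13} applies. Your additional unpacking of \eqref{tra21}, \eqref{tra22} via the grouplike/character properties and of \eqref{mc01b}--\eqref{mc01c} via $\proj_\mfl$ just makes explicit what the paper leaves implicit from the discussion at the start of Subsection \ref{subsec::tran02}.
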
	
\begin{proof}
	We know from \eqref{noi01} that
	\begin{align*}
		\big(\rhotil_\trineg(\rr^{(\gamma',\mfl')})\big)_\beta^\gamma \neq 0 \implies \lnh \beta\rnh_\subL = \lnh \gamma' \rnh_\subL + \lnh \gamma \rnh_\subL.
	\end{align*} 
	Since $\lnh\cdot\rnh_\subL\geq 0$ for all $\emptyset\neq \subL \subset \mfL$, we are in the situation of Remark \ref{rem:fin10}, thus the claim follows from Lemma \ref{lem:2.13}.
\end{proof}	
\begin{remark}\label{rem:4.14}
In the case of $\hat{R}_\subL$, since for $(\gamma,\mfl)\in \hat{\renlie}_\subL$ $\gamma$ does not depend on $\subL$, and $\mfl\in \subL$, we have
\begin{align*}
	\rr^{(\gamma,\mfl)}\trineg \rr^{(\gamma',\mfl')} = 0.
\end{align*}
Therefore, $(\hat{R}_\subL, \trineg)$ is trivial and Remark \ref{rem:trivial} yields the additive group structure
\begin{align*}
	\exp_{\trineg}(\mathbf{c})*\exp_{\trineg}(\mathbf{c}') = \exp_{\trineg}(\mathbf{c} + \mathbf{c}').
\end{align*}
A natural choice is $\subL = \{0\}$, which represents the case of algebraic renormalization in regularity structures: Only the drift term is affected ($\mfl = 0$), and counterterms depend on the rough drivers ($\mfl\neq 0$). In terms of \cite{BL23}, this corresponds to the combination of \cite[Assumption 2]{BL23} and \cite[(3.37)]{BL23}. For later purpose, let us denote
\begin{align*}
	\renlie_\circ := \renlie_{\{0\}},\,\,\,\hat{\renlie}_\circ := \hat{\renlie}_{\{0\}},
\end{align*}	
and similarly for $R_\circ$ and $\hat{R}_\circ$. Note that
\begin{align*}
	(\hat{R}_\circ,\trineg) \subset (R_\circ,\trineg) \subset (T, \trineg_0)
\end{align*}
are pre-Lie subalgebras.
\end{remark}
\subsection{Translated rough paths}\label{subsec::4.3}
In line with \eqref{TN}, we introduce the following notation: For every $N\in\N$, 
\begin{align*}
	R^{(N)} := \lspan_{(\gamma,\mfl)\in \renlie} \{ \rr^{(\gamma,\mfl)}\,|\, \length{\gamma} \leq N\},
\end{align*}
and the analogue for $\Rentwo^{(N)}$, $R_\subL^{(N)}$ and $\hat{R}_\subL^{(N)}$. Consider $\mathbf{c}\in \Rentwo^{(N)}$ and construct $\rhotil_\trineg (\exp_\trineg(\mathbf{c}))\in \textnormal{End}(T^*)$; note that the restriction to finite sums gives $\rhotil_\trineg (\exp_{\trineg}(\mathbf{c}))\in \textnormal{End}(T)$, and therefore by \eqref{tra22} $\rhotil_\trineg (\exp_{\trineg}(\mathbf{c}))$ is a pre-Lie morphism of $(T, D)$. By the universality property, it extends uniquely to an algebra morphism
\begin{align*}
	M_{\mathbf{c}} : \rmU(T, D) \longrightarrow \rmU(T, D).
\end{align*}
The same can be said for $\mathbf{c}\in R_\subL^{(N)}$.
\begin{lemma}
	For every $\mathbf{c}\in \Rentwo^{(N)}$ and every $U\in\rmU(T,D)$, it holds, as endomorphisms of $T$,
	\begin{align*}
		\rhotil_\trineg (\exp_\trineg(\mathbf{c})) \rho_\tripos (U) = \rho_\tripos(M_\mathbf{c} U) \rhotil_\trineg (\exp_\trineg(\mathbf{c})).
	\end{align*}
	The same holds for $\mathbf{c}\in R_\subL^{(N)}$  for some $\emptyset\neq \subL\subset \mfL$.
\end{lemma}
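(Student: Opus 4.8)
The plan is to prove the intertwining identity by reducing it to the action of the generators of $\rmU(T,D)$ and then invoking the generalized Leibniz rule \eqref{tra22}. First I would record the two structural facts that make the reduction work: $\rho_\tripos:\rmU(T,D)\to\textnormal{End}(T)$ is an algebra morphism by construction, and $M_{\mathbf c}$ is an algebra endomorphism of $\rmU(T,D)$, so $\rho_\tripos\circ M_{\mathbf c}$ is again an algebra morphism into $\textnormal{End}(T)$. Writing $\Phi:=\rhotil_\trineg(\exp_\trineg(\mathbf c))$, which lies in $\textnormal{End}(T)$ (and not merely in $\textnormal{End}(T^*)$) because $\mathbf c\in\Rentwo^{(N)}$ is a finite sum, the desired equality $\Phi\,\rho_\tripos(U)=\rho_\tripos(M_{\mathbf c}U)\,\Phi$ is trivially true for $U=1$ and propagates through products: if it holds for $U_1$ and $U_2$ then $\Phi\,\rho_\tripos(U_1U_2)=\Phi\,\rho_\tripos(U_1)\rho_\tripos(U_2)=\rho_\tripos(M_{\mathbf c}U_1)\,\Phi\,\rho_\tripos(U_2)=\rho_\tripos(M_{\mathbf c}U_1)\rho_\tripos(M_{\mathbf c}U_2)\,\Phi=\rho_\tripos(M_{\mathbf c}(U_1U_2))\,\Phi$. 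Since $T$ generates $\rmU(T,D)$ as an algebra, this leaves only the case $U=\z^\gamma$ with $\gamma\in\populated$.

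For that case I would unwind the definitions: $\rho_\tripos(\z^\gamma)$ is the endomorphism $\pi\mapsto\z^\gamma D\pi$ of $T$, so for arbitrary $\pi\in T$ one computes $\Phi(\z^\gamma D\pi)=(\Phi\z^\gamma)\,D\,(\Phi\pi)$, which is precisely \eqref{tra22} with the two arguments instantiated to $\z^\gamma$ and $\pi$. It then remains to observe that $\Phi\z^\gamma=M_{\mathbf c}\z^\gamma$ — indeed $M_{\mathbf c}$ was defined as the algebra-morphism extension of the pre-Lie morphism $\Phi$ of $(T,D)$, so it coincides with $\Phi$ on the degree-one part $T\subset\rmU(T,D)$ — and that $(\Phi\z^\gamma)\,D\,(\cdot)$ is, by the very definition of $\rho_\tripos$, the endomorphism $\rho_\tripos(M_{\mathbf c}\z^\gamma)$. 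Putting these together gives $\Phi\,\rho_\tripos(\z^\gamma)=\rho_\tripos(M_{\mathbf c}\z^\gamma)\,\Phi$, completing the induction; combined with the previous paragraph this proves the lemma for $\mathbf c\in\Rentwo^{(N)}$.

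Finally I would note that the case $\mathbf c\in R_\subL^{(N)}$, $\emptyset\neq\subL\subset\mfL$, requires no new work: the argument above used only that $\Phi\in\textnormal{End}(T)$ and that $\Phi$ satisfies the generalized Leibniz rule \eqref{tra22}, and both of these are supplied, in that setting, by Lemma \ref{cor:tra02} applied to the finite sum $\mathbf c$. I do not anticipate a genuine obstacle here; the only points demanding a little care are bookkeeping ones — checking that $\Phi$, $M_{\mathbf c}$ and $\rho_\tripos(U)$ are all honest endomorphisms of $T$ (which is where the truncation to $\Rentwo^{(N)}$, resp. $R_\subL^{(N)}$, is used) and correctly identifying $\rho_\tripos(\z^\gamma)$ with multiplication-by-$\z^\gamma$-after-$D$ so that \eqref{tra22} applies verbatim. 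All the substantive content is already packaged in \eqref{tra22}, which is the dual (pre-Lie) form of the cointeraction property.
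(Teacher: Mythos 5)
Your proposal is correct and follows essentially the same route as the paper: reduce to primitive elements $U\in T$ (where the identity is exactly the pre-Lie morphism property \eqref{tra22} combined with the definition \eqref{go01} of $\rho_\tripos$ and the fact that $M_{\mathbf c}$ restricts to $\rhotil_\trineg(\exp_\trineg(\mathbf c))$ on $T$), and then propagate through products using the algebra morphism properties of $\rho_\tripos$ and $M_{\mathbf c}$. The paper phrases the propagation as an induction on $bU$ with $b\in T$ rather than on general products $U_1U_2$, but the argument is the same.
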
	
\begin{proof}
	We show it inductively in $U\in \rmU(T, D)$. If $U$ is primitive, i.~e. $U\in T$, then it is a straightforward consequence of the pre-Lie morphism property \eqref{tra22} and the definition \eqref{go01} of $\rho_\tripos$. For the induction step, we assume it true for $U\in \rmU(T, D)$, take $b\in T$ and show it for $bU$: By the algebra morphism property of $\rho_\tripos$ and $M_\mathbf{c}$,
	\begin{align*}
		\rhotil_\trineg (\exp_{\trineg}(\mathbf{c})) \rho_\tripos (bU) &= \rhotil_\trineg (\exp_{\trineg}(\mathbf{c}))\rho_\tripos(b) \rho_\tripos(U) \\
		& = \rho_\tripos (\rhotil_\trineg (\exp_{\trineg}(\mathbf{c})) b) \rhotil_\trineg (\exp_{\trineg}(\mathbf{c}))\rho_\tripos (U)\\
		& = \rho_\tripos (\rhotil_\trineg (\exp_{\trineg}(\mathbf{c})) b) \rho_\tripos (M_\mathbf{c} U) \rhotil_\trineg (\exp_{\trineg}(\mathbf{c}))\\
		& = \rho_\tripos \big(\rhotil_\trineg (\exp_{\trineg}(\mathbf{c})) b(M_\mathbf{c} U)\big)  \rhotil_\trineg (\exp_{\trineg}(\mathbf{c})) \\
		& = \rho_\tripos(M_\mathbf{c}(bU)) \rhotil_\trineg (\exp_{\trineg}(\mathbf{c})).
		\end{align*}
\end{proof}	
Thanks to the finiteness properties, via Lemmas \ref{lem:2.9} and \ref{lem:2.13}, we have the following extension:
\begin{corollary}
	For all $\bff\in T^*$, and $\mathbf{c}$ $\in$ $\Rentwo^{(N)}$, it holds as endomorphisms of $T^*$
	\begin{align}\label{ren50}
		\rhotil_\trineg (\exp_{\trineg}(\mathbf{c})) \rho_\tripos (\exp_{D}(\bff)) = \rho_\tripos (\exp_{D}(\rhotil_\trineg (\exp_{\trineg}(\mathbf{c})) \bff)) \rhotil_\trineg (\exp_{\trineg}(\mathbf{c})).
	\end{align}
	The same holds if $\mathbf{c}\in R_\subL^{(N)}$ for some $\emptyset\neq \subL\subset \mfL$.
\end{corollary}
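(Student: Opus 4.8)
The plan is to obtain \eqref{ren50} as the ``exponentiated'' version of the previous lemma, using exactly the same resummation argument that produced \eqref{com01} and Lemma \ref{lem:2.9}/\ref{lem:2.13} from their finite counterparts. Concretely, both sides of \eqref{ren50} are formal series in the Guin-Oudom bases of $\rmU(R_{\geq 2},\trineg)$ (resp. $\rmU(R_\subL,\trineg)$) and $\rmU(T,D)$, and the content of the corollary is that these series converge when evaluated on any $\z^\beta$, $\beta\in\populated$, and that the identity then holds coefficientwise.

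First I would expand $\exp_D(\bff) = \sum_{I\in M(\populated)} \bff^I \msfZ_I$ and $\exp_\trineg(\mathbf{c}) = \sum_{J} \mathbf{c}^J \mathsf{R}_J$ according to \eqref{exp01}, so that the left-hand side of \eqref{ren50} reads $\sum_{J,I} \mathbf{c}^J \bff^I\, \rhotil_\trineg(\mathsf{R}_J)\rho_\tripos(\msfZ_I)$. Applying the previous lemma termwise with $U = \msfZ_I$ gives
\begin{align*}
	\rhotil_\trineg(\mathsf{R}_J)\rho_\tripos(\msfZ_I) = \sum_{(\mathsf{R}_J)} \rho_\tripos\big(M_{(1)}\msfZ_I\big)\,\rhotil_\trineg(\mathsf{R}_{J,(2)}),
\end{align*}
but it is cleaner to keep $M_\mathbf{c}$ packaged: by the lemma, $\rhotil_\trineg(\exp_\trineg(\mathbf c))\rho_\tripos(\msfZ_I) = \rho_\tripos(M_\mathbf{c}\msfZ_I)\rhotil_\trineg(\exp_\trineg(\mathbf c))$ as an identity of formal series, where $M_\mathbf c$ is the algebra-morphism extension of the pre-Lie morphism $\rhotil_\trineg(\exp_\trineg(\mathbf c))$. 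Then I would invoke the algebra-morphism property of $\rho_\tripos$ to write $\rho_\tripos(M_\mathbf c\exp_D(\bff)) = \rho_\tripos(\exp_D(M_\mathbf c \bff|_T))$ — more precisely, since $M_\mathbf c$ is an algebra morphism of $\rmU(T,D)$ sending $T$ to $T$ and $\exp_D(\bff)$ is the grouplike element built from $\bff\in T^*$, $M_\mathbf c\exp_D(\bff) = \exp_D(\rhotil_\trineg(\exp_\trineg(\mathbf c))\bff)$; this is the observation that a Hopf-algebra morphism maps grouplike to grouplike, applied to the dual extension. That yields exactly the right-hand side of \eqref{ren50}.

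The remaining point — and the only place where care is needed — is justifying that all of these formal manipulations are legitimate as endomorphisms of $T^*$, i.e. that every series in sight converges pointwise on monomials. This is where I would cite the finiteness machinery already developed: $\rho_\tripos(\exp_D(\bff))\in\textnormal{End}(T^*)$ by Lemma \ref{lem:2.9} (via \eqref{der02}, which gives \eqref{fin03} for $(T,D)$); $\rhotil_\trineg(\exp_\trineg(\mathbf c))\in\textnormal{End}(T^*)$ by Lemmas \ref{cor:tra01} and \ref{cor:tra02}; and $\rhotil_\trineg(\exp_\trineg(\mathbf c))\bff\in T^*$ again by the same lemmas, so that $\exp_D$ of it is well-defined. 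The composite $\rhotil_\trineg(\exp_\trineg(\mathbf c))\rho_\tripos(\exp_D(\bff))$ applied to a fixed $\z^\beta$ first lands, after $\rho_\tripos(\exp_D(\bff))$, in a (possibly infinite) sum of $\z^\gamma$ with $\length\gamma$ bounded — by the grading relation \eqref{tru08} one has $\length\gamma\le\length\beta$ in each nonzero term of $\rho_D(\msfZ_I)\z^\beta$ — but actually the relevant bookkeeping is that $\rho_D(\exp_D(\bff))$ preserves the filtration by $\length\cdot$ and then $\rhotil_\trineg(\exp_\trineg(\mathbf c))$, by \eqref{tra11} (resp. \eqref{noi01}), only increases $\length\cdot$ by controlled amounts with finite multiplicity; so the coefficient of each $\z^\delta$ on both sides is a finite sum. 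I expect the main (mild) obstacle to be precisely this interchange-of-summation verification: one must check that grouping the double series by the output multi-index $\delta$ gives, on each side, a finite sum of products of structure constants, so that the termwise identity from the previous lemma upgrades to the stated operator identity. Once convergence is secured, the equality is immediate from the lemma and the algebra-morphism property of $\rho_\tripos$, with no new computation. For the $R_\subL^{(N)}$ case the argument is verbatim, replacing \eqref{tra11} by \eqref{noi01} and $\length\cdot - [\cdot]$ by $\lnh\cdot\rnh_\subL$ as the relevant nonnegative grading.
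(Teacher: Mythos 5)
Your proposal is correct and follows essentially the same route as the paper, which proves the corollary simply by invoking the preceding lemma together with the finiteness/grading machinery of Lemmas \ref{lem:2.9} and \ref{lem:2.13} to extend the identity from $\rmU(T,D)$ to the exponentials. The extra details you supply (grouplikes map to grouplikes under $M_{\mathbf c}$, and the coefficientwise convergence check) are exactly the content the paper leaves implicit; only your passing remark about $\rho_D(\msfZ_I)$ preserving the filtration has the input/output indices of \eqref{tru08} slightly reversed, which does not affect the argument.
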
	
As a straightforward consequence of \eqref{tra21}, \eqref{mc01b}, \eqref{mc01c} and \eqref{ren50}, we observe that the transformed solution of the hierarchy \eqref{rou01}, seen as \eqref{rou02}, satisfies a transformed hierarchy of equations.
\begin{corollary}
	Let $N\in \N$ and let $\mbfX:[0,1]^2 \to T^*$ satisfy \eqref{rou02}. For $\mathbf{c}\in R^{(N)}$, we use the shorthand notation $T_\mathbf{c} := \rhotil_\trineg(\exp_\trineg (\mathbf{c}))$.
	\begin{enumerate}[label=(\roman*)]
		\item Let $\mathbf{c}\in \Rentwo^{(N)}$. Then $T_\mathbf{c}\mbfX$ satisfies
		\begin{align}\label{ren06}
			T_\mathbf{c}\mbfX_{st} = \sum_{\mfl\in\mfL} \int_s^t \rho_\tripos (\exp_{D}(T_\mathbf{c}\mbfX_{su}))(\z_{(\mfl,0)} + c_\mfl )d X_u^\mfl. 
		\end{align}
		\item Let $\mathbf{c}\in R_\subL^{(N)}$ for some $\emptyset\neq \subL \subset \mfL$. Then $T_\mathbf{c}\mbfX$ satisfies 
		\begin{align*}
			T_\mathbf{c}\mbfX_{st} &= \sum_{\mfl\in\mfL} \int_s^t \rho_\tripos (\exp_{D}(T_\mathbf{c}\mbfX_{su}))\z_{(\mfl,0)}d X_u^\mfl \\
			&\quad + \sum_{\mfl\in \subL} \int_s^t \rho_\tripos (\exp_{D}(T_\mathbf{c}\mbfX_{su}))c_\mfl dX_u^{\mfl}. 
		\end{align*}
		In particular, if $\mathbf{c}\in R_\circ^{(N)}$, identifying $c = \sum_{\gamma\in \populated}\mathbf{c}_{(\gamma,0)}\z^\gamma$,
		\begin{align}
			T_\mathbf{c}\mbfX_{st} &= \sum_{\mfl\in\mfL} \int_s^t \rho_\tripos (\exp_{D}(T_\mathbf{c}\mbfX_{su}))\z_{(\mfl,0)}d X_u^\mfl\nonumber \\
			&\quad+ \int_s^t \rho_\tripos (\exp_{D}(T_\mathbf{c}\mbfX_{su}))c \,\, du. \label{ren20}
		\end{align}
	\end{enumerate}
\end{corollary}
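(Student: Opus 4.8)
The plan is to apply the endomorphism $T_\mathbf{c}$ to the fixed-point identity \eqref{rou02} and then commute it past the integral and past $\rho_\tripos$, reducing everything to the action of $T_\mathbf{c}$ on the single generators $\z_{(\mfl,0)}$.

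First I would record that $T_\mathbf{c}\in\textnormal{End}(T^*)$ is well-defined by Lemma \ref{cor:tra01} (resp.\ Lemma \ref{cor:tra02}), so that $T_\mathbf{c}\mbfX_{su}\in T^*$ for all $s,u$, and that the integral in \eqref{rou02} is to be read componentwise via the hierarchy \eqref{rou01}, each $\beta$-coordinate being an ordinary scalar integral. The key structural input is the finiteness property behind Lemma \ref{lem:ren01} (the one used in Remark \ref{rem:fin10} to place $T_\mathbf{c}$ within the scope of Lemma \ref{lem:2.13}): for each fixed $\beta$, the $\beta$-coordinate of $T_\mathbf{c}\pi$ is a finite linear combination, with $\pi$-independent coefficients, of coordinates of $\pi$. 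Hence $T_\mathbf{c}$ is continuous for the product topology on $T^*$ and commutes with the componentwise limit of Riemann sums defining the integral, so that applying $T_\mathbf{c}$ to \eqref{rou02} yields
\begin{align*}
	T_\mathbf{c}\mbfX_{st} = \sum_{\mfl\in\mfL} \int_s^t T_\mathbf{c}\,\rho_\tripos\big(\exp_{D}(\mbfX_{su})\big)\z_{(\mfl,0)}\, dX_u^\mfl.
\end{align*}

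Next I would invoke \eqref{ren50} with $\bff = \mbfX_{su}$ to rewrite $T_\mathbf{c}\,\rho_\tripos(\exp_D(\mbfX_{su}))$ as $\rho_\tripos(\exp_D(T_\mathbf{c}\mbfX_{su}))\,T_\mathbf{c}$, so that the integrand becomes $\rho_\tripos(\exp_D(T_\mathbf{c}\mbfX_{su}))\,(T_\mathbf{c}\z_{(\mfl,0)})$. It then remains to evaluate $T_\mathbf{c}\z_{(\mfl,0)}$. For part (i), with $\mathbf{c}\in\Rentwo^{(N)}$, equation \eqref{tra21} gives $T_\mathbf{c}\z_{(\mfl,0)} = \z_{(\mfl,0)} + c_\mfl$ for every $\mfl\in\mfL$, which is exactly \eqref{ren06}. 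For part (ii), with $\mathbf{c}\in R_\subL^{(N)}$, equations \eqref{mc01b} and \eqref{mc01c} give $T_\mathbf{c}\z_{(\mfl,0)} = \z_{(\mfl,0)} + c_\mfl$ for $\mfl\in\subL$ and $T_\mathbf{c}\z_{(\mfl,0)} = \z_{(\mfl,0)}$ otherwise; splitting $\sum_{\mfl\in\mfL}$ accordingly produces the stated identity. The special case $\mathbf{c}\in R_\circ^{(N)}$ follows by taking $\subL = \{0\}$, setting $c := c_0 = \sum_{\gamma\in\populated}\mathbf{c}_{(\gamma,0)}\z^\gamma$, and using the drift convention $X^0_u = u$ so that the single $\mfl=0$ term of the second sum becomes the $du$-integral in \eqref{ren20}.

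The only step that is not a pure substitution is the interchange of $T_\mathbf{c}$ with the integral, so that is where I expect the (modest) work to lie. It is nevertheless routine: once the coordinate-wise, finite-support description of $T_\mathbf{c}$ from Lemma \ref{lem:ren01} and Remark \ref{rem:fin10} is in hand, the claim on each coordinate $\beta$ reduces to commuting a fixed finite linear combination with a scalar integral, which is immediate. No new analytic estimate beyond those already used to make sense of \eqref{rou02} and of $\rho_\tripos(\exp_D(\,\cdot\,))$ in Lemma \ref{lem:3.7} is required.
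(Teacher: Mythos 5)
Your argument is exactly the one the paper intends: the corollary is stated there as "a straightforward consequence of \eqref{tra21}, \eqref{mc01b}, \eqref{mc01c} and \eqref{ren50}," and those are precisely the ingredients you combine, with the interchange of $T_\mathbf{c}$ and the componentwise integral being the only detail the paper leaves implicit and which you justify correctly via the finiteness properties. The proposal is correct and follows the paper's route.
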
	
Note that the hierarchy of equations \eqref{ren06} is associated with the transformed RDE
\begin{align*}
	dY_t = \sum_{\mfl\in\mfL} (a_\mfl (Y_t) + c_\mfl[\mathbf{a}, Y_t]) dX_t^\mfl.
\end{align*}

\medskip

We finally show that this transformation, which is meaningful in the smooth case via the hierarchy \eqref{rou02}, is also a transformation in the space of rough paths.
\begin{theorem}\label{th:trp}
	Let $\mbfX: [0,1]^2\to T^*$ be a $\alpha$-rough path.
	\begin{enumerate}[label=(\roman*)]
		\item Let $\mathbf{c}\in \Rentwo^{(N)}$. Then $T_{\mathbf{c}}\mbfX $ is a $\alpha/N$-rough path.
		\item Let $\mathbf{c}\in R_\subL^{(N)}$. Then $T_{\mathbf{c}}\mbfX$ is a $\alpha/N$-rough path.  
		\item Let $\mathbf{c}\in R_\subL^{(N)}$. Assume in addition that for every $\mfl\in\subL$ there exists $\alpha_\mfl\geq \alpha$ such that for every $\beta\in\populated$
		\begin{align*}
			\sup_{0\leq s<t \leq 1} \frac{|\mbfX_{st \beta}|}{|t-s|^{|\beta|_\subL}} < \infty,
		\end{align*}
		where $|\beta|_\subL = \sum_{\mfl\in\subL}\alpha_\mfl\sum_{k\in\N_0} \beta(\mfl,k) + \alpha \sum_{\mfl\in\mfL\setminus \subL} \sum_{k\in\N_0} \beta(\mfl,k)$. Denote $\alphamin := \min_{\mfl\in\subL} \alpha_\mfl$. Then $T_{\mathbf{c}}\mbfX$ is a $(\alpha\wedge \alphamin/N)$-rough path.
	\end{enumerate}
\end{theorem}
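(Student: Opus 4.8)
The plan is to verify the two defining properties of an $\alpha/N$-rough path (respectively $(\alpha\wedge\alphamin/N)$-rough path) from Definition \ref{def:rp} for the transformed object $T_{\mathbf c}\mbfX$: the algebraic identity \eqref{tru01} (equivalently \eqref{rou03}) and the analytic bound \eqref{tru06}. The algebraic part is essentially free: since $\mathbf c\in\Rentwo^{(N)}$ (or $R_\subL^{(N)}$) the map $T_{\mathbf c}=\rhotil_\trineg(\exp_\trineg(\mathbf c))$ lies in $\textnormal{End}(T^*)$ by Lemma \ref{cor:tra01} (resp. Lemma \ref{cor:tra02}) and is a pre-Lie morphism of $(T,D)$ by \eqref{tra22}; hence it extends to the Hopf-algebra morphism $M_{\mathbf c}$ of $\rmU(T,D)$, and the intertwining relation \eqref{ren50} together with $T_{\mathbf c}$ being a coalgebra/character-preserving map shows that applying $T_{\mathbf c}$ to a solution of \eqref{rou03} again gives a solution of \eqref{rou03}. (One must note here that $T_{\mathbf c}$ preserves $T^*$ as a subspace and commutes with $\exp_D$ in the sense of \eqref{ren50}; the $\beta$-projection of \eqref{rou03} for $T_{\mathbf c}\mbfX$ then follows from that for $\mbfX$.) So the real content is the analytic estimate.

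For the analytic estimate the strategy is to track degrees. By \eqref{tra11} (for part (i), using $\length{\gamma}\geq 2$; for parts (ii)–(iii), using \eqref{noi01} with the grading $\lnh\cdot\rnh_\subL$) the coefficients $(\rhotil_\trineg(\rr^{(\gamma',\mfl')}))_\beta^\gamma$ are nonzero only when $\length{\gamma}\geq 1$ strictly decreases, and more precisely $\length{\beta}=\length{\gamma'}-1+\length{\gamma}\geq \length{\gamma}+1$; iterating over a Guin–Oudom basis element $\mathsf R_I$ one gets $(\rhotil_\trineg(\mathsf R_I))_\beta^\gamma\neq 0\implies \length{\beta}\geq \length{\gamma}+\grad{I}\geq \length{\gamma}+\length{I}$, together with the crucial upper bound $\length{\gamma}\leq \length{\beta}$ and $\grad{I}\leq \length{\beta}-\length{\gamma}\leq\length{\beta}-1$ (so the sum over $I$ in $T_{\mathbf c}=\sum_I\mathbf c^I\rhotil_\trineg(\mathsf R_I)$ is finite, component by component). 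Therefore $(T_{\mathbf c}\mbfX)_{st\,\beta}=\sum_{\gamma,I}\mathbf c^I(\rhotil_\trineg(\mathsf R_I))_\beta^\gamma\mbfX_{st\,\gamma}$ is a \emph{finite} linear combination of the $\mbfX_{st\,\gamma}$ with $1\leq\length\gamma\leq\length\beta$. Since $\mbfX$ is an $\alpha$-rough path, $|\mbfX_{st\,\gamma}|\lesssim|t-s|^{\alpha\length\gamma}$, and because $\alpha\length\gamma\geq\alpha\geq\alpha\length\beta/N$ whenever $1\leq\length\gamma$ and $\length\beta\leq N$ — wait, here $\length\beta$ is \emph{not} bounded by $N$ for the extended rough path; rather $\length\gamma\geq 1$ gives $\alpha\length\gamma\geq (\alpha/N)\cdot N\length\gamma/\length\gamma$... the clean bookkeeping is: we want $|t-s|^{\alpha\length\gamma}\lesssim|t-s|^{(\alpha/N)\length\beta}$ for $|t-s|\leq 1$, i.e. $\alpha\length\gamma\geq(\alpha/N)\length\beta$, i.e. $\length\beta\leq N\length\gamma$. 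Since each $\rr^{(\gamma',\mfl')}$ in $\Rentwo$ has $\length{\gamma'}\geq 2$ and at most... one needs $\length{\beta}\leq N\length\gamma$ where $N$ bounds the depth of $\mathbf c$; this follows because $\length\beta=\length\gamma+\grad I$ with each factor in $\mathsf R_I$ contributing a $\gamma'$ with $\length{\gamma'}\leq N$, so $\grad I=\sum(\length{\gamma_j'}-1)\leq (N-1)\length I$, and $\length\gamma\geq 1$ while the number of insertions $\length I$ is at most $\length\beta$; a short induction gives $\length\beta\leq N\length\gamma$. Thus $|(T_{\mathbf c}\mbfX)_{st\,\beta}|\lesssim|t-s|^{\alpha\length\gamma}\lesssim|t-s|^{(\alpha/N)\length\beta}$, which is \eqref{tru06} with $\alpha$ replaced by $\alpha/N$.

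For part (iii) the argument is identical but book-kept with the refined weight: $T_{\mathbf c}\mbfX_{st\,\beta}$ is a finite combination of $\mbfX_{st\,\gamma}$, now controlled by $|t-s|^{|\gamma|_\subL}$, and one uses \eqref{tra11} in the three-line form to relate $|\beta|_\subL$, $|\gamma|_\subL$ and $|\gamma'|_\subL$ (insertions from $\mcR_\subL$ strictly increase the $\mfL\setminus\subL$-content and the admissible $\mathbf c$ has $\length{\gamma'}\leq N$), producing a bound of the form $|t-s|^{(\alpha\wedge\alphamin/N)\length\beta}$; the worst exponent is governed by $\alpha\wedge\alphamin$ because the new $\mfL\setminus\subL$-edges carry weight $\alpha$ while the surviving $\subL$-edges carry weight $\geq\alphamin$, and the division by $N$ again absorbs the depth of $\mathbf c$. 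I expect the main obstacle to be exactly this last degree/weight bookkeeping — making precise the inequality $\length\beta\leq N\length\gamma$ (and its weighted analogue $|\beta|_\subL\leq (\text{const})\,|\gamma|_\subL$ with the constant controlled by $N$ and $\alpha/\alphamin$) via induction on $\length I$ — together with the (routine but fiddly) check that $T_{\mathbf c}$ genuinely maps $T^*$ into $T^*$ and that the finite-sum truncations in Definition \ref{def:rp} are harmless, exactly as in the Remark after Definition \ref{def:rp}. The algebraic consistency with \eqref{rou03} is, by contrast, an immediate consequence of \eqref{ren50} and I would dispatch it in one or two lines.
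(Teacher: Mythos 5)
Your overall route is the same as the paper's: dispatch the algebraic identity \eqref{rou03} via \eqref{ren50}, then reduce the analytic bound to the degree inequality $\length{\beta}\leq N\length{\gamma}$ for the nonzero matrix entries $(\rhotil_\trineg(\mathsf{R}_I))_\beta^\gamma$. However, the justification you give for that inequality does not close. From \eqref{tra11}/\eqref{grad03} one gets $\length{\beta}=\length{\gamma}+\sum_{(\gamma',\mfl')}I(\gamma',\mfl')(\length{\gamma'}-1)\leq\length{\gamma}+(N-1)\length{I}$, so everything hinges on bounding $\length{I}$ by $\length{\gamma}$ (the length of the \emph{source} multi-index), not by $\length{\beta}$ as you write: plugging $\length{I}\leq\length{\beta}$ into the display yields $\length{\beta}\leq\length{\gamma}+(N-1)\length{\beta}$, which is vacuous for $N\geq 2$. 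The missing ingredient is the explicit iterated formula \eqref{goD02} for $\rhotil_\trineg(\rr^{(\gamma_1,\mfl_1)}\pprod_\trineg\cdots\pprod_\trineg\rr^{(\gamma_l,\mfl_l)})$: all the derivatives $\partial_{(\mfl_j,k_j)}$ act directly on $\z^\gamma$ \emph{before} any of the multiplications by $\tfrac{1}{k_j!}D^{k_j}\z^{\gamma_j}$, i.e.\ the Guin--Oudom element implements \emph{simultaneous} insertion at distinct slots of $\gamma$. Hence $(\rhotil_\trineg(\mathsf{R}_I))_\beta^\gamma\neq 0$ forces $\sum_{\gamma'\in\populated}I(\gamma',\mfl)\leq\sum_{k}\gamma(\mfl,k)$ for each $\mfl$ (this is \eqref{*p33}), and in particular $\length{I}\leq\length{\gamma}$, which gives $\length{\beta}\leq N\length{\gamma}$ and hence $|t-s|^{\alpha\length{\gamma}}\leq|t-s|^{\frac{\alpha}{N}\length{\beta}}$. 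You correctly flag this bookkeeping as the main obstacle, but the specific bound you propose would not resolve it; the resolution requires going back to the structure of the symmetrized action, not just to the gradings.

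For part (iii) your sketch is directionally right but inherits the same issue: the paper isolates the $\subL$ and $\mfL\setminus\subL$ contributions in \eqref{grad03} and again uses the componentwise version of \eqref{*p33} to control the number of insertions by the $\gamma$-content, after which the case distinction $\alpha N\lessgtr\alphamin$ produces the exponent $\alpha\wedge\alphamin/N$. The algebraic half and the finiteness/truncation remarks in your first paragraph are fine and match the paper.
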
	
\begin{proof}
	In all cases, property \eqref{rou03} follows from \eqref{ren50}, so we focus on the analytic bounds. Recall \eqref{tra11}, which can be rewritten for $\beta,\gamma\in \populated$, $(\gamma',\mfl')$ $\in$ $\renlie$ and all $\mfl\in \mfL$
	\begin{align*}
		\big(\rhotil_\trineg (\rr^{(\gamma',\mfl')})\big)_\beta^\gamma \neq 0
		\implies \sum_{k\in\N_0} \beta(\mfl,k) = \sum_{k\in\N_0} \gamma(\mfl,k) + \sum_{k\in\N_0} \gamma'(\mfl,k) - \delta_\mfl^{\mfl'}.
	\end{align*}
	Then the grading \eqref{grad01}, iterated via \eqref{go06}, implies for every $\beta,\gamma\in\populated$ and $I\in M(\mcR)$
		\begin{align}
			&\big(\rhotil_\trineg(R_I)\big)_\beta^\gamma \neq 0\nonumber\\
			&\quad \implies
			\sum_{k\in\N_0} \beta(\mfl,k) = \sum_{k\in \N_0} \gamma(\mfl,k) + \hspace*{-5pt}\sum_{(\gamma',\mfl')\in \mcR} I(\gamma',\mfl') \big(\sum_{k\in\N_0}\hspace*{-5pt}\gamma'(\mfl,k) - \delta_\mfl^{\mfl'}\big). \label{grad03} 
	\end{align}	
	As a consequence, we have
	\begin{align*}
		\big(\rhotil_\trineg(R_I)\big)_\beta^\gamma \neq 0\,\implies \alpha\length{\beta} = \alpha\length{\gamma} + \alpha \sum_{(\gamma',\mfl')\in\mcR} I(\gamma',\mfl') (\length{\gamma'} - 1).
	\end{align*}
	Assuming $\mathbf{c}\in \Rentwo^{(N)}$, we have the upper bound
	\begin{align}\label{ineq}
		\alpha\length{\beta} \leq \alpha\length{\gamma} + \alpha(N - 1) \length{I}.
	\end{align}
	Recall now \eqref{goD02}, which implies for all $\mfl\in \mfL$
	\begin{equation}\label{*p33}
		\big(\rhotil_\trineg(R_I)\big)_\beta^\gamma \neq 0 \implies \sum_{\gamma'\in\populated} I(\gamma',\mfl) \leq \sum_{k\in\N_0} \gamma(\mfl,k).
	\end{equation}
	In particular $\length{I}\leq \length{\gamma}$, which combined with \eqref{ineq} yields
	\begin{align*}
		\alpha\length{\beta} \leq N\alpha\length{\gamma}.
	\end{align*}
	Thus,
	\begin{align*}
		|T_\mathbf{c} \mbfX_{st \beta}| &\leq \sum_{I\in M(\renlietwo)} \tfrac{1}{I!} |\mathbf{c}^I| \sum_{\gamma\in\populated} \big|\big(\rhotil_\trineg(R_I)\big)_\beta^\gamma\big| |\mbfX_{st\,\gamma}|\\
		& \lesssim  \sum_{I\in M(\renlietwo)} \tfrac{1}{I!} |\mathbf{c}^I| \sum_{\gamma\in\populated} \big|\big(\rhotil_\trineg(R_I)\big)_\beta^\gamma\big| |t-s|^{\alpha \length{\gamma}}\\
		&\lesssim |t-s|^{\frac{\alpha}{N}\length{\beta}};
	\end{align*}
	here all sums are finite and inequalities are up to a constant depending on $\mathbf{c}$, $\mbfX$ and $\beta$. This shows (i); the proof of (ii) is the same, assuming $\mathbf{c}\in R_\subL^{(N)}$.
	
	\medskip
	
	Turning our attention to (iii), we repeat the argument isolating the contributions of $\mfl\in\subL$ in \eqref{grad03} and \eqref{*p33}, yielding
	\begin{align*}
		\alpha\length{\beta} \leq \alpha N \sum_{\mfl\in\subL}\sum_{k\in \N_0} \gamma(\mfl,l) + \alpha \sum_{\mfl\in\mfL\setminus\subL} \sum_{k\in \N_0} \gamma(\mfl,k).
	\end{align*}	
	If $\alpha N < \alphamin$, then we simply get $\alpha\length{\beta} \leq \alpha\length{\gamma}$. If $\alpha N > \alphamin$, we divide by $\alpha N$ and obtain
	\begin{align*}
		N^{-1} \length{\beta} &\leq \sum_{\mfl\in\subL}\sum_{k\in \N_0} \gamma(\mfl,l) + N^{-1} \sum_{\mfl\in\mfL\setminus\subL} \sum_{k\in \N_0} \gamma(\mfl,k)\\
		& \leq \sum_{\mfl\in\subL}\sum_{k\in \N_0} \gamma(\mfl,l) + \alpha \alphamin^{-1} \sum_{\mfl\in\mfL\setminus\subL} \sum_{k\in \N_0} \gamma(\mfl,k)\\
		& \leq \alphamin^{-1} \Big(\sum_{\mfl\in\subL}\alpha_\mfl\sum_{k\in \N_0} \gamma(\mfl,l) + \alpha \sum_{\mfl\in\mfL\setminus\subL} \sum_{k\in \N_0} \gamma(\mfl,k)\Big).
	\end{align*}	
	The claim follows as before.
\end{proof}	
\begin{remark}\label{rem:4.18}
	The reader is invited to compare Theorem \ref{th:trp} with \cite[Theorem 30]{BCFP}, taking into account that our allowed translations are restricted to the sets $\Rentwo$ and $R_\subL$. Items (i) and (ii) are consistent with the Hölder regularity of the translated rough paths obtained in \cite[Theorem 30 (i)]{BCFP}. Item (iii) is a slight generalization of \cite[Theorem 30 (ii)]{BCFP}: In our framework, the latter corresponds to $\mathbf{c}\in R_\circ^{(N)}$, taking $\alpha_0 = 1$. 
\end{remark}	
\section{Connection to trees}\label{sec::5}
We conclude this work studying the relation between multi-indices and decorated trees. For this we first introduce some notation. Given $\mfL$, we denote the set of $\mfL$-decorated trees by $\mcH_\mfL$. These trees can be inductively constructed as follows: We start with a set of nodes
\begin{align*}
	\{\Xi_\mfl\}_{\mfl\in\mfL},
\end{align*}
then any tree $\tau\in\mcH_\mfL$ can be written as
\begin{align}\label{tree03}
	\tau = \Xi_\mfl \prod_{j=1}^k \mcI(\tau_j),
\end{align}
where $\tau_j\in\mcH_\mfL$ and $\mcI$ grows an incoming edge at the root of its argument. We identify $\Xi_\mfl$ with the root of $\tau$. We denote by $\mcB_\mfL$ the linear span of $\mcH_\mfL$.

\subsection{Branched rough paths and their translations}
Branched rough paths \cite{Gub06} are rough paths indexed by decorated trees $\mcH_\mfL$. In the spirit of Butcher series \cite{Butcher72,CHV}, for a smooth $\{X^{\mfl}\}_{\mfl\in\mfL}$ we can build a collection of branched integrals indexed by trees following the recursion \cite[(10)]{Gub06}: Namely, we fix
\begin{align*}
	 \mbfXbranched_{st} [\Xi_\mfl] = \int_s^t d X_r^\mfl = X_t^\mfl - X_s^\mfl
\end{align*}	
and then iteratively for $\tau$ as in \eqref{tree03}
\begin{align}\label{brp53}
	\mbfXbranched_{st}[\tau] = \int_s^t \prod_{j=1}^k \mbfXbranched_{sr}[\tau_j]  dX_r^\mfl.
\end{align}
We also associate to each tree a functional (called \textit{elementary differential}, cf. e.~g. \cite[Subsection 2.1]{CHV}) of $(\mathbf{a},y)$ fixing
\begin{equation}\label{brp56}
	\Xi_\mfl [\mathbf{a},y] = a_\mfl(y)
\end{equation}
and then recursively for $\tau$ as in \eqref{tree03} (recall we are in the one-dimensional case)
\begin{equation}\label{brp55}
	\tau [\mathbf{a},y] = \tfrac{d^k a_\mfl}{dy^k} (y) \prod_{j=1}^k \tau_j [\mathbf{a},y].
\end{equation}
We also define the symmetry factor of a tree $\sigma : \mcH_\mfL \to \N$ fixing $\sigma(\Xi_\mfl) = 1$ and for $\tau$ as in \eqref{tree03} by 
\begin{equation}\label{brp52}
	\sigma(\tau) = I! \prod_{j=1}^k \sigma(\tau_j),\,\,\,\, I=\sum_{j=1}^k e_{\tau_j} \in M(\mcH_\mfL)
\end{equation}
(cf. e.~g. \cite[p. 430]{Brouder}). For simplicity, let us define $\tmbfXbranched$ by
\begin{equation}\label{brp54}
	\tmbfXbranched_{st} [\tau] = \tfrac{1}{\sigma(\tau)} \mbfXbranched_{st} [\tau].
\end{equation}
With this notation, \cite[Theorem 5.1]{Gub06} implies that the solution of \eqref{rde01} admits the representation
\begin{equation}\label{brp58}
	Y_t - Y_s = \sum_{\tau\in \mcH_\mfL} \tmbfXbranched_{st} [\tau] \tau[\mathbf{a},Y_s].
\end{equation}

\medskip

As in the discussion leading to \eqref{defD}, there is a natural shift structure associated to the functionals $\tau$, namely the one given by the composition law of B-series cf. \cite[(3)]{CHV}. In order to describe it, for $\tau,\tau'\in \mcH_\mfL$ we set
\begin{equation*}
	(\tau'\curvearrowright \tau)[\mathbf{a},y] := \tfrac{d}{dh}|_{h=0} \tau [\mathbf{a}, y + h\tau'[\mathbf{a},y]].
\end{equation*}
It is easy to derive a recursive construction of $\curvearrowright$: We have
\begin{equation*}
	\big(\tau' \curvearrowright \Xi_\mfl\big)[\mathbf{a},y] =  \tfrac{da_\mfl}{dy}(y) \tau'[\mathbf{a},y] = \big(\Xi_\mfl \mcI(\tau')\big)[\mathbf{a},y]
\end{equation*} 
and for $\tau$ as in \eqref{tree03} by the chain rule
\begin{align*}
	&\big(\tau' \curvearrowright \tau \big)[\mathbf{a},y]\\
	&\quad = \tfrac{d^{k+1}a_\mfl}{dy^{k+1}}(y) \tau'[\mathbf{a},y] \prod_{j=1}^k \tau_j [\mathbf{a},y] + \tfrac{d^{k}a_\mfl}{dy^{k}}(y) \sum_{j=1}^k (\tau' \curvearrowright\tau_j) [\mathbf{a}, y] \prod_{i\neq j} \tau_i [\mathbf{a},y]\\
	&\quad = \Big(\Xi_\mfl \mcI(\tau') \prod_{j=1}^k \tau_j + \Xi_\mfl \sum_{j=1}^k (\tau' \curvearrowright\tau_j) \prod_{i\neq j} \tau_i\Big)[\mathbf{a},y].
\end{align*}
Now obviously $\curvearrowright : \mcB_\mfL \times \mcB_\mfL \to \mcB_\mfL$ is the grafting pre-Lie product, defined recursively by
\begin{align*}
	\tau' \curvearrowright \Xi_\mfl &= \Xi_\mfl \mcI(\tau'),\\
	\tau'\curvearrowright \tau &= \Xi_\mfl \mcI(\tau') \prod_{j=1}^k \mcI(\tau_j) + \Xi_\mfl \sum_{j=1}^k \mcI(\tau'\curvearrowright \tau_j)\prod_{i\neq j} \mcI(\tau_i).
\end{align*}
$(\mcB_\mfL,\curvearrowright)$ is the free pre-Lie algebra over $\R^\mfL$, cf. \cite{CL}. It is clearly graded under $N:\mcH_\mfL \to \N$, the number of nodes, recursively given by
\begin{equation*}
	N(\Xi_\mfl) = 1,\,\, N(\tau) = 1 + \sum_{j=1}^k N(\tau_j),
\end{equation*}
and is furthermore $1$-connected ($N(\tau)$ $\geq$ $1$ for every rooted tree $\tau$). The universal enveloping algebra $\rmU(\mcB_\mfL,\curvearrowright)$ is the so-called Grossman-Larson Hopf algebra, where the Grossman-Larson product is, in our representation, the concatenation product. If instead of the canonical duality pairing one imposes\footnote{ In the choice of the pairing \eqref{tree04}, we are following \cite[Section 4]{BCCH}.}
\begin{align}\label{tree04}
	\langle \tau',\tau \rangle_\sigma := \delta_\tau^{\tau'}\sigma(\tau),
\end{align}
the concatenation product is dual to the Connes-Kreimer coproduct, and therefore $(\R[\mcH_\mfL],\Delta_\curvearrowright)$ given in Proposition \ref{prop:tra01} is the Connes-Kreimer Hopf algebra \cite{CK1}. We refer the reader to \cite{Hoffman} for a complete exposition of the duality between Connes-Kreimer and Grossman-Larson.

\medskip

As in Lemma \ref{lem:3.5}, we now characterize $\tmbfXbranched$ as the solution of a differential equation.
\begin{lemma}\label{lem:5.1}
	$\tmbfXbranched_{st} = \sum_{\tau\in\mcH_\mfL} \tmbfXbranched_{st}[\tau] \tau$ $\in$ $\mcB_\mfL^*$ satisfies
	\begin{equation}\label{brp51}
		\tmbfXbranched_{st} = \sum_{\mfl\in\mfL} \int_s^t \rho_\curvearrowright(\exp_\curvearrowright(\tmbfXbranched_{sr})) \Xi_\mfl dX_r^\mfl.
	\end{equation}
\end{lemma}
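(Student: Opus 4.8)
The plan is to mirror exactly the proof of Lemma~\ref{lem:3.5}, replacing the Novikov pre-Lie product $D$ and the symbols $\z_{(\mfl,k)}$ by the grafting pre-Lie product $\curvearrowright$ and the tree symbols. The key structural fact to establish is the analogue of \eqref{goD}: as endomorphisms of $\mcB_\mfL$,
\begin{equation*}
	\rho_\curvearrowright(\tau_1 \pprod_\curvearrowright \cdots \pprod_\curvearrowright \tau_k)\Xi_\mfl = \sum_{\text{(ways to graft)}} \Xi_\mfl \prod_{j=1}^k \mcI(\tau_j) + (\text{lower terms}),
\end{equation*}
or more precisely that $\tfrac{1}{k!}\rho_\curvearrowright(\tau_1 \pprod_\curvearrowright \cdots \pprod_\curvearrowright \tau_k)\Xi_\mfl$ evaluates to $\tfrac{1}{k!}\sum \Xi_\mfl \mcI(\tau_{\sigma(1)})\cdots\mcI(\tau_{\sigma(k)})$ summed over injections, which upon collecting equals the appropriate sum over trees $\tau = \Xi_\mfl\prod\mcI(\tau_j)$ with the symmetry factor $I!$ from \eqref{brp52}. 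The cleanest route is to prove, by induction on $U\in\rmU(\mcB_\mfL,\curvearrowright)$ exactly as in Lemma~\ref{lem:3.5}, the identity
\begin{equation*}
	\rho_\curvearrowright(U \pprod_\curvearrowright \tau') \Xi_\mfl = \big(\rho_\curvearrowright(U)\Xi_\mfl\big)\,\text{(grown with an extra }\mcI(\tau')\text{ subtree)},
\end{equation*}
i.e.\ that grafting $\tau'$ onto the root commutes with the residual action of $U$; for $U=1$ this is just $\Xi_\mfl\mapsto\Xi_\mfl\mcI(\tau')$, the defining recursion of $\curvearrowright$, and the induction step uses \eqref{go06}, the algebra-morphism property of $\rho_\curvearrowright$, and the Leibniz-type structure of grafting at the root. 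This is the tree analogue of the auxiliary identity $\rho_D(U\pprod_D\z^\beta)=\z^\beta\rho_D(U)D$ proven inside Lemma~\ref{lem:3.5}.

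Granting this, one computes
\begin{equation*}
	\tfrac{1}{k!}\rho_\curvearrowright(\tau_1\pprod_\curvearrowright\cdots\pprod_\curvearrowright\tau_k)\Xi_\mfl
	= \tfrac{1}{k!}\sum_{\text{injections}} \Xi_\mfl\,\mcI(\tau_{\cdot})\cdots\mcI(\tau_{\cdot}),
\end{equation*}
and combining with the resummation formula \eqref{formal} for $\exp_\curvearrowright(\tmbfXbranched_{sr})$, the right-hand side of \eqref{brp51} becomes
\begin{equation*}
	\sum_{\mfl}\int_s^t \sum_{k\geq 0}\tfrac{1}{k!}\sum_{\tau_1,\dots,\tau_k\in\mcH_\mfL}\tmbfXbranched_{sr}[\tau_1]\cdots\tmbfXbranched_{sr}[\tau_k]\;\Xi_\mfl\prod_{j=1}^k\mcI(\tau_j)\;dX_r^\mfl.
\end{equation*}
Grouping, for a fixed $\tau=\Xi_\mfl\prod_{j=1}^k\mcI(\tau_j)$ the number of ordered tuples $(\tau_1,\dots,\tau_k)$ producing it equals $k!/I!$ where $I=\sum_j e_{\tau_j}$, so the $\tau$-coefficient of the right-hand side is $\tfrac{1}{I!}\int_s^t\prod_{j}\tmbfXbranched_{sr}[\tau_j]\,dX_r^\mfl$. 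On the other hand, dividing the recursion \eqref{brp53} by $\sigma(\tau)=I!\prod_j\sigma(\tau_j)$ and using \eqref{brp54} gives precisely $\tmbfXbranched_{st}[\tau]=\tfrac{1}{I!}\int_s^t\prod_j\tmbfXbranched_{sr}[\tau_j]\,dX_r^\mfl$ for the same $\tau$ (and $\tmbfXbranched_{st}[\Xi_\mfl]=X_t^\mfl-X_s^\mfl$ matches the $k=0$, base term). Hence the coefficients agree for every $\tau$, proving \eqref{brp51}.

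The main obstacle is the bookkeeping of symmetry factors: one must check that the combinatorial factor $k!/I!$ counting preimages of a tree under "list of branches $\mapsto$ product of branches" exactly reconciles the $1/k!$ in $\exp_\curvearrowright$ with the $1/\sigma(\tau)$ in the definition of $\tmbfXbranched$. This is where care is needed, but it is the standard identity relating the concatenation/Grossman--Larson picture to the tree picture (cf.\ the role of \eqref{tree04} and \eqref{brp52}); it is entirely analogous to how, in Lemma~\ref{lem:3.5}, the multi-index factorial $\beta!$ hidden in $\z^\beta$ and in $A_I$ of \eqref{go30} conspires to make \eqref{com31} match \eqref{rou01}. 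Everything else — the auxiliary root-grafting identity and the resummation — is a direct transcription of the argument already given for $(T,D)$, so no genuinely new idea is required beyond keeping the symmetry factors straight.
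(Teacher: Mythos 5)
Your proposal is correct and follows essentially the same route as the paper: identify $\rho_\curvearrowright(\tau_1\pprod_\curvearrowright\cdots\pprod_\curvearrowright\tau_k)\Xi_\mfl$ with $\Xi_\mfl\prod_j\mcI(\tau_j)$, then match $\tau$-coefficients using $\sigma(\tau)=I!\prod_j\sigma(\tau_j)$, which is exactly how the paper reduces \eqref{brp51} to \eqref{brp53}--\eqref{brp54}. The only difference is that the paper simply cites \cite[Subsection 3.1]{GO2} for the simultaneous-grafting interpretation of $\rho_\curvearrowright$ on the Guin--Oudom product, whereas you reprove the needed special case by the induction of Lemma \ref{lem:3.5}; your bookkeeping of the factors $1/k!$, $k!/I!$ and $1/\sigma(\tau)$ is correct.
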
	
\begin{proof}
	First note that the r.~h.~s. of \eqref{brp51} is well-defined by Section \ref{sec::2} since $(\mcB_\mfL,\curvearrowright)$ is graded and $1$-connected. In addition, by \cite[Subsection 3.1]{GO2}, $\rho_\curvearrowright(\tau_1 \pprod_\curvearrowright\cdots \pprod_\curvearrowright \tau_k)$ is the simultaneous grafting of $\tau_1$,...,$\tau_k$: In particular, $\tau$ in \eqref{tree03} can be expressed as
	\begin{equation*}
		\tau = \rho_\curvearrowright(\tau_1 \pprod_\curvearrowright\cdots \pprod_\curvearrowright \tau_k) \Xi_\mfl.
	\end{equation*}
	Since $(\mcB_\mfL,\curvearrowright)$ is free, this decomposition is unique. We now let $I = \sum_{j=1}^k e_{\tau_j}$ $\in$ $M(\mcH_\mfL)$ and consider the corresponding Guin-Oudom basis element; then by the recursive definition \eqref{brp52} it holds
	\begin{equation*}
		\tfrac{\sigma(\tau_1)\cdots \sigma(\tau_k)}{\sigma(\tau)}\tau = \rho_\curvearrowright\big(\tfrac{1}{I!}\tau_1 \pprod_\curvearrowright\cdots \pprod_\curvearrowright \tau_k\big) \Xi_\mfl.
	\end{equation*}
	Now, appealing to \eqref{exp01}, we look at \eqref{brp51} componentwise, so that \eqref{brp51} is equivalent to showing for every $\tau$ of the form \eqref{tree03} 
	\begin{equation*}
		\tmbfXbranched_{st}[\tau] = \tfrac{\sigma(\tau_1)\cdots \sigma(\tau_k)}{\sigma(\tau)} \int_s^t \prod_{j=1}^k \tmbfXbranched_{st}[\tau_j] dX_r^\mfl.
	\end{equation*}
	This in turn follows from \eqref{brp53} and \eqref{brp54}.
\end{proof}	
\begin{remark}
	In the spirit of Remark \ref{rem:srp01}, equation \eqref{brp51} implies that $\tilde{\mathbf{X}}^{\textnormal{Br}} := \exp_\curvearrowright(\tmbfXbranched)$ solves
	\begin{equation*}
		d\tilde{\mathbf{X}}_{st}^{\textnormal{Br}} = \tilde{\mathbf{X}}^{\textnormal{Br}}_{st} dX_t
	\end{equation*} 
	where concatenation is understood in $\rmU (\mathcal{B}_\mfL,\curvearrowright)$; since the concatenation product is identified with the Grossman-Larson product, this is the ODE in \cite[proof of Theorem 4.2]{BFPP}.
\end{remark}	

\medskip

Translation maps $M_\mathbf{v}$, $\mathbf{v} = \{v_\mfl\}_{\mfl\in\mfL}\subset \mcB_\mfL^*$ in \cite[Subsection 3.2.3]{BCFP} are defined setting
\begin{align}\label{tree07}
	M_\mathbf{v} \Xi_\mfl = \Xi_\mfl + v_\mfl
\end{align}
and uniquely extending to a pre-Lie morphism in\footnote{ Actually, \cite{BCFP} chooses the standard pairing instead of \eqref{tree04}, so the pre-Lie product is rather the one in \cite[(99)]{CK1}. The two perspectives are of course equivalent.} $(\mcB_\mfL^*,\curvearrowright)$. One then recovers the effect on \eqref{rde01} of \cite[Theorem 38 (i)]{BCFP} taking $\mathbf{v} = \{v_\mfl\}_{\mfl\in\mfL}\subset \mcB_\mfL$ simply as
\begin{equation*}
	d Y_t = \sum_{\mfl\in\mfL} \big(a_\mfl(Y_t) + v_\mfl[\mathbf{a},Y_t]\big) d X^\mfl_t.
\end{equation*}
\subsection{Trees vs. multi-indices}
To establish the algebraic and combinatorial relation between trees and multi-indices, we first observe the following identity, already stated in \cite[Lemma 6.1]{LOT} (we omit the proof, as it is essentially the same).
\begin{lemma}\label{lem::tree1}
	Let $X^\mfl$ be smooth for all $\mfl\in \mfL$. Then
	\begin{align}\label{tree01}
		\mbfX_{st\,\beta} = \sigma(\beta) \sum_{\tau \in {\rm T}_\beta} \tmbfXbranched_{st}[\tau],
	\end{align}
	where
	\begin{align}
		{\rm T}_\beta := \big\{ \tau \in \mcH_\mfL\,|\, &\tau\,\mbox{contains }\beta(\mfl,k)\,\mbox{nodes of the form }\Xi_\mfl\nonumber\\
		&\mbox{with }k\,\mbox{outgoing edges}\big\}\label{brp59}
	\end{align}
	and
	\begin{equation*}
		\sigma(\beta):=\prod_{(\mfl,k)} (k!)^{\beta(\mfl,k)}
	\end{equation*}
	 is the size of the group of permutations of the descendents of every node of trees $\tau\in {\rm T}_\beta$.
\end{lemma}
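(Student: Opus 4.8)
The plan is to establish \eqref{tree01} by an induction on the length $\length{\beta}$, matching the recursive structures on both sides: the multi-index hierarchy \eqref{rou01} on the left and the branched recursion \eqref{brp53}--\eqref{brp54} on the right. First I would unfold $\mbfX_{st\,\beta}$ via \eqref{rou01}: writing $\beta = e_{(\mfl,k)} + \beta_1 + \cdots + \beta_k$, one sees $\mbfX_{st\,\beta}$ as an integral of products $\mbfX_{su\,\beta_1}\cdots\mbfX_{su\,\beta_k}$ against $dX^\mfl_u$. Applying the induction hypothesis to each factor $\mbfX_{su\,\beta_j} = \sigma(\beta_j)\sum_{\tau_j\in {\rm T}_{\beta_j}}\tmbfXbranched_{su}[\tau_j]$ turns the right-hand side into a sum over tuples $(\mfl; k; \tau_1,\dots,\tau_k)$ with $\tau_j \in {\rm T}_{\beta_j}$, integrated against $dX^\mfl_u$. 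On the other side, \eqref{brp53}--\eqref{brp54} express $\tmbfXbranched_{st}[\tau]$ for $\tau = \Xi_\mfl\prod_{j}\mcI(\tau_j)$ as $\tfrac{1}{\sigma(\tau)}\int_s^t \prod_j \sigma(\tau_j)\tmbfXbranched_{sr}[\tau_j]\,dX^\mfl_r$, using \eqref{brp52}.

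The heart of the matter is then a purely combinatorial bookkeeping lemma: for $\tau\in {\rm T}_\beta$ written as $\Xi_\mfl \prod_{j=1}^k \mcI(\tau_j)$, the root has outdegree $k$, so it contributes a factor $e_{(\mfl,k)}$ and the remaining nodes distribute among the $\tau_j$, i.e. $e_{(\mfl,k)} + \sum_j e_{\beta(\tau_j)} = \beta$ where $\beta(\tau_j)$ is the ``node-type profile'' of $\tau_j$. Conversely, every decomposition $\beta = e_{(\mfl,k)} + \beta_1 + \cdots + \beta_k$ (ordered, allowing repeats) together with a choice $\tau_j \in {\rm T}_{\beta_j}$ produces a planted forest $\prod_j \mcI(\tau_j)$; but trees are unordered, so the genuinely distinct $\tau$'s are indexed by multisets of the $\tau_j$. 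I would track the symmetry factors carefully: the relation $\sigma(\tau) = I!\prod_j \sigma(\tau_j)$ with $I = \sum_j e_{\tau_j}$ means $I!$ exactly counts the orderings of the children that give the same tree. Combining this with the combinatorial identity $\sigma(\beta) = \binom{\length{\beta}\text{-type data}}{\dots}\cdot\text{stuff}$ — more precisely, the multinomial counting how many ordered decompositions $\beta = e_{(\mfl,k)}+\beta_1+\cdots+\beta_k$ correspond to a fixed multiset $\{\beta_1,\dots,\beta_k\}$, multiplied by $\prod_j\sigma(\beta_j)$ and divided by $I!$, equals $\sigma(\beta)/\big(k!^{?}\cdots\big)$ — is the bookkeeping that makes the two sides agree. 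The cleanest route is to compare the generating identities directly: plug the induction hypothesis into $\sigma(\beta)\sum_{\tau\in{\rm T}_\beta}\tmbfXbranched_{st}[\tau]$, group the trees $\tau$ by the underlying multiset of subtrees, use $\sigma(\tau) = I!\prod\sigma(\tau_j)$ to rewrite $\tmbfXbranched_{st}[\tau]$, and observe that the sum over multisets with weight $1/I!$ reassembles into the unrestricted ordered sum $\sum_{e_{(\mfl,k)}+\beta_1+\cdots+\beta_k = \beta}$, at which point matching with \eqref{rou01} is immediate after checking the scalar factors $\sigma(\beta)$ vs. $\prod\sigma(\beta_j)$ vs. $\sigma(\beta)=\prod_{(\mfl,k)}(k!)^{\beta(\mfl,k)}$ cancel correctly (the $k!$ from $\z_{(\mfl,k)} = \tfrac{1}{k!}\partial^k a_\mfl$ versus $\tfrac{d^k a_\mfl}{dy^k}$ in \eqref{brp55} is precisely what $\sigma(\beta)$ absorbs).

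The base case $\length{\beta}=1$ is $\beta = e_{(\mfl,0)}$: then ${\rm T}_\beta = \{\Xi_\mfl\}$, $\sigma(\beta)=1$, $\sigma(\Xi_\mfl)=1$, and both sides equal $X^\mfl_t - X^\mfl_s$. I expect the main obstacle to be exactly the symmetry-factor accounting when subtrees $\tau_j$ coincide (or when the profiles $\beta_j$ coincide): one must be scrupulous that the passage from ordered tuples $(\tau_1,\dots,\tau_k)$ to unordered trees introduces precisely the factor $1/I!$ already present in the Guin--Oudom / B-series normalization, and that $\sigma(\beta) = \prod_j \sigma(\beta_j)\cdot(\text{multinomial in the }k\text{'s})$ matches. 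Since \cite[Lemma 6.1]{LOT} proves the analogous statement in the SPDE setting and the paper explicitly says the proof is ``essentially the same,'' I would largely import that argument, only adapting notation to the rough-path indexing $(\mfl,k)\in\mfL\times\N_0$ and to the recursions \eqref{rou01}, \eqref{brp53}, \eqref{brp54} established here; the one genuinely new check is that the extra factor $k!$ distinguishing $\z_{(\mfl,k)}$ from the elementary differential is consistently carried by $\sigma(\beta)$, which follows from $\sigma(\beta) = \prod_{(\mfl,k)}(k!)^{\beta(\mfl,k)}$ and the multiplicativity of both $\sigma$ and the factor $k!$ under the tree recursion.
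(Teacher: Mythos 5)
Your proposal is correct and is exactly the argument the paper intends: the paper omits the proof, deferring to \cite[Lemma 6.1]{LOT}, and that argument is precisely your induction on $\length{\beta}$ — unfold \eqref{rou01} and \eqref{brp53}--\eqref{brp54}, convert the unordered sum over the root's children (weighted by $1/I!$ coming from $\sigma(\tau)=I!\prod_j\sigma(\tau_j)$) into the ordered sum over decompositions $\beta=e_{(\mfl,k)}+\beta_1+\cdots+\beta_k$ at the cost of a factor $1/k!$, and apply the induction hypothesis to each factor. The scalar bookkeeping you leave with a question mark resolves to the single identity $\sigma(\beta)=k!\,\prod_{j=1}^{k}\sigma(\beta_j)$ for any such decomposition, which is immediate from $\sigma(\beta)=\prod_{(\mfl,k)}(k!)^{\beta(\mfl,k)}$ and closes the induction.
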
	
\begin{remark} A crucial observation from this construction is that for every $\tau\in {\rm T}_\beta$
\begin{itemize}
	\item $\length{\beta} = N(\tau)$, the number of nodes of $\tau$;
	\item $\sum_{(\mfl,k)} k\beta(\mfl,k)$ is the number of edges of $\tau$;
	\item the difference for a rooted tree is always $1$, which reflects $[\beta] =1$, cf. \eqref{rou10}.
\end{itemize}
In particular, this shows that multi-indices are more efficient than trees. We may even deduce a combinatorial interpretation: In the undecorated case, the number of trees with $n$ nodes is the Catalan number $C_{n-1} := \frac{(2n-2)!}{n! (n-1)!}$ (cf. e.~g. \cite{Stanley}), whereas the number of multi-indices $\beta\in\populated$ with $\length{\beta} = n$, which can be identified with the number of fertility configurations of $n-1$ edges, is the number $p(n-1)$ of partitions of $(n-1)$ in a sum of positive integers (cf. e.~g. \cite{Andrews}).
\end{remark}
\begin{remark}\label{rem:nov02}
	As a followup to Lemma \ref{lem:nov01}, the ideal defined in \cite[Proposition 7.9]{DL02} identifies trees with the same fertility configurations as \eqref{brp59}.  
\end{remark}	

\medskip

Next to \eqref{tree01}, the functionals \eqref{mi07} and \eqref{brp55} are related as follows.
\begin{lemma}\label{lem:6.5}
	For all $\tau\in \rmT_\beta$,
	\begin{equation}\label{brp57}
		\tau [\mathbf{a},y] = \sigma(\beta) \z^\beta [\mathbf{a},y].
	\end{equation}
\end{lemma}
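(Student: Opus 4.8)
The plan is to prove \eqref{brp57} by induction on the number of nodes $N(\tau)$, which by the first bullet of the preceding remark equals $\length{\beta}$. The recursive definitions \eqref{brp55} of the elementary differential and \eqref{mi07} of $\z_{(\mfl,k)}$ make this a natural strategy, and the combinatorial content is entirely carried by the factor $\sigma(\beta) = \prod_{(\mfl,k)} (k!)^{\beta(\mfl,k)}$.

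First I would treat the base case. If $N(\tau) = 1$, then $\tau = \Xi_\mfl$ for some $\mfl\in\mfL$, and the associated multi-index is $\beta = e_{(\mfl,0)}$, so $\sigma(\beta) = (0!)^1 = 1$. On one side, \eqref{brp56} gives $\Xi_\mfl[\mathbf{a},y] = a_\mfl(y)$; on the other, \eqref{mi07} gives $\z^{e_{(\mfl,0)}}[\mathbf{a},y] = \z_{(\mfl,0)}[\mathbf{a},y] = \tfrac{1}{0!}\partial^0 a_\mfl(y) = a_\mfl(y)$. Hence \eqref{brp57} holds.

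For the inductive step, take $\tau$ of the form \eqref{tree03}, i.e. $\tau = \Xi_\mfl \prod_{j=1}^k \mcI(\tau_j)$, and let $\beta_j\in M(\mfL\times\N_0)$ be the multi-index associated to $\tau_j$ (so $\tau_j\in\rmT_{\beta_j}$). Counting nodes of the form $\Xi_{\mfl'}$ with a given out-degree, one sees that $\beta = e_{(\mfl,k)} + \beta_1 + \cdots + \beta_k$. By \eqref{brp55} and the induction hypothesis,
\begin{align*}
	\tau[\mathbf{a},y] = \tfrac{d^k a_\mfl}{dy^k}(y) \prod_{j=1}^k \tau_j[\mathbf{a},y] = \tfrac{d^k a_\mfl}{dy^k}(y) \prod_{j=1}^k \sigma(\beta_j) \z^{\beta_j}[\mathbf{a},y].
\end{align*}
Now $\tfrac{d^k a_\mfl}{dy^k}(y) = k!\, \z_{(\mfl,k)}[\mathbf{a},y]$ by \eqref{mi07}, so
\begin{align*}
	\tau[\mathbf{a},y] = k!\,\Big(\prod_{j=1}^k \sigma(\beta_j)\Big)\, \z_{(\mfl,k)}[\mathbf{a},y]\prod_{j=1}^k \z^{\beta_j}[\mathbf{a},y] = k!\,\Big(\prod_{j=1}^k \sigma(\beta_j)\Big)\, \z^{\beta}[\mathbf{a},y].
\end{align*}
It remains to check the combinatorial identity $\sigma(\beta) = k!\prod_{j=1}^k \sigma(\beta_j)$, where $\beta = e_{(\mfl,k)}+\sum_j \beta_j$. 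Since $\sigma$ is multiplicative over the decomposition of a multi-index into $e_{(\mfl',k')}$'s, we have $\sigma\big(\sum_j \beta_j\big) = \prod_j \sigma(\beta_j)$; and $\sigma(e_{(\mfl,k)}) = k!$, so $\sigma(\beta) = k!\prod_j\sigma(\beta_j)$, as needed. This closes the induction.

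The step I expect to require the most care is the bookkeeping of which multi-index $\beta$ is attached to $\tau$ and the verification that $\beta = e_{(\mfl,k)} + \beta_1 + \cdots + \beta_k$ — this is where the definition \eqref{brp59} of $\rmT_\beta$ (nodes $\Xi_\mfl$ with $k$ outgoing edges) must be matched precisely against the tree surgery \eqref{tree03}: the new root $\Xi_\mfl$ has exactly $k$ outgoing edges, contributing the $e_{(\mfl,k)}$ term, while the out-degrees of all nodes inside the $\tau_j$ are unchanged. Everything else is a routine unwinding of the recursions, and the factor $k!$ on both the analytic side (from $\tfrac{d^k}{dy^k}$ versus $\z_{(\mfl,k)}$) and the combinatorial side (from $\sigma(e_{(\mfl,k)}) = k!$) matches automatically.
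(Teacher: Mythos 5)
Your proof is correct and follows essentially the same route as the paper: induction on the tree structure with base case $\Xi_\mfl$, the identification $\beta = e_{(\mfl,k)} + \beta_1 + \cdots + \beta_k$, and the recursions \eqref{mi07} and \eqref{brp55}. You merely spell out the combinatorial bookkeeping $\sigma(\beta) = k!\prod_{j}\sigma(\beta_j)$ that the paper leaves implicit, and that step is verified correctly.
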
	
\begin{proof}
	For $\tau = \Xi_\mfl$, which is the unique element of $\rmT_{e_{(\mfl,0)}}$, it follows from \eqref{mi07} and \eqref{brp56}. For $\tau$ of the form \eqref{tree03}, let us assume that for $j=1,...,k$ we have $\tau_j \in \rmT_{\beta_j}$. Then $\tau \in \rmT_\beta$ with $\beta = e_{(\mfl,k)} + \beta_{1} +... + \beta_k$, so \eqref{brp57} follows recursively from \eqref{mi07} and \eqref{brp55}.
\end{proof}	
\begin{remark}
	The combination of \eqref{tree01} and \eqref{brp57} implies the equality between the expansions \eqref{expansion} and \eqref{brp58}: Indeed,
	\begin{equation*}
		\sum_\beta \mbfX_{st\,\beta}\z^\beta[\mathbf{a},Y_s] = \sum_\beta \sum_{\tau\in \rmT_\beta} \tmbfXbranched_{st}[\tau] \sigma(\beta)\z^\beta[\mathbf{a},Y_s] = \sum_\tau \tmbfXbranched_{st}[\tau] \tau[\mathbf{a},Y_s].
	\end{equation*}
\end{remark}	

\medskip

Consider the map $\Psi : \mcB_\mfL \to T$ generated by \eqref{brp57}
\begin{align*}
	\Psi[\tau] := \sum_{\beta}\sigma(\beta) \z^\beta  \delta_{\tau \in {\rm T}_\beta}.
\end{align*}
Of course this sum only involves one term, because $\beta$ is uniquely determined by $\tau$. Actually, this defines a map $\Psi : \mcB_\mfL^* \to T^*$, because ${\rm T}_\beta$ is a finite set for every $\beta$. The proof of Lemma \ref{lem:6.5} establishes the following recursion: $\Psi$ is defined by
\begin{align*}
	\Psi[\Xi_\mfl] = \z_{(\mfl,0)}
\end{align*}
and then for $\tau$ as in \eqref{tree03}
\begin{align}\label{tree05}
	\Psi[\tau] = k! \z_{(\mfl,k)} \prod_{j=1}^k \Psi[\tau_j],
\end{align}
which is precisely \cite[(2.48),(2.49)]{BL23} in our simpler situation. This map is a pre-Lie morphism; the following lemma is a particular case of \cite[Proposition 2.21]{BL23}.
\begin{lemma}
	$\Psi : (\mcB_\mfL,\curvearrowright)\to(T,D)$ is a pre-Lie morphism, i.~e.
	\begin{align}\label{tree02}
		\Psi [\tau \curvearrowright \tau'] = \Psi[\tau] D \Psi[\tau'].
	\end{align}
\end{lemma}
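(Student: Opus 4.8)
The plan is to prove the pre-Lie morphism identity \eqref{tree02} by induction on the number of nodes of $\tau'$, exploiting the recursive descriptions of both the grafting product $\curvearrowright$ on $\mcB_\mfL$ and the pre-Lie product $D$ on $T$, together with the recursion \eqref{tree05} defining $\Psi$. The base case is $\tau' = \Xi_\mfl$: by definition $\tau\curvearrowright\Xi_\mfl = \Xi_\mfl\mcI(\tau)$, so $\Psi[\tau\curvearrowright\Xi_\mfl] = k'!\,\z_{(\mfl,k'+1)}\prod_j \Psi[\tau_j]$ if $\tau$ has root $\Xi_{\mfl'}$ with children $\tau_1,\dots,\tau_{k'}$... wait, more simply: $\Psi[\Xi_\mfl\mcI(\tau)] = 1!\cdot\z_{(\mfl,1)}\Psi[\tau]$ when $\tau$ is grafted as the unique child, i.e. $\Psi[\tau\curvearrowright\Xi_\mfl] = \z_{(\mfl,1)}\Psi[\tau]$. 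On the other hand, $\Psi[\Xi_\mfl] = \z_{(\mfl,0)}$ and $\z_{(\mfl,0)} D \Psi[\tau]$: since $D = \sum_{(\mfl'',k'')}(k''+1)\z_{(\mfl'',k''+1)}\partial_{\z_{(\mfl'',k'')}}$, applying $D$ to $\z_{(\mfl,0)}$ against the ``argument'' $\Psi[\tau]$ via the vector-field pre-Lie product $\pi D \pi'$ gives $\z_{(\mfl,0)} D \Psi[\tau] = \z_{(\mfl,1)}\Psi[\tau]$ (the single term $k''=0$, $\mfl''=\mfl$ survives, with coefficient $1$). So the base case matches.

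For the induction step, suppose \eqref{tree02} holds for all trees with fewer nodes than $\tau'$, and write $\tau' = \Xi_\mfl\prod_{j=1}^k \mcI(\tau'_j)$ as in \eqref{tree03}. The recursive formula for grafting reads
\begin{align*}
	\tau \curvearrowright \tau' = \Xi_\mfl \mcI(\tau)\prod_{j=1}^k \mcI(\tau'_j) + \Xi_\mfl \sum_{j=1}^k \mcI(\tau\curvearrowright\tau'_j)\prod_{i\neq j}\mcI(\tau'_i).
\end{align*}
Applying $\Psi$ and using \eqref{tree05} together with the fact that $\Psi$ is multiplicative on forests (which follows from \eqref{tree05} since $\Psi$ sends a forest to the product of the images, by the way $\sigma$ and the fertility configuration behave under disjoint union — this is essentially built into \eqref{brp57}), the first summand becomes $(k+1)!\,\z_{(\mfl,k+1)}\Psi[\tau]\prod_{j=1}^k\Psi[\tau'_j]$ and the $j$-th term of the second summand becomes $k!\,\z_{(\mfl,k)}\,\Psi[\tau\curvearrowright\tau'_j]\prod_{i\neq j}\Psi[\tau'_i]$, to which the induction hypothesis applies, giving $k!\,\z_{(\mfl,k)}\,(\Psi[\tau]D\Psi[\tau'_j])\prod_{i\neq j}\Psi[\tau'_i]$. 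On the other side, I would compute $\Psi[\tau] D \Psi[\tau']$ directly: $\Psi[\tau'] = k!\,\z_{(\mfl,k)}\prod_j\Psi[\tau'_j]$, and since $D$ acts as a derivation followed by a multiplication, $\Psi[\tau] D \big(k!\,\z_{(\mfl,k)}\prod_j\Psi[\tau'_j]\big)$ splits, by the Leibniz rule, into the term where $D$ hits $\z_{(\mfl,k)}$ — yielding $k!\,(k+1)\,\z_{(\mfl,k+1)}\Psi[\tau]\prod_j\Psi[\tau'_j] = (k+1)!\,\z_{(\mfl,k+1)}\Psi[\tau]\prod_j\Psi[\tau'_j]$ — plus the terms where $D$ hits one of the factors $\Psi[\tau'_j]$, each contributing $k!\,\z_{(\mfl,k)}\,(\Psi[\tau]D\Psi[\tau'_j])\prod_{i\neq j}\Psi[\tau'_i]$. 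Matching term by term completes the induction.

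The main technical point — and the step I expect to require the most care — is making precise the interaction of $D$ with the vector-field structure on products: one must justify that ``$\Psi[\tau] D \Psi[\tau']$'' with $\Psi[\tau'] = k!\,\z_{(\mfl,k)}\prod_j\Psi[\tau'_j]$ obeys exactly a Leibniz-type expansion. This is legitimate because $D$ is genuinely the composition ``multiply by $\z^\gamma$, then apply the derivation $\sum(k''+1)\z_{(\mfl'',k''+1)}\partial_{\z_{(\mfl'',k'')}}$'' — the pre-Lie product $\pi D \pi'$ means precisely $\pi \cdot (D\pi')$, and $D$ is a derivation on $\R[\mfL\times\N_0]$ by construction \eqref{defD}, so the Leibniz rule on the product $\z_{(\mfl,k)}\prod_j\Psi[\tau'_j]$ is automatic; one just has to track that $D$ applied to $\z_{(\mfl,k)}$ produces $(k+1)\z_{(\mfl,k+1)}$ and nothing else. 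A secondary bookkeeping point is confirming that $\Psi$ is an algebra morphism on the forest algebra (so that $\Psi$ of a product of trees is the product of the $\Psi$'s), which is immediate from \eqref{tree05} and the multiplicativity of $\sigma(\beta)$ and of the fertility-configuration count under disjoint union of trees; this is implicitly contained in Lemma \ref{lem:6.5} and can be invoked directly. No subtler obstruction is anticipated: the identity is, in the end, a verification that the two recursions are compatible, and the grading/finiteness issues needed to make $\Psi:\mcB_\mfL^*\to T^*$ well-defined have already been dealt with before the statement.
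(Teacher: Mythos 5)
Your proof is correct. The paper itself does not prove this lemma at all --- it is stated as a particular case of \cite[Proposition 2.21]{BL23} --- so your self-contained structural induction on $\tau'$ is a genuine addition rather than a reproduction: the base case $\tau'=\Xi_\mfl$ reduces to $D\z_{(\mfl,0)}=\z_{(\mfl,1)}$, and the induction step is exactly the compatibility of the recursion for $\curvearrowright$, the recursion \eqref{tree05} for $\Psi$, and the Leibniz rule for the derivation $D$ of \eqref{defD} applied to $k!\,\z_{(\mfl,k)}\prod_j\Psi[\tau'_j]$; the two sides match term by term as you say. Two small remarks: in the base case you write the right-hand side as ``$\z_{(\mfl,0)}\,D\,\Psi[\tau]$'' when it should read $\Psi[\tau]\,D\,\Psi[\Xi_\mfl]=\Psi[\tau]\,D\,\z_{(\mfl,0)}$ (the derivation hits the \emph{second} argument in the convention $\pi D\pi'=\pi\cdot(D\pi')$); your actual computation applies $D$ to $\z_{(\mfl,0)}$ and multiplies by $\Psi[\tau]$, which is the correct operation, so this is only a transposition of symbols. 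Second, the ``forest multiplicativity'' of $\Psi$ you worry about is not really a separate ingredient: \eqref{tree05} already produces the product $\prod_j\Psi[\tau_j]$ directly, so no extra justification is needed beyond extending \eqref{tree05} multilinearly when a child is a linear combination of trees (as happens for $\mcI(\tau\curvearrowright\tau'_j)$).
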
	
From this property, $\Psi$ lifts to a Hopf algebra morphism
\begin{align*}
	\Psi : \rmU(\mcB_\mfL, \curvearrowright) \longrightarrow \rmU(T,D).
\end{align*}
The transposition, combined with the duality of Grossman-Larson and Connes-Kreimer \cite{Hoffman}, allows us to conclude the following.
\begin{corollary}
	Let $\Psi^\dagger$ be the transposition of $\Psi$ with respect to the pairing \eqref{tree04}. Then $\Psi^\dagger$ is a Hopf algebra morphism from $(\R[\populated],\Delta_D)$ to the Hopf algebra of Connes-Kreimer \cite{CK1}.
\end{corollary}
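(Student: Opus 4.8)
The plan is to deduce the Corollary from the two facts just established: (a) $\Psi:(\mcB_\mfL,\curvearrowright)\to(T,D)$ is a pre-Lie morphism, hence lifts to a Hopf algebra morphism $\Psi:\rmU(\mcB_\mfL,\curvearrowright)\to\rmU(T,D)$ by the universality property of universal enveloping algebras, compatibly with the Guin--Oudom bases on both sides; and (b) by Proposition \ref{prop:tra01}, the transpose of the concatenation product on $\rmU(\mcB_\mfL,\curvearrowright)$ (with respect to the Guin--Oudom basis and the $\sigma$-twisted pairing \eqref{tree04}) is the Connes--Kreimer coproduct on $\R[\mcH_\mfL]$, while the transpose of the concatenation product on $\rmU(T,D)$ (with respect to its Guin--Oudom basis) is $\Delta_D$ on $\R[\populated]$. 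Since transposing a bialgebra morphism between graded connected bialgebras (with finite-dimensional graded pieces, so that duality is well-behaved) yields a bialgebra morphism in the opposite direction, and bialgebra morphisms between connected graded bialgebras are automatically Hopf algebra morphisms (they intertwine the antipodes, which are determined recursively), it follows that $\Psi^\dagger:(\R[\populated],\Delta_D)\to(\R[\mcH_\mfL],\Delta_\curvearrowright)$ is a Hopf algebra morphism onto the Connes--Kreimer Hopf algebra.

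Concretely, I would proceed in the following steps. First, fix the pairings: on the tree side use $\langle\,\cdot\,,\,\cdot\,\rangle_\sigma$ of \eqref{tree04}, under which the Guin--Oudom basis $\{\mathsf{T}_I\}$ of $\rmU(\mcB_\mfL,\curvearrowright)$ is dual to $\{\tau^I\}$ in $\R[\mcH_\mfL]$ rescaled by the symmetry factors; on the multi-index side use \eqref{dua02}. Second, record that $\Psi$ respects the gradings ($N(\tau)=\length\beta$ whenever $\tau\in\rmT_\beta$, from the Remark following Lemma \ref{lem::tree1}), so $\Psi$ is a morphism of connected graded Hopf algebras with finite-dimensional homogeneous components, and transposition is legitimate degree by degree. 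Third, spell out the duality: for $U,U'\in\rmU(\mcB_\mfL,\curvearrowright)$ and a tree-monomial $\tau^I$, the identity $\langle\Psi(U)\Psi(U'),\,\text{(dual basis)}\rangle=\langle\Psi(UU'),\,\text{(dual basis)}\rangle$ (using that $\Psi$ is an algebra morphism for concatenation) dualizes to $\Delta_\curvearrowright\circ\Psi^\dagger=(\Psi^\dagger\otimes\Psi^\dagger)\circ\Delta_D$, and $\varepsilon\circ\Psi^\dagger=\varepsilon$ comes from $\Psi$ preserving the unit; that $\Psi^\dagger$ is multiplicative for the commutative products follows dually from $\Psi$ being a coalgebra morphism, i.e. from \eqref{go11} holding compatibly on both sides (equivalently, $\Psi$ maps the Guin--Oudom basis element $\mathsf{T}_I$ to a sum of $\mathsf{Z}_{I'}$'s, which it does because $\Psi[\tau]$ is, by \eqref{tree05}, a monomial in the $\z_{(\mfl,k)}$). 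Fourth, invoke the standard fact \cite[Theorem 2.1.5]{Abe} (or the connectedness argument already used in Proposition \ref{prop:tra01}) that a bialgebra morphism between connected graded Hopf algebras automatically commutes with the antipodes, so $\Psi^\dagger$ is a Hopf algebra morphism. Finally, identify the target: by the discussion preceding Lemma \ref{lem:5.1}, $(\R[\mcH_\mfL],\Delta_\curvearrowright)$ with the pairing \eqref{tree04} \emph{is} the Connes--Kreimer Hopf algebra (via the Grossman--Larson/Connes--Kreimer duality of \cite{Hoffman}), which gives the claim.

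The only genuinely delicate point — and the one I would flag as the main obstacle — is bookkeeping the symmetry factors $\sigma(\tau)$ so that the $\sigma$-twisted pairing \eqref{tree04} is exactly the one making the concatenation product on $\rmU(\mcB_\mfL,\curvearrowright)$ dual to the Connes--Kreimer coproduct, and simultaneously making $\Psi^\dagger$ compatible with $\Psi$. In practice this means checking that the rescaling by $I!$ in the definition \eqref{go30} of the Guin--Oudom basis, the symmetry-factor recursion \eqref{brp52}, and the factor $k!$ appearing in \eqref{tree05} all fit together; this is where the footnote choice of pairing (following \cite[Section 4]{BCCH}) does its work. Everything else is formal: once the pairings are correctly matched, the statement is just "transpose of a graded connected Hopf algebra morphism is a Hopf algebra morphism," which needs no computation beyond what Proposition \ref{prop:tra01} and Lemma \ref{lem:2.6} already provide.
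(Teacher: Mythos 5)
Your proposal is correct and follows essentially the same route as the paper, which simply lifts the pre-Lie morphism $\Psi$ to a Hopf algebra morphism $\rmU(\mcB_\mfl,\curvearrowright)\to\rmU(T,D)$ via universality and then transposes, invoking the Grossman--Larson/Connes--Kreimer duality to identify the target. The additional detail you supply (grading compatibility, finite-dimensional homogeneous components, automatic antipode compatibility for connected graded bialgebras, and the bookkeeping of symmetry factors in the pairing \eqref{tree04}) is exactly what the paper leaves implicit, and it is all sound.
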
	

\medskip

Regarding the translations \eqref{tree07}, the following lemma is the analogue of \cite[Lemma 6.6]{LOT}.
\begin{lemma}
	Whenever the r.~h.~s. makes sense,
	\begin{align}\label{tree06}
		\Psi M_\mathbf{v} = \rhotil_\trineg(\exp_{\trineg}(\Psi[\mathbf{v}])) \Psi,
	\end{align} 
	where $\Psi[\mathbf{v}] := \{\Psi[v_\mfl]\}_{\mfl\in\mfL}\subset T^*$ is identified with an element in $R^*$ as in \eqref{tra31} and \eqref{tra32}.
\end{lemma}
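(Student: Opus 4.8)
The plan is to prove the identity \eqref{tree06} by the standard strategy for comparing two algebra morphisms: first reduce it to an identity on the generators $\Xi_\mfl$, and then extend it along the recursive/pre-Lie structure. Both sides are maps $\mcB_\mfL^* \to T^*$ (interpreting them as algebra morphisms on the enveloping algebras wherever that makes sense), so by the universality property it suffices to check that they agree on the primitive elements, i.e. on trees, and that they are compatible with the grafting recursion that builds all trees from the $\Xi_\mfl$. Concretely, I would argue by induction on the number of nodes $N(\tau)$.

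First I would treat the base case. For $\tau = \Xi_\mfl$ we compute the left-hand side: $M_\mathbf{v}\Xi_\mfl = \Xi_\mfl + v_\mfl$ by \eqref{tree07}, so $\Psi M_\mathbf{v}\Xi_\mfl = \Psi[\Xi_\mfl] + \Psi[v_\mfl] = \z_{(\mfl,0)} + \Psi[v_\mfl]$. For the right-hand side, $\rhotil_\trineg(\exp_\trineg(\Psi[\mathbf{v}]))\Psi[\Xi_\mfl] = \rhotil_\trineg(\exp_\trineg(\mathbf{c}))\z_{(\mfl,0)}$ with $\mathbf{c} = \Psi[\mathbf{v}]$, and by \eqref{tra21} (equivalently \eqref{pre02bis} exponentiated) this equals $\z_{(\mfl,0)} + c_\mfl = \z_{(\mfl,0)} + \Psi[v_\mfl]$, using \eqref{tra32} and the identification of $\Psi[\mathbf{v}]$ with an element of $R^*$. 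So the two sides agree on generators.

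For the induction step, take $\tau$ of the form \eqref{tree03}, $\tau = \Xi_\mfl \prod_{j=1}^k \mcI(\tau_j)$, and assume \eqref{tree06} holds for each $\tau_j$ and, more generally, for all trees with fewer nodes. The key observation is that $\tau$ can be produced from smaller trees by iterated grafting: writing $I = \sum_{j=1}^k e_{\tau_j}$, one has $\tfrac{1}{I!}$ times the simultaneous grafting of $\tau_1,\dots,\tau_k$ onto $\Xi_\mfl$ realized as $\rho_\curvearrowright(\tfrac{1}{I!}\tau_1\pprod_\curvearrowright\cdots\pprod_\curvearrowright\tau_k)\Xi_\mfl$ (this is exactly the mechanism used in the proof of Lemma \ref{lem:5.1}). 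Now I would push $\Psi M_\mathbf{v}$ through this using that $M_\mathbf{v}$ is a pre-Lie morphism for $\curvearrowright$ and $\Psi$ is a pre-Lie morphism \eqref{tree02}, so that $\Psi M_\mathbf{v}$ intertwines grafting with the $D$-pre-Lie product on $T$; on the other hand $\rhotil_\trineg(\exp_\trineg(\mathbf{c}))$ is a pre-Lie morphism of $(T,D)$ by \eqref{tra22}. Concretely, by \eqref{tree05} we have $\Psi[\tau] = k!\,\z_{(\mfl,k)}\prod_j \Psi[\tau_j]$; applying $\rhotil_\trineg(\exp_\trineg(\mathbf{c}))$ and using \eqref{tra22} iteratively (together with \eqref{goD}, which expresses the action of the Guin–Oudom elements on $T$ as $\z$-multiplication composed with powers of $D$) lets one commute the translation past the multiplication and the implicit $D$'s, turning it into $k!$ times $\big(\rhotil_\trineg(\exp_\trineg(\mathbf{c}))\z_{(\mfl,0)}\big)$ with $k$ copies of $D$ acting, distributed by Leibniz onto the $\rhotil_\trineg(\exp_\trineg(\mathbf{c}))\Psi[\tau_j]$. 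Matching this against the grafting recursion for $M_\mathbf{v}\tau$ (which also distributes over the $\tau_j$ and produces an extra $\Xi_\mfl\mcI(v_\mfl)$ type term corresponding to $\rhotil_\trineg(\exp_\trineg(\mathbf{c}))\z_{(\mfl,0)} = \z_{(\mfl,0)}+c_\mfl$), and invoking the induction hypothesis $\Psi M_\mathbf{v}\tau_j = \rhotil_\trineg(\exp_\trineg(\mathbf{c}))\Psi[\tau_j]$, closes the induction.

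The main obstacle I anticipate is organizing the combinatorics of the induction step cleanly: both sides involve multi-grafting / Leibniz expansions with symmetry factors, and one must be careful that the $k!$ and $I!$ normalizations in \eqref{tree05}, \eqref{brp52} match the ones coming from the Guin–Oudom basis \eqref{go30} and from \eqref{goD}. A cleaner route that sidesteps some of this bookkeeping is to not induct tree-by-tree but instead to verify directly that both $\Psi M_\mathbf{v}$ and $\rhotil_\trineg(\exp_\trineg(\Psi[\mathbf{v}]))\Psi$, viewed as maps $\rmU(\mcB_\mfL,\curvearrowright) \to \rmU(T,D)$, are algebra morphisms agreeing on the pre-Lie generators and compatible with the respective $\rho$-actions — the compatibility with the $\rho$'s is precisely the content of \eqref{ren50} (with $\bff = 0$ this gives intertwining of $\rho_D$ with $M_\mathbf{c}$), so the statement becomes a formal consequence of the universality of $\rmU$ once the generator identity is checked. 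I would likely present the short universality argument as the main proof and relegate the recursive verification to a remark. The caveat ``whenever the r.h.s. makes sense'' is handled by restricting, as in Subsection \ref{subsec::tran02}, to $\mathbf{v}$ such that $\Psi[\mathbf{v}]$ lies in $\Rentwo^*$ or $R_\subL^*$ (e.g. $\mathbf{v} \in \mcB_\mfL^*$ supported on trees with at least two nodes), so that Lemma \ref{cor:tra01} or Lemma \ref{cor:tra02} guarantees $\rhotil_\trineg(\exp_\trineg(\Psi[\mathbf{v}])) \in \textnormal{End}(T^*)$.
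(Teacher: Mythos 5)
Your proposal is correct and, in its ``cleaner route,'' is essentially the paper's proof: both sides are pre-Lie morphisms out of the free pre-Lie algebra $(\mcB_\mfL,\curvearrowright)$ (using \eqref{tree02}, \eqref{tra22} and the defining pre-Lie morphism property of $M_\mathbf{v}$), so it suffices to check the identity on the generators $\Xi_\mfl$, which is exactly your base-case computation via \eqref{tree07} and \eqref{tra21}. The detailed node-by-node induction you sketch first is just a by-hand version of this freeness reduction (and your aside that \eqref{ren50} with $\bff=0$ yields the needed intertwining is off, since that case is trivial), but the argument as a whole goes through.
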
	
\begin{proof}
	Since our exponential maps $\rhotil_\trineg(\exp_{\trineg}(\mathbf{c}))$, whenever well-defined, are pre-Lie morphisms with respect to $D$, cf. \eqref{tra22}, and since $\Psi$ is also a pre-Lie morphism by \eqref{tree02}, both sides of the equality are pre-Lie morphisms over $(\mcB_\mfL,\curvearrowright)$. It is then enough to show \eqref{tree06} when applied to elements of the form $\Xi_\mfl$. In such a case,
	\begin{align*}
		\Psi M_\mathbf{v} \Xi_\mfl = \Psi[\Xi_\mfl] + \Psi[v_\mfl] = \z_{(\mfl,0)} + \Psi[v_\mfl],
	\end{align*}	
	which coincides by \eqref{tra21} with
	\begin{align*}
		\rhotil_\trineg(\exp_{\trineg}(\Psi[\mathbf{v}])) \z_{(\mfl,0)} = \rhotil_\trineg(\exp_{\trineg}(\Psi[\mathbf{v}])) \Psi[\Xi_\mfl].
	\end{align*}
\end{proof}	
The cases for which \eqref{tree06} is meaningful covered in this paper can be identified with the following:
\begin{itemize}
	\item The set $\Rentwo^*$ corresponds in trees to translations of the form \eqref{tree07} for which the Lie series $\{v_\mfl\}_{\mfl\in \mfL}$ only contain trees with at least two nodes (equivalently, at least one edge).
	\item Given $\emptyset \neq \subL \subset \mfL$, the set $R_\subL^*$ corresponds in trees to translations \eqref{tree07} which leave $\{\Xi_\mfl\}_{\mfl\in\mfL\setminus\subL}$ invariant and such that $\{v_\mfl\}_{\mfl\in \mfL}$ only contains trees which have at least one node in $\mfL\setminus \subL$. The set $\hat{R}_\subL^*$ further restricts to Lie series $\{v_\mfl\}_{\mfl\in \mfL}$ $\subset$ $\mcB_{\mfL\setminus \subL}^*$.
	\item In particular, $R_\circ$ only shifts $\Xi_0$ by Lie series which cannot contain trees in $\mcH_{\{0\}}$, whereas $\hat{R}_\circ$ only allows trees in $\mcH_{\mfL\setminus \{0\}}$. The latter correspond to the transformations of the equation included in \cite[Theorem 38 (ii)]{BCFP}, as reflected in the hierarchy \eqref{ren20}.
\end{itemize}
\subsection{The insertion multi pre-Lie algebra}\label{subsec::5.2}
For every $\mfl\in\mfL$, consider the pre-Lie product $\triinsl : \mcB_\mfL \times \mcB_\mfL \to \mcB_\mfL$ given by
\begin{align}\label{ins01}
	\tau_1 \triinsl \tau_2 = \sum_\tau M_\mfl (\tau_1,\tau_2; \tau) \tau,
\end{align}
where $M_\mfl (\tau_1,\tau_2;\tau)$ counts the number of ways of inserting $\tau_1$ in a node of $\tau_2$ decorated by $\mfl$. Note that this is a generalization of the pre-Lie product defined in \cite[Subsection 4.4]{CEFM} in the undecorated setting and related to the substitution law of B-series \cite[(4)]{CHV}. Following \cite[(3.37)]{BM22}, we may describe this pre-Lie product in terms of simultaneous grafting. Recall from the proof of Lemma \ref{lem:5.1} that by \cite[Subsection 3.1]{GO2} simultaneous grafting is given in terms of $\rho_\curvearrowright$, so \cite[(3.37)]{BM22} implies that \eqref{ins01} may be rewritten as
\begin{align*}
	\tau' \triinsl \tau = \sum_{v\in N_{\tau;\mfl}} \big(\rho_\curvearrowright(\tau_1^{v}\pprod_\curvearrowright ... \pprod_\curvearrowright \tau_k^{v})\tau'\big) \curvearrowright_v (\tau\setminus \tau^v),
\end{align*}
where
\begin{itemize}
	\item the sum is over all nodes $v$ of $\tau$ decorated by $\mfl$;
	\item given a node $v$ we denote by $\tau^v$ the subtree of $\tau^v$ with $v$ as its root, and $\tau^v = \Xi_\mfl \prod_j \mcI(\tau_j^v)$;
	\item $\curvearrowright_v (\tau\setminus \tau^v)$ grafts exactly at the place where $v$ was, i.~e. grafts on $w$ where $w$ is such that there is an outgoing edge of $w$ which is the incoming edge of $v$.
\end{itemize}
This implies a recursive definition of $\triinsl$, namely
\begin{align}\label{ins03}
	\tau' \triinsl \Xi_{\mfl'} = \delta_\mfl^{\mfl'} \tau' 
\end{align}
and then recursively for $\tau$ as in \eqref{tree03}
\begin{align}\label{ins02}
	\tau' \triangleright_{{\rm ins},\mfl'} \tau = \delta_\mfl^{\mfl'} \rho_\curvearrowright (\tau_1 \pprod_\curvearrowright ... \pprod_\curvearrowright \tau_k) \tau' + \sum_{j=1}^k \Xi_\mfl \mcI (\sigma \triangleright_{{\rm ins},\mfl'} \tau_j) \prod_{i\neq j} \mcI (\tau_{i}).
\end{align}
\begin{lemma}
	$\Psi$ is a multi pre-Lie algebra morphism; in particular, for every $\mfl\in\mfL$,
	\begin{align}\label{ins07}
		\Psi [\tau \triinsl \tau'] = \Psi [\tau] \btl \Psi [\tau'].
	\end{align}
\end{lemma}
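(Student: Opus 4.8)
The plan is to prove \eqref{ins07} by induction on the tree $\tau'$, using the recursive descriptions \eqref{ins03}–\eqref{ins02} of $\triinsl$ on the tree side and \eqref{neg04} together with the pre-Lie morphism property \eqref{tree02} of $\Psi$ on the multi-index side. First I would record the tool that makes the right-hand side tractable: since $\Psi : (\mcB_\mfL,\curvearrowright) \to (T,D)$ is a pre-Lie morphism \eqref{tree02}, it lifts to an algebra morphism $\Psi : \rmU(\mcB_\mfL,\curvearrowright) \to \rmU(T,D)$, and by the defining compatibility of $\rho$ with $\pprod$ (see \eqref{go06} and the computation in the proof of Lemma \ref{lem:5.1}) we get $\Psi(\rho_\curvearrowright(\tau_1 \pprod_\curvearrowright \cdots \pprod_\curvearrowright \tau_k)\tau') = \rho_D(\Psi(\tau_1)\pprod_D \cdots \pprod_D \Psi(\tau_k))\Psi[\tau']$. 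This should let me translate the simultaneous-grafting terms in \eqref{ins02} directly into $D^k$-type actions via \eqref{goD}.

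The base case is $\tau' = \Xi_{\mfl'}$: by \eqref{ins03}, $\tau \triinsl \Xi_{\mfl'} = \delta_\mfl^{\mfl'}\tau$, so the left side of \eqref{ins07} is $\delta_\mfl^{\mfl'}\Psi[\tau]$; on the right, $\Psi[\Xi_{\mfl'}] = \z_{(\mfl',0)}$, and by the definition \eqref{neg04} of $\btl$ together with the formula $\rho_D(\z^\beta) = \z^\beta D$ restricted to $\partial_{(\mfl,k)}$ at $k=0$, one checks $\Psi[\tau]\btl \z_{(\mfl',0)} = \delta_\mfl^{\mfl'}\Psi[\tau]$ — essentially because $\partial_{(\mfl,0)}\z_{(\mfl',0)} = \delta_\mfl^{\mfl'}$ and all higher-$k$ terms in \eqref{neg04} annihilate the length-one monomial. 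For the inductive step, I would take $\tau$ as in \eqref{tree03}, $\tau = \Xi_{\mfl''}\prod_{j=1}^k \mcI(\tau_j)$, assume \eqref{ins07} holds for each $\tau_j$, apply $\Psi$ to \eqref{ins02}, and use the recursion \eqref{tree05} for $\Psi$ on products-of-subtrees together with the Leibniz rule for $D$ (which underlies the vector-field structure of $\btl$, cf. \eqref{neg04} and \eqref{neg05}). The first term of \eqref{ins02} becomes $\delta_\mfl^{\mfl'}$ times the simultaneous-grafting action, which by the tool above matches the "hit the root node $\Xi_{\mfl'}$, i.e. the $\partial_{(\mfl',k)}$ with the full fertility" part of $\Psi[\tau]\btl\Psi[\tau']$; the sum $\sum_j$ of recursively-inserted terms matches the "hit an internal node" part, using the induction hypothesis $\Psi[\sigma\triinsl\tau_j] = \Psi[\sigma]\btl\Psi[\tau_j]$ and the fact that $\btl$ acts as a derivation through the product $\prod_j \Psi[\tau_j]$ in \eqref{tree05}.

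The main obstacle I anticipate is bookkeeping the fertility/factorial constants: $\Psi$ carries the factors $k!$ from \eqref{tree05}, the simultaneous grafting $\rho_\curvearrowright(\tau_1\pprod_\curvearrowright\cdots\pprod_\curvearrowright\tau_k)$ hides symmetry factors, and $\btl$ in \eqref{neg04} involves $\tfrac{1}{k!}D^k$; I must verify these all cancel consistently, analogously to how the $\sigma(\beta)$ factors were tracked in Lemma \ref{lem::tree1} and Lemma \ref{lem:6.5}. Concretely, the point is that when $\tau^v = \Xi_{\mfl'}\prod_j \mcI(\tau_j^v)$ is the subtree at a node $v$ decorated by $\mfl$, inserting $\tau'$ there in the B-series substitution sense corresponds on multi-indices to the operator $\tfrac{1}{k!}(D^{k}\Psi[\tau'])\,\partial_{(\mfl,k)}$ acting on the monomial for $\tau$, and summing over all such $v$ reproduces $\sum_{k}(\tfrac{1}{k!}D^k\Psi[\tau'])\partial_{(\mfl,k)}$ applied to $\Psi[\tau]$, which is exactly $\Psi[\tau']\btl\Psi[\tau]$ read off from \eqref{neg04} (after the harmless swap of the two arguments dictated by the convention in \eqref{neg04} versus \eqref{ins01}). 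Once the constants are matched the induction closes; alternatively, if the direct bookkeeping is too heavy, one can instead verify \eqref{ins07} by pairing both sides against the monomials $\z^\beta$ and comparing with the already-established identities \eqref{tree01} and \eqref{brp57} relating branched and multi-index expansions, reducing the claim to a statement about counting insertions of subtrees that is combinatorially transparent.
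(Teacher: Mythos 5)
Your proposal follows essentially the same route as the paper's proof: induction via the recursive definition \eqref{ins03}--\eqref{ins02}, with the simultaneous-grafting terms translated through the extension \eqref{ins05} of the pre-Lie morphism property, the identity \eqref{goD}, and the Leibniz rule for $D$ to close the inductive step. The factorial bookkeeping you flag does work out exactly as you anticipate (the $k!$ from \eqref{tree05} cancels against the $\tfrac{1}{k!}D^k$ in \eqref{neg04}), so the argument is correct as outlined.
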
	
\begin{proof}
	Note that the pre-Lie morphism \eqref{tree02} extends to the property
	\begin{align}\label{ins05}
		\Psi [ 	\rho_\curvearrowright (\tau_1 \pprod_\curvearrowright ... \pprod_\curvearrowright \tau_k) \tau' ] = \rho_D \big( \Psi[\tau_1]\pprod_D ... \pprod_D \Psi[\tau_k]\big) \Psi[\tau'].
	\end{align}
	We use this and the recursive definition \eqref{ins03}, \eqref{ins02} to prove our claim. First we note by \eqref{ins03}
	\begin{align*}
		\Psi [\tau' \triinslp \Xi_\mfl] = \delta_\mfl^{\mfl'} \Psi [\tau'] = \Psi [\tau'] \trineg_{\mfl'} \z_{(\mfl,0)} = \Psi [\tau'] \trineg_{\mfl'} \Psi[\Xi_\mfl]. 
	\end{align*}	
	Now for a tree $\tau$ of the form \eqref{tree03} it follows from \eqref{ins02} and \eqref{ins05} combined with \eqref{tree05} that
	\begin{align*}
		\Psi [\tau' \triinslp \tau] 
		&= \delta_\mfl^{\mfl'}\rho_D (\Psi[\tau_1]\pprod_D...\pprod_D \Psi[\tau_k]) \Psi[\tau'] \\
		& \quad + \sum_{j=1}^k k! \z_{(\mfl,k)} \Psi[\tau'\triinslp \tau_j] \prod_{i\neq j} \Psi[\tau_{i}] \\
		& = \delta_\mfl^{\mfl'}\rho_D (\Psi[\tau_1]\pprod_D...\pprod_D \Psi[\tau_k]) \Psi[\tau'] \\
		& \quad + \sum_{j=1}^k k! \z_{(\mfl,k)} (\Psi[\tau']\trineg_{\mfl'} \Psi[\tau_j]) \prod_{i\neq j} \Psi[\tau_{i}].
	\end{align*}
	As a consequence of \eqref{goD},
	\begin{align*}
		\mbox{}&\delta_\mfl^{\mfl'}\rho_D (\Psi[\tau_1]\pprod_D...\pprod_D \Psi[\tau_k]) \Psi[\tau']\\
		&\quad =  \big(\big(\tfrac{1}{k!}\Psi[\tau_1]\cdots \Psi[\tau_k] D^k\big) \Psi[\tau'] \big) k!\partial_{\z_{(\mfl',k)}} \z_{(\mfl,k)}.
	\end{align*}
	The claim follows from \eqref{tree05} and the Leibniz rule.
\end{proof}	

\medskip

Let us now restrict our construction to the undecorated case, i.~e. when $\mfL$ consists of one single element, and therefore we can drop all decorations. In particular, $\renlie = \populated$. In this situation, \eqref{ins07} takes the form of the pre-Lie morphism
\begin{align}\label{ins06}
	\Psi[\tau \triins \tau'] = \Psi[\tau] \trineg \Psi[\tau'],
\end{align}
where $\triins$ is now precisely the pre-Lie product defined in \cite[Subsection 4.4]{CEFM} (and denoted there as $\triangleright_\sigma$). This is the pre-Lie structure associated to the rooted tree Hopf algebra \cite[Subsection 4.1]{CEFM}, i.~e. the algebra of forests of rooted trees \textit{with at least one edge} equipped with the extraction-contraction coproduct. We denote by $\mcH_{\geq 2}$ the set of these trees, and $\mcB_{\geq 2}$ the vector space generated by them. Thanks to Lemma \ref{lem::tree1}, trees with at least one edge are those for which $\length{\beta} \geq 2$. This connects to the pre-Lie algebra $(\Rentwo,\trineg)$ of Subsection \ref{subsec::tran02}, now identified with $(T_{\geq 2},\trineg)$, and to which we applied the Guin-Oudom procedure (Subsection \ref{subsec::2.1}) and transposition (Subsection \ref{subsec::2.2}). By the universality property, \eqref{ins06} lifts $\Psi$ to a Hopf algebra morphism
\begin{align*}
	\Psi : \rmU(\mcB_{\geq 2},\triins) \longrightarrow \rmU(T_{\geq 2}, \trineg).
\end{align*} 
The transposition allows us to conclude the following.
\begin{corollary}\label{cor:hopf02}
	Let $\Psi^\dagger$ be the transposition of $\Psi$ with respect to the pairing \eqref{tree04}. Then $\Psi^\dagger$ is a Hopf algebra morphism from $(\R[\populated_{\geq 2}], \Delta_\trineg)$ to the rooted tree Hopf algebra of \cite[Subsection 4.1]{CEFM}.
\end{corollary}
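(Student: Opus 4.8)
The plan is to deduce Corollary \ref{cor:hopf02} by dualizing the Hopf algebra morphism $\Psi : \rmU(\mcB_{\geq 2},\triins) \longrightarrow \rmU(T_{\geq 2},\trineg)$ obtained just above it, exactly as the tree-vs-multi-index corollary following Lemma \eqref{tree02} was deduced. First I would check that the ambient structures are in place: by the undecorated specialization of Lemma \ref{lem::tree1}, trees with at least one edge correspond precisely to $\beta\in\populated$ with $\length\beta\geq 2$, so $(T_{\geq 2},\trineg)$ is identified with $(\Rentwo,\trineg)$ from Subsection \ref{subsec::tran02}, which is a $1$-connected graded pre-Lie algebra satisfying the finiteness property \eqref{fin01}; likewise $(\mcB_{\geq 2},\triins)$ is $1$-connected graded (under the number-of-nodes grading minus one) and satisfies \eqref{fin01}. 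Hence Proposition \ref{prop:tra01} applies to both, yielding connected graded Hopf algebras $(\R[\populated_{\geq 2}],\Delta_\trineg)$ and $(\R[\mcH_{\geq 2}],\Delta_\triins)$, the latter being by \cite[Subsection 4.1]{CEFM} the rooted tree Hopf algebra with the extraction-contraction coproduct (once we use the pairing \eqref{tree04}).

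Next I would transpose $\Psi$ with respect to the pairings: on the multi-index side the canonical pairing \eqref{dua02} over the Guin-Oudom basis, on the tree side the $\sigma$-weighted pairing \eqref{tree04}. The transpose $\Psi^\dagger : \R[\populated_{\geq 2}] \to \R[\mcH_{\geq 2}]$ is well-defined because $\Psi$ is degree-preserving and each graded piece is finite-dimensional (so each matrix entry of $\Psi^\dagger$ is finite and the sum defining $\Psi^\dagger$ of a monomial is finite). Since transposition turns products into coproducts and coproducts into products, and since $\Psi : \rmU(\mcB_{\geq 2},\triins)\to\rmU(T_{\geq 2},\trineg)$ is a morphism of Hopf algebras (algebra morphism by the universality extension of the pre-Lie morphism \eqref{ins06}, coalgebra morphism because it sends primitives to primitives, hence grouplikes to grouplikes), its transpose is automatically a bialgebra morphism between the dual bialgebras $(\R[\populated_{\geq 2}],\cdot,\Delta_\trineg)$ and $(\R[\mcH_{\geq 2}],\cdot,\Delta_\triins)$. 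Both being connected graded, it is a Hopf algebra morphism, and by the duality between Grossman-Larson and Connes-Kreimer type pairings recalled via \cite{Hoffman} together with the identification from \cite[Subsection 4.1]{CEFM}, the target is exactly the rooted tree Hopf algebra. This gives the statement.

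The one genuinely delicate point, which I would spell out carefully, is that transposition is compatible with the chosen non-canonical pairing \eqref{tree04}: the product dual to $\Delta_\triins$ under \eqref{tree04} must be the commutative polynomial product on $\R[\mcH_{\geq 2}]$, and the coproduct $\Delta_\triins$ dual to the concatenation (= simultaneous grafting, here simultaneous insertion) product on $\rmU(\mcB_{\geq 2},\triins)$ under \eqref{tree04} must be the extraction-contraction coproduct. This is precisely the content identified in \cite[Subsection 4.1]{CEFM} (with the $\sigma$-normalization matching the appearance of $\sigma$ in \eqref{tree04}), so I would invoke it rather than reprove it; the main obstacle is just bookkeeping to confirm that our sign/normalization conventions match theirs, after which the corollary is immediate. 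I do not expect any new analytic or combinatorial difficulty beyond what is already packaged in Proposition \ref{prop:tra01}, Lemma \ref{lem::tree1} and the morphism \eqref{ins06}.
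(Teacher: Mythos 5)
Your proposal is correct and follows essentially the same route as the paper: lift the multi pre-Lie morphism \eqref{ins06} to a Hopf algebra morphism $\rmU(\mcB_{\geq 2},\triins)\to\rmU(T_{\geq 2},\trineg)$ by universality, then transpose with respect to the pairing \eqref{tree04}, invoking \cite[Subsection 4.1]{CEFM} for the identification of the dual with the extraction-contraction Hopf algebra. The extra care you take with the gradings, finiteness properties and the $\sigma$-normalization is exactly the bookkeeping the paper leaves implicit.
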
	

\medskip

\end{document}